\renewcommand{\subsectionmark}[1]{}
\newtheorem{theorem}{Theorem}[section]
\newtheorem{lemma}[theorem]{Lemma}
\newtheorem{proposition}[theorem]{Proposition}
\newtheorem{corollary}[theorem]{Corollary}
\newtheorem{remark}[theorem]{Remark}
\newtheorem{question}[theorem]{Question}
\renewcommand{\P}{\mathbb{P}}
\newcommand{\TV}[1]{{\lVert #1 \rVert}_{\normalfont
\text{TV}}}
\newcommand{\N}{\mathbb{N}}
\newcommand{\Z}{\mathbb{Z}}
\newcommand{\R}{\mathbb{R}}
\newcommand{\E}{\mathbb{E}}
\newcommand{\Var}{\textup{Var}}
\title{Cutoff on trees is rare}
\author{Nina Gantert$^{\ast}$, Evita Nestoridi$^{\ast\ast}$ and Dominik Schmid$^{\ast}$}
\date{\today}
\begin{document}


\vspace{-0.3cm}

\maketitle

\begin{abstract} We study  the simple random walk on trees and give estimates on the mixing and relaxation time. Relying on a recent characterization by Basu, Hermon and Peres, we give geometric criteria, which are easy to verify and allow to determine whether the cutoff phenomenon occurs. We thoroughly discuss families of trees with cutoff, and show how our criteria can be used to prove the absence of cutoff for several classes of trees, including spherically symmetric trees, Galton--Watson trees of a fixed height, and sequences of random trees converging to the Brownian CRT.
\end{abstract}


\phantom{.} \hspace{0.2cm}\textbf{Keywords:} Random walk, spectral gap,  mixing times, cutoff phenomenon \\
\phantom{.} \hspace{0.55cm} \textbf{MSC 2020:}  60J27, 60K37 

\let\thefootnote\relax
\footnotetext{ $^\ast$ \textit{TU Munich, Germany. E-Mail}: \nolinkurl{nina.gantert@tum.de}, \nolinkurl{dominik.schmid@tum.de} \\\phantom{.} \hspace{0.3cm}
$^{\ast\ast}$ \textit{Princeton University, United States. E-Mail}: \nolinkurl{exn@princeton.edu}  }

\section{Introduction and main results}\label{sec:Introduction}

In recent breakthroughs, random walks on various families of random graphs were shown to exhibit the  cutoff phenomenon, which is a sharp transition in the convergence to equilibrium \cite{BS:CutoffNonBack,BLPS:CutoffErdos,LS:CutoffRegular}.
Motivated by these results, we address the question for which families of (random) trees, the simple random walk has cutoff. In particular, we investigate the simple random walk on Galton--Watson trees, when the offspring distribution $\mu$ has mean $m \geq 1$ and finite variance $\sigma^2 \in (0,\infty)$. Moreover, we study the simple random walk on spherically symmetric trees. Our results on cutoff for these families of trees are summarized as follows.
 \begin{theorem}\label{thm:CutoffExamples} The simple random walk does (almost surely) not exhibit cutoff if the underlying trees $(G_n)_{n\geq 1}$ come from one of the following three constructions:
 \begin{itemize}
 \item  A supercritical Galton--Watson tree ($m>1$) conditioned on survival and truncated at generation $n$.
 \item  A family of  Galton-Watson trees conditioned on having $n$ sites.
 \item  A spherically symmetric tree of bounded degree truncated at generation $n$.
 \end{itemize}
 \end{theorem}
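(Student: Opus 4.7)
The plan is to apply, for each of the three cases, the geometric criterion for cutoff on trees that the paper extracts from the Basu--Hermon--Peres characterization. Informally, this criterion (whose precise form is stated later in the paper) says that cutoff is absent whenever the walker's hitting time of a typical target, started from stationarity, fails to concentrate on the mixing-time scale; equivalently, whenever the tree possesses macroscopic subtrees at or near the root in which the walk commits for durations comparable to $t_{\mathrm{mix}}$. Since the product condition $t_{\mathrm{mix}}/t_{\mathrm{rel}}\to\infty$ does hold generically on these trees, the argument is inherently more refined than a gap vs.\ mixing comparison and must exploit the tree structure directly.

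For the supercritical Galton--Watson tree conditioned on survival and truncated at generation $n$, my plan is to use Kesten--Stigum: almost surely on survival, at least two children of the root each root a subtree carrying a positive fraction of the generation-$n$ population. Since $\pi$ is concentrated on the leaves, a walker started from stationarity stays inside one such subtree for a time of order $t_{\mathrm{mix}}$, because moving against the outward drift up to the root is exponentially slow in the depth, and so is the relaxation time of the subtree. This yields a non-degenerate, essentially bimodal hitting-time distribution for a typical leaf, violating the concentration required for cutoff. The argument is made quantitative by combining branching-process concentration with the effective-resistance estimates of Lyons for SRW on Galton--Watson trees.

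For a Galton--Watson tree conditioned on having $n$ sites, the same mechanism is invoked, but now the two macroscopic subtrees come from the convergence of the rescaled tree to the Brownian CRT: with probability bounded away from $0$ uniformly in $n$, the tree splits near the root into two subtrees of comparable mass and diameter $\Theta(\sqrt{n})$. For a spherically symmetric tree of bounded degree truncated at generation $n$, an explicit argument suffices: the shallowest vertex with $\geq 2$ children starts two isomorphic subtrees, and a short computation, modelled on the regular $d$-ary case, shows that transitions between them occur on the same time scale as the mixing time, forcing the same obstruction. The degenerate purely path-like case is handled separately by the classical bounds for SRW on a path.

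The main obstacle I expect is the almost-sure upgrade in the two Galton--Watson cases: the geometric criterion must be verified not in expectation but for almost every realization of the tree, which requires pairing the mixing-time and hitting-time estimates with concentration results on the branching process (Kesten--Stigum in the first case, contour-function invariance à la Aldous and Le Gall in the second). A secondary subtlety is ruling out pathological realizations in which the two macroscopic subtrees are so different in size that the hitting-time distribution, while still non-Gaussian, happens to concentrate at a single mode; this is handled by choosing the target leaf randomly according to $\pi$ rather than adversarially.
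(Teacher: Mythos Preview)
Your proposal rests on a false premise: you assert that the product condition $t^n_{\textup{mix}}/t^n_{\textup{rel}}\to\infty$ ``does hold generically on these trees,'' and that therefore a more refined hitting-time non-concentration argument is required. In fact the paper proves exactly the opposite. By the Basu--Hermon--Peres result (stated in the paper as Lemma~\ref{lem:ProductCriterion}), for simple random walk on trees the product criterion is \emph{equivalent} to cutoff, so your plan to exhibit non-concentration of hitting times while the product criterion holds is internally inconsistent: on trees, if the product criterion holds, cutoff \emph{does} occur.

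The paper's actual route in all three cases is to show that $t^n_{\textup{rel}}\asymp t^n_{\textup{mix}}$ and invoke Lemma~\ref{lem:ProductCriterion}. For spherically symmetric trees, the mixing time is controlled via the hitting-time bounds of Propositions~\ref{pro:HittingtimeUpper}--\ref{pro:HittingtimeLower}, and a matching lower bound on $t^n_{\textup{rel}}$ is obtained by lifting an eigenfunction from an associated birth-and-death chain and applying the Chen--Saloff-Coste spectral estimates. For the supercritical Galton--Watson tree truncated at height $n$, Jonasson's result gives $t^n_{\textup{rel}}\asymp m^n$ almost surely, and the paper proves $t^n_{\textup{mix}}\lesssim m^n$ almost surely via Kesten--Stigum and the hitting-time bound of Proposition~\ref{pro:HittingtimeUpper}. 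For trees converging to the CRT (including size-conditioned Galton--Watson trees), contour-function convergence to the Brownian excursion is used to show that with probability close to $1$ both $t^n_{\textup{mix}}\lesssim n^{3/2}$ (via the diameter bound \eqref{mixdiam}) and $t^n_{\textup{rel}}\gtrsim n^{3/2}$ (via Proposition~\ref{pro:LowerBoundRelaxationTime}, locating an edge $e$ with $|e|\,|T_e|\gtrsim n^{3/2}$); a short argument then upgrades this to almost-sure failure of the product criterion. Your intuition about macroscopic subtrees is not irrelevant---it is precisely what drives the lower bound on $t^n_{\textup{rel}}$---but it enters through the spectral gap, not through a separate hitting-time bimodality analysis.
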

 We refer to Section \ref{sec:CutoffAtypical} for precise formal definitions and statements. Intuitively, Theorem \ref{thm:CutoffExamples} is due to the fact that Galton--Watson trees are typically \textit{short and fat}, as shown by Addario-Berry in \cite{AB:ShortAndFat}, while spherically symmetric trees are \textit{balanced} in the number of sites by construction. We provide criteria showing that trees are typically \textit{long}, \textit{thin} and \textit{imbalanced} when leading to cutoff, see Theorems \ref{thm:NoCutoff} and \ref{thm:CutoffMiclo}. Our arguments rely on a characterization of the cutoff phenomenon on trees that was recently proven by Basu, Hermon and Peres \cite{BHP:CutoffTrees}. 
Moreover, we give an explicit construction for trees such that cutoff occurs, see Theorem \ref{thm:CutoffCompactification}. This allows us to give a simple example of trees for which the simple  random walk exhibits cutoff, see Corollary \ref{cor:NewTreesWithCutoff}. Previously, a first example of trees with cutoff was obtained in \cite{PS:CutoffTree}. \\

We now give a brief introduction to the theory of mixing times and refer to \cite{LPW:markov-mixing} for a more comprehensive treatment.
Let $G=(V,E,o)$ be a finite, rooted graph. Let $(\eta_t)_{t \geq 0}$ denote the (continuous-time) simple random walk on $G$ in the variable speed model, i.e.\ the walker moves along every edge of the tree at rate $1$. For two probability measures $\mu,\nu$ on $V$, let their \textbf{total-variation distance} be
\begin{equation}\label{def:TVdistance}
\TV{\mu-\nu} := \max_{A \subseteq V} \left|\mu(A)-\nu(A)\right| \ .
\end{equation} The $\boldsymbol{\varepsilon}$\textbf{-mixing time} of $(\eta_t)_{t \geq 0}$ is now given, for $\varepsilon \in (0,1)$, by
\begin{equation}\label{def:MixingTime}
t_{\textup{mix}}(\varepsilon) := \inf \left\{ t \geq 0 \colon \max_{x \in V} \TV{\P(\eta_t \in \cdot | \eta_0=x) - \pi }\leq \varepsilon \right\}
\end{equation}
where $\pi$ is the uniform distribution on $V$. For a sequence of graphs $(G_n)_{n \geq 1}$, let $(t^n_{\textup{mix}}(\varepsilon))_{n \geq 1}$ denote the collection of mixing times of the random walks on $(G_n)$. We say that the family of random walks on $(G_n)$ exhibits \textbf{cutoff} if for any $\varepsilon \in (0,1)$
\begin{equation}\label{def:Cutoff}
\lim_{n \rightarrow \infty} \frac{t^n_{\textup{mix}}(1-\varepsilon)}{t^n_{\textup{mix}}(\varepsilon)} = 1 \ .
\end{equation} 
The cutoff phenomenon was first verified in \cite{DS:SpectrumCompleteGraph}, and obtained its name in the seminal paper of Aldous and Diaconis \cite{AD:Shuffling}. Ever since there has been a lot of activity towards showing that specific examples of Markov chains exhibit cutoff, see for example \cite{BS:CutoffNonBack,BLPS:CutoffErdos,LS:CutoffRegular}.  The ultimate goal is to produce a necessary and sufficient condition for cutoff. In turns out that 
cutoff has deep connections to the spectral properties of the underlying random walk, see the results  \cite{EHP:Hardy,LMW:SpectralGapTrees,M:SpectrumMarkovChains,M:EigenfunctionsOnTrees} of Miclo and others for a discussion of the spectrum of random walks on trees, and the work of Jonasson \cite{J:MixingInterchange} for a collection of results on the spectrum of random walks on random graphs. \\

 It was conjectured by Peres that a necessary and sufficient condition for cutoff is that for some $\varepsilon \in (0,1)$ (or, equivalently, for all $\varepsilon \in (0,1)$)
\begin{equation}\label{eq:ProductCriterionIntro}
\lim_{n \rightarrow \infty} t_{\text{mix}}^n(\varepsilon) \lambda_n = \infty  \ ,
\end{equation} where $\lambda_n$ denotes the spectral gap of the random walk on $G_n$, see also Section \ref{sec:PrelimSpectral} for a formal definition. However, while this product criterion is indeed necessary, Aldous showed that it is not sufficient, see Chapter 18 of \cite{LPW:markov-mixing}. Nevertheless, it is believed  to be sufficient for a wide range of families of Markov chains. \\

\vspace{-0.5mm}For birth-and-death chains, it was shown by Ding, Lubetzky and Peres that the product criterion is sharp \cite{DLP:BirthDeath}. Moreover, Chen and Saloff-Coste provide simple conditions such that \eqref{eq:ProductCriterionIntro} holds \cite{CS:SpectrumBirthDeath,CS:CutoffTimes}. Using their results, we will be able to exclude cutoff on spherically symmetric trees in Section \ref{sec:Spherically}, following a projection argument due to Nestoridi and Nguyen for $d$--regular trees \cite{NN:SpectrumTree}. Recently, it was also verified for the simple random walk on trees by Basu, Hermon and Peres  \cite{BHP:CutoffTrees} that the product criterion is sufficient to see cutoff, see also Lemma \ref{lem:ProductCriterion} for a formal statement of their result. A first example of a family of trees with cutoff was found by Peres and Sousi \cite{PS:CutoffTree}. Note that although their results are stated for a discrete-time model of the simple random walk, they can be easily converted to the continuous-time setup using \cite{CS:DiscreteVSContinuous}. We will re-obtain the result of Peres and Sousi using Theorem \ref{thm:CutoffMiclo}, see Corollary \ref{cor:CutoffTree}, and Corollary \ref{cor:NewTreesWithCutoff} for simplified example of a family of trees with cutoff. \\

\vspace{-0.5mm}In the following, we will focus on random walks on finite trees and assume that the underlying graphs will always be a sequence of rooted trees. 
Before we come to the main theorems, we introduce some notations. We let $\ell(v)$ denote the set of edges in the shortest path of a vertex $v \in V$ from the root $o$ and write $|v|:=|\ell(v)|$. For every edge $e\in E$, we let $e_{-},e_{+} \in V$ with $|e_{-}|<|e_{+}|$ denote the endpoints of the edge and set $|e|:=|e_{+}|$.
Further, let $T_v$ be the largest subtree of $G$ rooted at $v$ which consists only of sites with distance at least $|v|$ from $o$, and we use the conventions that $T_e:=T_{e_{+}}$ and $|T_e|:= |V(T_{e_+})|$ for all $e\in E$. We say that a vertex $x$ is a $\boldsymbol{\delta}$\textbf{-center of mass} if there are two trees $T$ and $\tilde{T}$ of $G$ with $V(T) \cap V(\tilde{T}) = \{x\}$ and $V(T) \cup V(\tilde{T}) = V(G)$ such that 
\begin{equation}\label{def:CenterOfMass}
|V(T)|\geq \delta|V(G)| ,\qquad |V(\tilde{T})|\geq\delta |V(G)| \ .
\end{equation}  The existence of a $\delta$-center of mass is guaranteed for all $\delta \in [0,\frac{1}{3}]$ by the following result. For its proof, we refer to the appendix.
\begin{proposition}\label{pro:CenterOfMass}
For all trees $G$, there must be at least one vertex which is a $\frac{1}{3}$-center of mass. 
\end{proposition}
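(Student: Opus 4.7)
The plan is to use the classical \emph{centroid} of a tree. Writing $n := |V(G)|$, recall that every finite tree $G$ admits at least one vertex $x$ -- called a centroid -- such that each connected component of $G \setminus \{x\}$ contains at most $n/2$ vertices; this is a theorem of Jordan, verified by moving from any starting vertex to the neighbor across the currently largest incident subtree until no improving move remains (the size of the largest resulting component is strictly decreasing along this walk, so the procedure terminates).

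I would fix such a centroid $x$, let $C_1, \ldots, C_k$ be the components of $G \setminus \{x\}$, and set $s_i := |C_i|$, so that $s_i \leq n/2$ and $\sum_i s_i = n - 1$. Any partition $\{1, \ldots, k\} = A \sqcup B$ yields two subtrees $T := \{x\} \cup \bigcup_{i \in A} C_i$ and $\tilde T := \{x\} \cup \bigcup_{i \in B} C_i$ meeting in $\{x\}$ and covering $V(G)$; since each $C_i$ is joined to $x$ by exactly one edge (else $G$ would contain a cycle), both induced subgraphs are connected and hence trees. It therefore suffices to produce a partition satisfying $|V(T)| = 1 + \sum_{i \in A} s_i \geq n/3$ and the symmetric bound for $\tilde T$.

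The main step is exhibiting this balanced partition, which I would handle in two cases. If some $s_{i_0} \geq n/3$, take $A = \{i_0\}$; then $|V(T)| \geq n/3$ by construction, while the centroid bound $s_{i_0} \leq n/2$ forces $|V(\tilde T)| = n - s_{i_0} \geq n/2 \geq n/3$. Otherwise every $s_i < n/3$, in which case I add indices to $A$ one at a time in any order, stopping as soon as $\sum_{i \in A} s_i \geq n/3 - 1$; since each increment is strictly less than $n/3$, the cumulative sum at the stopping time lies in $[n/3 - 1,\, 2n/3 - 1)$, and the complementary sum therefore satisfies $\sum_{i \in B} s_i > n/3$, giving both required bounds at once. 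The stopping criterion is triggered unless $n \leq 1$, in which case the statement is trivial. There is no genuine obstacle here; the only delicate point is the off-by-one bookkeeping caused by $x$ being counted in both $T$ and $\tilde T$, which is precisely why the full centroid bound $s_i \leq n/2$, rather than any weaker estimate, is needed in the first case.
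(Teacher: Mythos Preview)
Your proof is correct and follows essentially the same approach as the paper: both start from a centroid (what the paper calls a vertex separator) and then partition the resulting components into two balanced groups. The only difference is in the balancing step---you use a direct two-case greedy argument, while the paper merges the two smallest components recursively until only three remain; your version is arguably cleaner, but the underlying idea is the same.
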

From now on, for a given family of trees $(G_n)$, we fix some $\varepsilon>0$ and assume without loss of generality that the root $o_n$ is chosen to be a $\delta$-center of mass for some $\delta>0$ which does not depend on $n$.  For a pair of real-valued, non-negative sequences $(f(n))_{n \geq 0}$ and $(g(n))_{n \geq 0}$, we write $f \asymp g$ if $f$ and $g$ have the same order of magnitude, and we write  $f \ll g$ if $f(n)/g(n)$ converges to $0$ for $n$ going to infinity. \\

\vspace{-0.5mm}We start with a criterion on trees which allows us to determine when cutoff does not occur for the  simple random walk. 
\begin{theorem}\label{thm:NoCutoff} For a sequence of trees $(G_n)$ suppose that
\begin{equation}\label{eq:NoCutoffCriterion}
\max_{e\in E(G_n)} |e| |T_e| \asymp \max_{v \in V(G_n)} \sum_{e \in \ell(v)} |T_e|\, .
\end{equation} Then the simple random walk on $(G_n)$ does not exhibit cutoff.
\end{theorem}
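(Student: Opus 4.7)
The plan is to apply the characterization of cutoff on trees due to Basu, Hermon and Peres (Lemma~\ref{lem:ProductCriterion}), which states that $(G_n)$ exhibits cutoff if and only if the product criterion $t^n_{\text{mix}}(\varepsilon)\lambda_n\to\infty$ holds. Thus, to rule out cutoff it suffices to produce a constant $C=C(\varepsilon,\delta)>0$ with $t^n_{\text{mix}}(\varepsilon)\lambda_n\le C$ for every $n$. I will establish this by sandwiching $1/\lambda_n$ and $t^n_{\text{mix}}(\varepsilon)$ between the two geometric quantities appearing in~\eqref{eq:NoCutoffCriterion}.

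For the lower bound $1/\lambda_n\gtrsim_\delta \max_e |e||T_e|$, I use the Dirichlet variational characterization of $\lambda_n$ with an explicit test function. Fix $e\in E(G_n)$, and for $x\in V(G_n)$ let $u_e(x)$ be the vertex farthest from $o_n$ lying on both $\ell(x)$ and $\ell(e_+)$; set $f_e(x):=|u_e(x)|\in\{0,1,\dots,|e|\}$. By construction $f_e\equiv|e|$ on $T_e$ and $f_e$ is constant on every subtree branching off $\ell(e_+)$, so only the $|e|$ edges of $\ell(e_+)$ contribute to the Dirichlet form, giving $\mathcal{E}(f_e,f_e)=|e|/|V(G_n)|$. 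For the variance, the choice of $o_n$ as a $\delta$-center of mass provides a subtree of $G_n$ of size at least $\delta|V(G_n)|$ disjoint from $T_e$ on which $f_e\equiv0$; combining these $\delta|V(G_n)|$ vertices with the $|T_e|$ vertices on which $f_e=|e|$ in the pair formula $\V_\pi(f_e)=\tfrac{1}{2|V(G_n)|^2}\sum_{x,y}(f_e(x)-f_e(y))^2$ yields $\V_\pi(f_e)\gtrsim \delta|e|^2|T_e|/|V(G_n)|$. The variational inequality then produces $\lambda_n\le 2/(\delta|e||T_e|)$, and optimizing over $e$ concludes this step.

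For the matching upper bound $t^n_{\text{mix}}(\varepsilon)\lesssim_{\varepsilon,\delta}\max_v\sum_{e\in\ell(v)}|T_e|$, I will argue that on a tree whose root is a $\delta$-center of mass the hitting time from the root admits the representation
\[
\E_{o_n}[T_v]\;\asymp_\delta\;\sum_{e\in\ell(v)}|T_e|,
\]
since the uniform bound $|T_e|\le(1-\delta)|V(G_n)|$ forces the backward and side excursions off the geodesic $\ell(v)$ to inflate each edge traversal only by a $\delta$-dependent constant. Combined with the Peres--Sousi characterization of mixing times as hitting times of stationary sets of measure at least $\tfrac{1}{2}$, or equivalently with the tree mixing-time estimate implicit in \cite{BHP:CutoffTrees}, one then has $t^n_{\text{mix}}(\varepsilon)\lesssim\max_v\E_{o_n}[T_v]$.

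Combining the two bounds with the hypothesis~\eqref{eq:NoCutoffCriterion} yields $t^n_{\text{mix}}(\varepsilon)\lambda_n=O(1)$, and Lemma~\ref{lem:ProductCriterion} then rules out cutoff. The hard part is the mixing-time upper bound in the previous paragraph: the generic Poincar\'e inequality $t_{\text{mix}}\le t_{\text{rel}}\log(|V|/\varepsilon)$ is too lossy, since the logarithmic factor would spoil the conclusion whenever $|V(G_n)|\to\infty$, and avoiding it requires a genuinely geometric hitting-time estimate in which the center-of-mass choice of $o_n$ plays an essential role.
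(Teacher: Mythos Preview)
Your overall strategy coincides with the paper's: combine Lemma~\ref{lem:ProductCriterion} with a lower bound on $t^n_{\textup{rel}}$ via a test function and an upper bound on $t^n_{\textup{mix}}$ via hitting times. Your relaxation-time argument is correct and is essentially Proposition~\ref{pro:LowerBoundRelaxationTime} (the paper routes it through the Hardy formulation of Lemma~\ref{lem:EquivalentHardy}, but the underlying test function is the same).

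The mixing-time step, however, has a genuine gap: you use the hitting time \emph{from} the root and claim $\E_{o_n}[\tau_v]\asymp_\delta\sum_{e\in\ell(v)}|T_e|$. This is false. Crossing an edge $e\in\ell(v)$ in the direction $e_-\to e_+$ costs, up to constants, $|V(G_n)|-|T_e|$ (the size of the part of the tree \emph{behind} you), and your inequality $|T_e|\le(1-\delta)|V(G_n)|$ only yields the lower bound $|V(G_n)|-|T_e|\ge\delta|V(G_n)|$, which goes the wrong way. Concretely, for the binary tree of depth $n$ (which satisfies~\eqref{eq:NoCutoffCriterion} with both sides $\asymp 2^n$) one gets $\sum_{e\in\ell(v)}|T_e|\asymp 2^n$ for a leaf $v$ but $\E_{o_n}[\tau_v]\asymp n\,2^n$; your chain of inequalities would then only give $t_{\textup{mix}}^n\lambda_n\lesssim n$, which does not exclude cutoff.

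The fix is simply to reverse the direction: use $\E_v[\tau_{o_n}]$, for which the standard tree identity gives $\E_v[\tau_{o_n}]=\sum_{e\in\ell(v)}(2|T_e|-1)\le 2\sum_{e\in\ell(v)}|T_e|$ with no center-of-mass hypothesis. Together with $t_{\textup{mix}}(\varepsilon)\le C\log(\varepsilon^{-1})\max_v\E_v[\tau_{o_n}]$ (the Aldous/Peres--Sousi hitting-time bound, Proposition~\ref{pro:HittingtimeUpper}), this closes the argument. In particular, contrary to your closing remark, the $\delta$-center-of-mass assumption is \emph{not} needed for the mixing-time upper bound; it enters only in the variance estimate for the relaxation-time lower bound (and in the mixing-time lower bound of Proposition~\ref{pro:HittingtimeLower}, which is not used here).
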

Using Theorem \ref{thm:NoCutoff}, one can directly check that for example the simple random walk on the segment of size $n$ or on the regular tree truncated at level $n$ does not exhibit cutoff. In order to state a converse theorem, which guarantees the existence of cutoff, we require the following definition. Fix a tree $G$ and a vertex $v \neq o$. 
Let $v_0=o,v_1,\dots, v_k=v$ denote the sites in $\ell(v)$ for $k=|v|$. For each $v_i$, let $\bar{T}_i$ be  the largest subtree which is attached to $v_i$ and does not intersect with $\ell(v)$ in any site other than $v_i$. A $\boldsymbol{v}$\textbf{-retraction} of $G$ is a tree $\tilde{G}$ rooted at some site $\tilde{o}$, which consists of a segment of size $|\ell(v)|$ with $\tilde{o}$ at one of its endpoints. In addition, we let a binary tree of size $|\bar{T}_i|$ emerge from each site in the segment at distance $i$ (if $|\bar{T}_i|$ is not a power of two, consider the binary tree with leaves missing at the last level). 
\begin{figure} \label{fig:Retraction}
\begin{center}
\begin{tikzpicture}[scale=0.7]

	\node[shape=circle,scale=1,fill] (A) at (0,0){} ;
 	\node[shape=circle,scale=1,fill] (B) at (3,0){} ;
	\node[shape=circle,scale=1,fill] (C) at (5.5,0){} ;
 	\node[shape=circle,scale=1,fill] (D) at (7.5,0){} ;	
 	
 	  	\draw[line width=1.7pt,RoyalBlue,dashed] (A) to (B);		
 		\draw[line width=1.7pt,RoyalBlue,dashed] (B) to (C);		
 		\draw[line width=1.7pt,RoyalBlue,dashed] (C) to (D);

	\node[shape=circle,scale=1,fill] (AX) at (11,0){} ;
 	\node[shape=circle,scale=1,fill] (BX) at (14,0){} ;
	\node[shape=circle,scale=1,fill] (CX) at (16.5,0){} ;
 	\node[shape=circle,scale=1,fill] (DX) at (18.5,0){} ;		

 	  	\draw[line width=1.7pt,RoyalBlue,dashed] (AX) to (BX);		
 		\draw[line width=1.7pt,RoyalBlue,dashed] (BX) to (CX);		
 		\draw[line width=1.7pt,RoyalBlue,dashed] (CX) to (DX);

 	\node[shape=circle,scale=1,fill] (A1) at (0,-1.5){} ;
	\node[shape=circle,scale=1,fill] (A2) at (0,-3){} ;
 	\node[shape=circle,scale=1,fill] (A3) at (-0.8,-4.5){} ;		
	\node[shape=circle,scale=1,fill] (A4) at (0,-4.5){} ;
 	\node[shape=circle,scale=1,fill] (A5) at (0.8,-4.5){} ;

	\node[shape=circle,scale=1,fill] (B1) at (3-0.4,-1.5){} ;
 	\node[shape=circle,scale=1,fill] (B2) at (3.4,-1.5){} ;
	\node[shape=circle,scale=1,fill] (B3) at (3-0.4,-3){} ;
 	\node[shape=circle,scale=1,fill] (B4) at (3.4,-3){} ;

	\node[shape=circle,scale=1,fill] (C1) at (5.5,-1.5){} ;
 	\node[shape=circle,scale=1,fill] (C2) at (5.5,-3){} ;

	\node[shape=circle,scale=1,fill] (D1) at (7.5,-1.5){} ;

 	  	\draw[thick] (A) to (A1);		
 		\draw[thick] (A1) to (A2);		
 		\draw[thick] (A2) to (A3);	
 		\draw[thick] (A2) to (A4);	
 		\draw[thick] (A2) to (A5);	
 	  	\draw[thick] (B) to (B1);		
 		\draw[thick] (B) to (B2);		
 		\draw[thick] (B1) to (B3);	
 		\draw[thick] (B2) to (B4);	
  		\draw[thick] (C) to (C1);		
 		\draw[thick] (C1) to (C2);	 	
   		\draw[thick] (D) to (D1);

	\draw (0,-0.55) to [closed, curve through = {(-0.25,-0.7)(-0.7,-5)(0,-5.1)(0.7,-5)(0.25,-0.7)}] (0,-0.55);
 	 \draw (3,-0.55) to [closed, curve through = {(3-0.25,-0.7)(3-0.7,-3.5)(3,-3.6)(3.7,-3.5)(3.25,-0.7)}] (3,-0.55);
 	 \draw (5.5,-0.55) to [closed, curve through = {(5.5-0.25,-0.7)(5.5-0.3,-3.5)(5.5,-3.55)(5.8,-3.5)(5.75,-0.7)}] (5.5,-0.55); 	
 
  	 \draw (7.5,-0.55) to [closed, curve through = {(7.5-0.25,-0.7)(7.5-0.3,-2.2)(7.5,-2.25)(7.8,-2.2)(7.75,-0.7)}] (7.5,-0.55);

 	 	\node[scale=1]  at (0.95,-0.5){$v_0=o$};
 	 	\node[scale=1]  at (3.5,-0.5){$v_1$};
 	 	\node[scale=1]  at (6,-0.5){$v_2$};
 		\node[scale=1]  at (8.5,-0.5){$v_3=v$};

		\node[scale=0.9]  at (-1,-3.5){$\bar{T}_0$};
		\node[scale=0.9]  at (3,-4){$\bar{T}_1$};
		\node[scale=0.9]  at (5.5,-4){$\bar{T}_2$};
		\node[scale=0.9]  at (7.5,-2.7){$\bar{T}_3$};

 	\node[shape=circle,scale=1,fill] (AX1) at (11-0.4,-1.5){} ;
	\node[shape=circle,scale=1,fill] (AX2) at (11.4,-1.5){} ;
 	\node[shape=circle,scale=1,fill] (AX3) at (11-0.8,-3){} ;		
	\node[shape=circle,scale=1,fill] (AX4) at (11,-3){} ;
 	\node[shape=circle,scale=1,fill] (AX5) at (11.8,-3){} ;

	\node[shape=circle,scale=1,fill] (BX1) at (14-0.4,-1.5){} ;
 	\node[shape=circle,scale=1,fill] (BX2) at (14.4,-1.5){} ;
	\node[shape=circle,scale=1,fill] (BX3) at (14-0.8,-3){} ;
 	\node[shape=circle,scale=1,fill] (BX4) at (14,-3){} ;

	\node[shape=circle,scale=1,fill] (CX1) at (16.1,-1.5){} ;
 	\node[shape=circle,scale=1,fill] (CX2) at (16.9,-1.5){} ;

	\node[shape=circle,scale=1,fill] (DX1) at (18.5,-1.5){} ;

		\draw[thick] (AX) to (AX1);		
 		\draw[thick] (AX) to (AX2);		
 		\draw[thick] (AX1) to (AX3);	
 		\draw[thick] (AX1) to (AX4);	
 		\draw[thick] (AX2) to (AX5);	
 	  	\draw[thick] (BX) to (BX1);		
 		\draw[thick] (BX) to (BX2);		
 		\draw[thick] (BX1) to (BX3);	
 		\draw[thick] (BX1) to (BX4);	
  		\draw[thick] (CX) to (CX1);		
 		\draw[thick] (CX) to (CX2);	 	
   		\draw[thick] (DX) to (DX1);

	\draw (0,-0.55) to [closed, curve through = {(-0.25,-0.7)(-0.7,-5)(0,-5.1)(0.7,-5)(0.25,-0.7)}] (0,-0.55);
 	 \draw (3,-0.55) to [closed, curve through = {(3-0.25,-0.7)(3-0.7,-3.5)(3,-3.6)(3.7,-3.5)(3.25,-0.7)}] (3,-0.55);
 	 \draw (5.5,-0.55) to [closed, curve through = {(5.5-0.25,-0.7)(5.5-0.3,-3.5)(5.5,-3.55)(5.8,-3.5)(5.75,-0.7)}] (5.5,-0.55); 	
 
  	 \draw (7.5,-0.55) to [closed, curve through = {(7.5-0.25,-0.7)(7.5-0.3,-2.2)(7.5,-2.25)(7.8,-2.2)(7.75,-0.7)}] (7.5,-0.55);

		 \node[scale=1]  at (11.5,-0.5){$\tilde{o}$};
 		\node[scale=1]  at (19,-0.5){$v$};

\end{tikzpicture}
\end{center}
\caption{\label{fig:RetractionCap} Example of a tree $G$ on the left-hand side and its corresponding $v$-retraction $\tilde{G}$ on the right-hand side. The edges in the segment $\ell(v)$ connecting the root and the site $v$ are depicted in dashed blue.}

\end{figure}
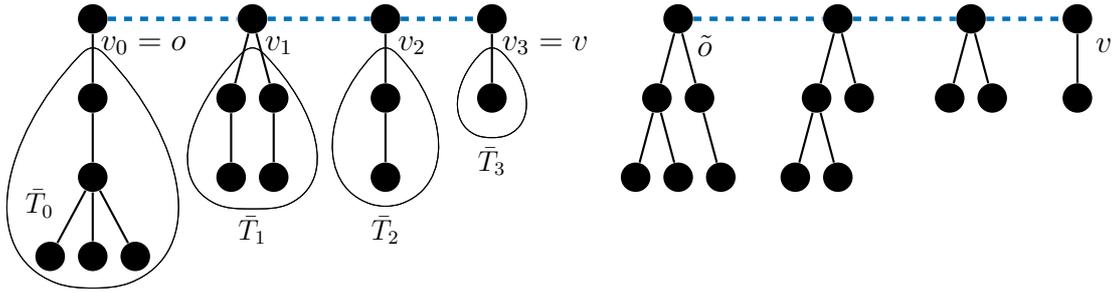

\begin{theorem}\label{thm:CutoffCompactification} For every $G_n$ in $(G_n)_{n \geq 1}$, suppose that we find some $v^{\ast}_n \in V(G_n)$ with
\begin{equation}\label{eq:MaximalPath}
\sum_{e \in \ell(v^{\ast}_n)} |T_e| \asymp \max_{v \in V(G_n)} \sum_{e \in \ell(v)} |T_e| \gg |V(G_n)|  \, .
\end{equation} If we have that 
\begin{equation}\label{eq:HyperbolicGrowth}
\max_{e\in \ell(v^{\ast}_n)} |e| |T_e| \ll \max_{v \in V(G_n)} \sum_{e \in \ell(v)} |T_e|
\end{equation} then for any sequence $(\tilde{G}_n)$ of $v^{\ast}_n$-retractions of $(G_n)$, we have that the simple random walk on $(\tilde{G}_n)$ exhibits cutoff.
\end{theorem}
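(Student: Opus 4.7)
The plan is to verify the product criterion $t^n_{\text{mix}}(\varepsilon)\lambda_n \to \infty$, which by Lemma \ref{lem:ProductCriterion} is equivalent to cutoff for the simple random walk on $(\tilde G_n)$. Write $M_n := \sum_{e \in \ell(v^{\ast}_n)} |T_e|$ and $R_n := \max_{e \in \ell(v^{\ast}_n)} |e|\,|T_e|$; hypothesis \eqref{eq:HyperbolicGrowth} yields $R_n \ll M_n$, so it suffices to show $t_{\text{mix}}(\tilde G_n) \gtrsim M_n$ and $1/\lambda(\tilde G_n) \lesssim R_n$.

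For the mixing time lower bound, I would note that the subtree sizes along the spine of $\tilde G_n$ agree with those along $\ell(v^{\ast}_n)$ in $G_n$ by construction of the retraction. Hence, if $v$ denotes the far endpoint of the spine of $\tilde G_n$, the classical hitting time formula for the variable-speed simple random walk on trees gives $\E_{\tilde o}[\tau_v] = M_n$. A standard mixing-versus-hitting time comparison on trees (as used for instance in \cite{BHP:CutoffTrees,PS:CutoffTree}) then yields $t_{\text{mix}}(\tilde G_n) \gtrsim M_n$.

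For the spectral gap I would use a variance decomposition. Let $\varphi : V(\tilde G_n) \to \{0,\dots,k\}$ send each vertex to the spine index $i$ of the binary tree containing it, and set $p_i := |\bar T_i|/|V(\tilde G_n)|$, $\pi_i$ uniform on $\bar T_i$, and $\bar f(i) := \E_{\pi_i}[f]$. The law of total variance gives
\begin{equation*}
\Var_\pi(f) \;=\; \sum_{i=0}^k p_i\, \Var_{\pi_i}(f) \;+\; \Var_p(\bar f).
\end{equation*}
The within-tree term is bounded via the Poincar\'e inequality on each binary $\bar T_i$ (whose spectral gap is of order $1/|\bar T_i|$); after summing and invoking the $\delta$-center-of-mass property (which forces $R_n \geq |T_{e_1}| \gtrsim |V(G_n)| \geq \max_i |\bar T_i|$), this contributes at most an $O(R_n)$ multiple of the Dirichlet form $\mathcal{E}(f,f)$ of the simple random walk on $\tilde G_n$. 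The between-tree term is handled by the discrete Hardy inequality with weights $u_i = |\bar T_i|$ and unit weights on the RHS, whose constant $\max_l\, l\, |T_{e_l}| = R_n$ exactly matches our geometric quantity. The increments $(\bar f(l) - \bar f(l-1))^2$ are then bounded by inserting the spine values $f(\tilde v_l)$ and controlling the boundary terms $(\bar f(l) - f(\tilde v_l))^2 \lesssim \sum_{e \in E(\bar T_l)}(f(e_+) - f(e_-))^2$ via Cauchy--Schwarz, using the balanced binary structure of $\bar T_l$ to obtain $\sum_{e \in E(\bar T_l)} |T_e|^2 \lesssim |\bar T_l|^2$ and thereby avoid logarithmic losses. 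Combining yields $\Var_p(\bar f) \lesssim R_n\,\mathcal{E}(f,f)$ and thus $1/\lambda(\tilde G_n) \lesssim R_n$.

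The main obstacle is the spectral gap estimate, and in particular the boundary step quantifying how much the binary-tree average $\bar f(l)$ differs from the spine value $f(\tilde v_l)$: the \emph{binary} structure of the attached trees in the retraction construction is essential here, as it keeps the quadratic sum $\sum_e |T_e|^2$ from blowing up. Putting the two bounds together one obtains $t_{\text{mix}}(\tilde G_n)\lambda(\tilde G_n) \gtrsim M_n/R_n \to \infty$, and Lemma \ref{lem:ProductCriterion} concludes cutoff.
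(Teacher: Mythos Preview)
Your argument is essentially correct and takes a genuinely different route from the paper for the key spectral-gap estimate. The paper proves the relaxation-time bound (Lemma~\ref{lem:CutoffWeights}) by applying the weighted-path criterion of Proposition~\ref{pro:UpperBoundRelaxationTime} with the explicit choice $a_e=|e|^{-1/2}$ on the spine and $a_e=(\max\{i,1\})^{-1/2}(|e|-i)^{-2}$ inside $\bar T_i$; after a direct computation this reduces to the Cauchy--Schwarz estimate $\sqrt{|e|}\sum_{i\ge |e|}\sqrt{i}\,|\bar T_i|\ll M_n$. Your block/variance decomposition is more structural: it separates the macroscopic spine fluctuation (controlled by a one-dimensional Muckenhoupt--Hardy inequality, whose constant is $R_n/|V|$) from the microscopic within-tree fluctuation (controlled by the Poincar\'e inequality on binary trees). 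Both arguments exploit the binary structure of the attached trees to avoid logarithmic losses---the paper through the geometric decay of $|T_e|$ when bounding $A_e$ for $e\in\bar T_i$, and you through $\sum_{e\in E(\bar T_l)}|T_e|^2\lesssim|\bar T_l|^2$. Your approach makes the role of Remark~\ref{rem:AlphaRetraction} especially transparent, while the paper's method stays within the analytic framework of Section~\ref{sec:PrelimSpectral}.

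Two small points. First, the hitting-time identity should be $\E_v[\tau_{\tilde o}]\asymp M_n$ rather than $\E_{\tilde o}[\tau_v]$; the two directions are not equal on trees, and it is the former that Proposition~\ref{pro:HittingtimeLower} controls. Second, your chain $R_n\ge|T_{e_1}|\gtrsim|V(G_n)|$ requires that $\tilde o$ be a $\tilde\delta$-center of mass of $\tilde G_n$, which is asserted (also in the paper) but not argued in detail. This does not affect the conclusion: the within-tree contribution is in any case $\lesssim\max_i|\bar T_i|\le|V(G_n)|\ll M_n$ by \eqref{eq:MaximalPath}, so one still obtains $\tilde t^{\,n}_{\textup{rel}}\ll M_n$ and the product criterion applies.
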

We will see that taking retractions is necessary for cutoff in Theorem \ref{thm:CutoffCompactification} when we study random walks on spherically symmetric trees in Section \ref{sec:Spherically}, see Remark \ref{rem:NecessaryRetraction}. Note that instead of attaching binary trees, we may allow also for attaching other classes of graphs with sufficiently fast growth, see Remark \ref{rem:AlphaRetraction} for precise conditions on the attached trees.
We will now give a simple example of a family of trees on which the simple random walk exhibits  cutoff. 
This follows as an immediate consequence of Theorem \ref{thm:CutoffCompactification}.
 \begin{corollary}\label{cor:NewTreesWithCutoff} Consider the family of trees $(G_n)_{n\geq 1}$, where $G_n$ consists of a segment of size $n$, rooted at one of its endpoints, and binary trees of size $\lfloor n/(i+1)^{2}\rfloor$ attached at distance $i$ from the root for all $i\in \{0,1,\dots,n\}$. Then the root is a $\delta$-center of mass for $\delta=\frac{1}{6}$ and the simple random walk on $(G_n)$ exhibits cutoff. 
 \end{corollary}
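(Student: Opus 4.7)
The plan is to apply Theorem \ref{thm:CutoffCompactification} to $G_n$ with the choice $v^{\ast}_n := v_n$, the endpoint of the segment opposite the root. Since each subtree $\bar{T}_i$ of $G_n$ hanging off $v_i$ is already a binary tree of the prescribed size $\lfloor n/(i+1)^2\rfloor$, the $v^{\ast}_n$-retraction $\tilde{G}_n$ coincides with $G_n$ itself, so cutoff on $\tilde{G}_n$ is the same as cutoff on $G_n$.

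First, using $\sum_{i\geq 0}(i+1)^{-2}=\pi^2/6$, one gets $|V(G_n)| = (n+1)+\sum_{i=0}^{n}\lfloor n/(i+1)^2\rfloor\asymp n$. The binary tree attached at the root contains $n$ vertices while its complement contains at least the $n$ remaining segment vertices, so both are at least $|V(G_n)|/6$ for all $n$, confirming that $o$ is a $\tfrac{1}{6}$-center of mass.

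Second, for each segment edge $e_i$ I would use the closed form
\[
|T_{e_i}| \;=\; (n-i+1) \;+\; \sum_{j=i}^{n}\lfloor n/(j+1)^2\rfloor \;\asymp\; (n-i) + n/(i+1),
\]
and sum to obtain $\sum_{e\in\ell(v_n)}|T_e|\asymp n^2$. For any $v$ lying inside a binary tree attached at some $v_i$, $\ell(v)$ extends $\ell(v_i)$ by at most $O(\log n)$ edges of weight at most $\lfloor n/(i+1)^2\rfloor$, so $v_n$ attains (up to constants) the maximum of $v\mapsto \sum_{e\in\ell(v)}|T_e|$. Combined with $|V(G_n)|\asymp n\ll n^2$, this establishes \eqref{eq:MaximalPath}.

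The main obstacle will be verifying the spectral hypothesis \eqref{eq:HyperbolicGrowth}, namely that $\max_{1\leq i\leq n} i\cdot |T_{e_i}| \ll \sum_{i=1}^{n}|T_{e_i}|$. Here one must argue carefully that, despite the naive bound produced by the segment contribution $i(n-i)$, the decay $|\bar{T}_i|\asymp n/(i+1)^2$ of the attached binary trees provides the extra logarithmic gain needed for the ratio to vanish; this quantitative step is precisely what distinguishes this example from the no-cutoff regime covered by Theorem \ref{thm:NoCutoff}. Once \eqref{eq:HyperbolicGrowth} is in place, Theorem \ref{thm:CutoffCompactification} applies and yields cutoff for the simple random walk on $\tilde{G}_n = G_n$, as claimed.
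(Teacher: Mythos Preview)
Your strategy coincides with the paper's one-line justification: deduce the corollary from Theorem~\ref{thm:CutoffCompactification}, taking $v^\ast_n$ to be the far endpoint and observing that $G_n$ is already its own $v^\ast_n$-retraction. Your verifications of $|V(G_n)|\asymp n$, of the $\tfrac16$-center-of-mass property, and of \eqref{eq:MaximalPath} with $\sum_{e\in\ell(v_n)}|T_e|\asymp n^2$ are all correct.

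The genuine gap is exactly where you flag it, and your proposed resolution is wrong. From your own formula $|T_{e_i}|\asymp (n-i)+n/(i+1)$ one gets
\[
\sum_{i=1}^{n}|T_{e_i}|\;\asymp\;\sum_{i=1}^{n}(n-i)\;+\;n\sum_{i=1}^{n}\frac{1}{i+1}\;\asymp\; n^2 + n\log n \;\asymp\; n^2,
\]
while the bare segment contribution already forces
\[
\max_{1\le i\le n} i\,|T_{e_i}|\;\ge\;\max_{1\le i\le n} i(n-i)\;\asymp\; \frac{n^2}{4}.
\]
Both sides of \eqref{eq:HyperbolicGrowth} are therefore of order $n^2$; the attached binary trees add only a lower-order $n\log n$ term to the sum and produce no ``logarithmic gain'' whatsoever. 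Condition \eqref{eq:HyperbolicGrowth} simply fails.

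This is not a technicality that a sharper argument repairs. Applying Proposition~\ref{pro:LowerBoundRelaxationTime} to the middle segment edge $e_{\lfloor n/2\rfloor}$ gives $t^n_{\textup{rel}}\ge \delta\cdot\lfloor n/2\rfloor\cdot|T_{e_{\lfloor n/2\rfloor}}|\asymp n^2$, while Proposition~\ref{pro:HittingtimeUpper} yields $t^n_{\textup{mix}}\lesssim n^2$. Hence $t^n_{\textup{rel}}\asymp t^n_{\textup{mix}}$ and Lemma~\ref{lem:ProductCriterion} shows that the simple random walk on the family $(G_n)$ as literally described does \emph{not} exhibit cutoff. The statement appears to be mis-specified; one clean fix is to attach binary trees of size $\lfloor n^2/(i+1)^2\rfloor$ instead, which makes $\sum_i|T_{e_i}|\asymp n^2\log n\gg n^2\asymp\max_i i\,|T_{e_i}|$, so that both \eqref{eq:MaximalPath} and \eqref{eq:HyperbolicGrowth} hold and Theorem~\ref{thm:CutoffCompactification} applies.
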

Our last main result is a criterion which is particularly suited to verify cutoff for \textit{thin} and \textit{long} trees, see also the trees constructed in \cite{PS:CutoffTree}. For all $k \geq 0$, we set
\begin{equation}\label{V_m}
V_k=V_k(G):= \bigcup_{v \colon |v|= k} V(T_v) \ .
\end{equation}

\begin{theorem}\label{thm:CutoffMiclo} Recall \eqref{V_m} and suppose that a family of trees $(G_n)$ with maximum degrees $(\Delta_n)$ satisfies
\begin{equation}\label{eq:ComplimentAssumption}
\max_{k \geq 1} k | V_k(G_n) | \ll \frac{1}{\Delta_n}\max_{v \in V(G_n)} \sum_{e \in \ell(v)} |T_e| \ . 
\end{equation} Then the simple random walk on $(G_n)$ exhibits cutoff.
\end{theorem}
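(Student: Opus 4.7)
The plan is to verify the product criterion of Lemma~\ref{lem:ProductCriterion}: cutoff reduces to $t^n_{\mathrm{mix}}(\varepsilon)\lambda_n \to \infty$. Write $M_n := \max_{k\geq 1}k|V_k(G_n)|$ and $S_n := \max_v \sum_{e\in\ell(v)}|T_e|$; the hypothesis \eqref{eq:ComplimentAssumption} reads $\Delta_n M_n \ll S_n$. The two required ingredients are the lower bound $t^n_{\mathrm{mix}}(\varepsilon) \gtrsim S_n$ and the upper bound $1/\lambda_n \lesssim \Delta_n M_n$, whose combination yields the product criterion.

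For the mixing-time lower bound, let $v^*_n$ achieve the maximum in $S_n$. A first-passage analysis in the variable-speed model on trees gives the identity $\E_w[\tau_z] = \sum_{e \in P(w,z)}|T^{(w)}_e|$, where $T^{(w)}_e$ denotes the component of $G_n\setminus e$ containing $w$; applied with $w = v^*_n$ and $z = o_n$, this yields $\E_{v^*_n}[\tau_{o_n}] = \sum_{e \in \ell(v^*_n)}|T_e| = S_n$. By the $\delta$-center-of-mass property of $o_n$ (Proposition~\ref{pro:CenterOfMass}), there is a set $A$ with $\pi(A)$ bounded below such that every path from $v^*_n$ to $A$ crosses $o_n$; hence $\E_{v^*_n}[\tau_A] \geq S_n$. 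A standard concentration argument for hitting times of reversible chains (controlling the second moment of $\tau_A$ via its recursive tree structure) then produces $t^n_{\mathrm{mix}}(\varepsilon) \gtrsim S_n$.

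The central step is the spectral-gap upper bound $1/\lambda_n \leq C\Delta_n M_n$. After normalizing $f(o_n) = 0$ and applying Cauchy--Schwarz to the telescoping sum $f(v) = \sum_{e\in\ell(v)}\nabla f(e)$ with $\nabla f(e) := f(e_+) - f(e_-)$, one obtains
\begin{equation*}
\sum_v f(v)^2 \leq \sum_v |v| \sum_{e\in\ell(v)}(\nabla f(e))^2 = \sum_e (\nabla f(e))^2 \sum_{w\in T_e}|w|.
\end{equation*}
The task reduces to showing $\sum_{w\in T_e}|w| \leq C\Delta_n M_n$ for every edge $e$. Split the inner sum by depth as $\sum_{w\in T_e}|w| = |e||T_e| + \sum_{k>|e|}|\{w\in T_e:|w|\geq k\}|$, and use $|\{w\in T_e:|w|\geq k\}|\leq \min(|V_k|, |T_e|)$. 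The hypothesis provides $|V_k| \leq M_n/k$ and $|T_e| \leq |V_{|e|}| \leq M_n/|e|$; combining these depth-wise bounds with the branching constraint $L_k \leq (\Delta_n-1)L_{k-1}$ on descendants of $e_+$, the tail sum collapses to a bound of the form $C\Delta_n M_n$. Dividing by $|V(G_n)|$ gives $\V_\pi(f) \leq C\Delta_n M_n\cdot|V(G_n)|^{-1}\sum_e(\nabla f(e))^2$, i.e.\ $1/\lambda_n \leq C\Delta_n M_n$.

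Combining the two bounds yields $t^n_{\mathrm{mix}}(\varepsilon)\lambda_n \gtrsim S_n/(\Delta_n M_n) \to \infty$ by hypothesis, which verifies the product criterion. The main obstacle is the spectral-gap estimate in the previous paragraph: a naive Cauchy--Schwarz path inequality would only produce $1/\lambda_n \leq CM_n\log D_n$ (with $D_n$ the diameter of $G_n$), which together with the general upper bound $S_n \leq CM_n\log D_n$ is too weak to conclude. The role of the $\Delta_n$ factor in the hypothesis is precisely to sharpen this estimate through the branching constraint along each descending path, replacing the diameter factor by $\Delta_n$, and producing this sharpening is the main technical work.
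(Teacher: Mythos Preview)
Your overall plan (product criterion; bound $t_{\mathrm{rel}}$ above and $t_{\mathrm{mix}}$ below) matches the paper's. The mixing-time step is essentially fine: the first inequality of Proposition~\ref{pro:HittingtimeLower} already gives $t_{\mathrm{mix}}(\varepsilon/2)\geq \tfrac{\varepsilon}{2}\max_v \E_v[\tau_{\mathrm{hit}}(o)]$ via a geometric iteration of the Markov property, and a direct recursion in the variable-speed model yields $\E_v[\tau_{\mathrm{hit}}(o)]\geq \sum_{e\in\ell(v)}|T_e|$ without any concentration argument. (The paper records only the cruder bound with the extra $1/\Delta$, which is why $\Delta_n$ appears in its hypothesis; it is absorbed on the \emph{mixing} side, not the relaxation side.)

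The genuine gap is in your relaxation-time step. Your Cauchy--Schwarz along paths is exactly the weighted-path bound of Proposition~\ref{pro:UpperBoundRelaxationTime} with constant weights, giving $t_{\mathrm{rel}}\leq \max_e \sum_{w\in T_e}|w|$. You then assert that the branching constraint $L_k\leq(\Delta_n-1)L_{k-1}$ collapses this to $C\Delta_n M_n$, but this is false in general. Consider a bounded-degree tree built by growing a binary tree to depth $j_0\asymp\log M$ and then letting each branch continue as a single path so that the level sizes satisfy $L_k\asymp M/k^2$ for $j_0\leq k\leq \sqrt{M}$. Then $k|V_k|\asymp M$ for every $k$ in this range, so $M_n\asymp M$, while $\sum_{w}|w|=\sum_k|V_k|\asymp M\log M$; hence for the root edge $e_0$ one has $\sum_{w\in T_{e_0}}|w|\asymp M_n\log M_n\gg \Delta_n M_n$. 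The branching constraint only controls the \emph{initial} growth of $L_k$ near $|e|$ and gives nothing once $L_k$ has reached the global level sizes; it cannot remove the logarithmic loss inherent in the uniform-weight path bound.

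The paper's route is different in kind: Proposition~\ref{pro:UpperBoundRelaxationTimeHardy} proves $t_{\mathrm{rel}}\leq 32\max_k k|V_k|=32M_n$ via the discrete Hardy inequality of Lemma~\ref{lem:EquivalentHardy}, using a dyadic decomposition of $V$ into level sets $B_i=\{v:\,2^i\leq\sum_{e\in\ell(v)}g(e)<2^{i+1}\}$ and the extremal quantity $\nu_B$ from~\eqref{def:alphaW}. This dyadic argument is precisely what eliminates the $\log D_n$ loss; it is not reproducible by any single choice of path weights in Proposition~\ref{pro:UpperBoundRelaxationTime}. So to complete your proof you need to replace the path/Cauchy--Schwarz step by the Hardy-type estimate of Proposition~\ref{pro:UpperBoundRelaxationTimeHardy}.
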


\subsection{Outline of the paper} \label{sec:Outline}

This paper will be organized as follows. In Section \ref{sec:PrelimSpectral}, we discuss preliminary estimates on the spectral gap of the simple random walk on trees. We present two bounds on the spectral gap, a first bound using a characterization of the spectral gap via a discrete Hardy inequality, and a second bound using weighted paths which follows directly from the first characterization. In Section \ref{sec:PrelimMixing}, we present preliminary facts on upper and lower bounds on the mixing time using a representation as hitting times of large sets. Building on these results, we prove our main criteria for the occurrence of cutoff in Section \ref{sec:Proofs}. Section \ref{sec:CutoffAtypical} is dedicated to applying these criteria to the families of trees in Theorem \ref{thm:CutoffExamples}, and showing that the simple random walk does not exhibit cutoff. In particular, we verify the absence of cutoff for the simple random walk on spherically symmetric trees, supercritical Galton--Watson trees conditioned on survival, and families of trees which converge to the Brownian continuum random tree. The latter includes Galton--Watson trees conditioned to contain a certain number of sites. We conclude with an outlook on open problems.

\section{Some facts about the spectral gap on trees} \label{sec:PrelimSpectral}

In order to study cutoff for the simple random walk on trees, our key tool will be to give bounds on the spectral gap of the random walk $(\eta_t)_{t \geq 0}$. Let $\mathcal{L}$ denote the generator of the random walk on a tree $G=(V,E)$. We study pairs of eigenvalues and eigenfunctions for the random walk, i.e.\ we want to find $(\mu,f)$ for $\mu \in \mathbb{C}$ and $f \colon V \rightarrow \mathbb{C}$ such that
\begin{equation}
(\mathcal{L}f)(x) = \mu f(x)
\end{equation} holds for all $x \in V$. Note that since $(\eta_t)_{t \geq 0}$ is reversible, all eigenvalues of $\mathcal{L}$ are real-valued. Moreover, the function $f \equiv 1$ is always an eigenfunction with respect to the eigenvalue $\lambda=0$. Our goal is to investigate the \textbf{spectral gap} $\lambda$ of the process, i.e.\ the absolute value of the second largest eigenvalue of $\mathcal{L}$, respectively the \textbf{relaxation time} 
\begin{equation}\label{relax}
t_{\textup{rel}}:=\frac{1}{\lambda}.
\end{equation}
Recall the following variational characterization of $\lambda$, see Definition 2.1.3 in \cite{S:SaintFlour}.

\begin{lemma}\label{lem:Rayleigh} Let $\lambda$ be the spectral gap of simple random walk $(\eta_t)_{t \geq 0}$ on the tree $G$. Then we have that
\begin{equation}
\lambda = \min_{f \colon V \rightarrow \R, \Var(f)\neq 0} \frac{\mathcal{E}(f)}{\Var(f)}
\end{equation} where we set
\begin{equation*}\label{def:DirichletVariance}
\mathcal{E}(f):=\frac{1}{|V|}\sum_{e\in E} \left( f(e_{+})-f(e_{-}) \right)^2,  \quad \Var\left( f\right) := \frac{1}{|V|}\sum_{v \in V} f(v)^2 - \frac{1}{|V|^2}\Big( \sum_{v \in V} f(v)\Big)^2  .
\end{equation*}
\end{lemma}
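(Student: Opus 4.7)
The plan is to follow the standard argument identifying $\lambda$ as the second-smallest eigenvalue of $-\mathcal{L}$ and then rewriting the Courant--Fischer min-max formula in terms of the Dirichlet form and variance. First, I would verify that the uniform distribution $\pi$ is reversible for the variable-speed simple random walk: since the off-diagonal entries of the generator are $\mathcal{L}(x,y) = \mathbf{1}[y\sim x]$ and $\pi$ is uniform, the detailed balance $\pi(x)\mathcal{L}(x,y) = \pi(y)\mathcal{L}(y,x)$ is immediate. Consequently $\mathcal{L}$ is self-adjoint on $L^{2}(\pi)$ with the inner product $\sppi{f,g} = \frac{1}{|V|} \sum_{v} f(v)g(v)$, and therefore has a real spectrum $0 = \mu_1 < \mu_2 \leq \dots \leq \mu_{|V|}$ for $-\mathcal{L}$, the strict inequality coming from the connectedness of $G$; by definition $\lambda = \mu_2$.

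Next I would identify the Dirichlet form by summation by parts:
$$\sppi{-\mathcal{L}f,f} = \frac{1}{|V|} \sum_{x \in V} \sum_{y \sim x} (f(x)-f(y)) f(x) = \frac{1}{2|V|}\sum_{x \in V}\sum_{y \sim x}(f(x)-f(y))^2 = \mathcal{E}(f),$$
where the middle equality uses the symmetry of the edge sum (each unordered edge $\{x,y\}$ is counted twice). Likewise $\sppi{f,f} - \sppi{f,1}^2 = \Var(f)$ is immediate from the definition.

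I would then invoke the Courant--Fischer principle for the self-adjoint operator $-\mathcal{L}$, which yields
$$\mu_2 \;=\; \min\Bigl\{ \tfrac{\sppi{-\mathcal{L}f,f}}{\sppi{f,f}} \,:\, f \neq 0,\ \sppi{f,1}=0 \Bigr\}.$$
To remove the orthogonality constraint, I would argue as follows: for any non-constant $f$, let $c = \sppi{f,1}$ and set $g := f - c$. Then $g \perp 1$, the Dirichlet form is unchanged since constants cancel in edge differences so $\mathcal{E}(g) = \mathcal{E}(f)$, and $\sppi{g,g} = \Var(f)$. Conversely, every $g$ with $\sppi{g,1}=0$ and $g \neq 0$ is itself non-constant, and $\sppi{g,g} = \Var(g)$. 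Hence minimizing $\mathcal{E}(f)/\Var(f)$ over non-constant $f$ produces the same value as minimizing $\sppi{-\mathcal{L}g,g}/\sppi{g,g}$ over $g \perp 1$, and both equal $\lambda$.

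No step is genuinely hard once one recognizes this as the classical Rayleigh quotient for a reversible generator. The only minor points of care are keeping the normalization factor $\tfrac{1}{|V|}$ consistent when shuttling between the inner product and the Dirichlet form, and correctly turning the oriented edge sum $\sum_{x,\, y \sim x}$ into $2\sum_{e \in E}$ via symmetry; both are purely bookkeeping.
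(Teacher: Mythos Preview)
Your proof is correct and follows the standard Rayleigh-quotient argument; the paper does not actually prove this lemma but simply states it as a known variational characterization, citing Definition~2.1.3 in Saloff-Coste's Saint-Flour notes. So your write-up supplies more detail than the paper itself, but there is nothing to compare approaches against.
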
 The quantities $\mathcal{E}(f)$ and $\Var\left( f\right)$ are the Dirichlet form and the variance of the function $f$, see Chapter 13 of \cite{LPW:markov-mixing} for a more general introduction. 

\subsection{A discrete Hardy inequality on trees} \label{sec:Hardy}

Using Lemma \ref{lem:Rayleigh}, we obtain the following characterization of the spectral gap in terms of a (discrete) Hardy inequality on trees, see also \cite{EHP:Hardy}.
\begin{lemma}\label{lem:EquivalentHardy} Recall that $o$ is a $\delta$-center of mass for some $\delta>0$, and let $T$ and $\tilde{T}$ be two trees which satisfy \eqref{def:CenterOfMass}. Let $A$ be the smallest constant such that we have
\begin{equation}\label{eq:HardyTree1}
\sum_{v\in V(T)} \Big(\sum_{e \in \ell(v)} g(e)\Big)^2 \leq A \sum_{e \in E(T)} g(e)^2
\end{equation} for all functions $g \colon E(T) \rightarrow \R$ with $g \not\equiv 0$ as well as
\begin{equation}\label{eq:HardyTree2}
\sum_{v\in V(\tilde T)} \Big(\sum_{e \in \ell(v)} \tilde{g}(e)\Big)^2 \leq A \sum_{e \in E(\tilde T)} \tilde{g}(e)^2
\end{equation} for all functions $\tilde{g} \colon E(\tilde{T}) \rightarrow \R$ with $\tilde{g} \not\equiv 0$. Then we have that the spectral gap $\lambda$ of the simple random walk on $G$ satisfies
\begin{equation}\label{eq:RelationAtoSpectralGap}
\lambda \in \left[ \frac{1}{A},\frac{1}{\delta A}\right]  \ .
\end{equation}
\end{lemma}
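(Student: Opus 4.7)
The plan is to derive both bounds in \eqref{eq:RelationAtoSpectralGap} from the variational characterization in Lemma \ref{lem:Rayleigh}, using the Hardy-type inequalities \eqref{eq:HardyTree1} and \eqref{eq:HardyTree2} on the two subtrees $T$ and $\tilde T$ meeting at $o$.

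For the lower bound $\lambda \geq 1/A$, I would take an arbitrary $f \colon V \to \R$ with $\Var(f) \neq 0$ and, after translating by a constant (which preserves both $\mathcal{E}(f)$ and $\Var(f)$), assume $f(o) = 0$. Telescoping along the unique path from the root produces $f(v) = \sum_{e \in \ell(v)} g(e)$ with $g(e) := f(e_+)-f(e_-)$, and the key structural observation is that because $V(T) \cap V(\tilde T) = \{o\}$, each such path lies entirely inside $E(T)$ or inside $E(\tilde T)$. Splitting $\sum_v f(v)^2$ according to this partition, applying \eqref{eq:HardyTree1} to the restriction of $g$ to $E(T)$ and \eqref{eq:HardyTree2} to the restriction of $g$ to $E(\tilde T)$, and recombining then yields
\[
\sum_{v \in V} f(v)^2 \;\leq\; A \sum_{e \in E} g(e)^2 \;=\; A |V| \mathcal{E}(f).
\]
Since $\Var(f) \leq |V|^{-1} \sum_v f(v)^2$, the Rayleigh quotient is bounded below by $1/A$.

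For the upper bound $\lambda \leq 1/(\delta A)$, I would build a near-extremal test function from an extremizer of the relevant Hardy inequality. Without loss of generality assume $A$ is attained as the best constant in \eqref{eq:HardyTree1} on $T$ (otherwise swap the roles of $T$ and $\tilde T$), and let $g \colon E(T) \to \R$ realize equality, or be an arbitrarily close approximation. Define $f$ on all of $V$ by $f(v) = \sum_{e \in \ell(v)} g(e)$ for $v \in V(T)$ and $f(v) = 0$ for $v \in V(\tilde T) \setminus \{o\}$, so that only edges of $T$ contribute to the Dirichlet form and $|V|\mathcal{E}(f) = \sum_{e \in E(T)} g(e)^2$ while $\sum_v f(v)^2 = A|V|\mathcal{E}(f)$. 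Converting $\sum_v f(v)^2$ into $\Var(f)$ requires controlling the mean term; since $f$ is supported on $V(T)$, Cauchy--Schwarz gives $\bar f^{\,2} \leq |V(T)||V|^{-2} \sum_v f(v)^2$, and the center-of-mass bound $|V(\tilde T)| \geq \delta|V|$ (which forces $|V(T)|/|V| \leq 1-\delta + o(1)$) then delivers $\Var(f) \geq \delta A \mathcal{E}(f)$, so Lemma \ref{lem:Rayleigh} yields $\lambda \leq 1/(\delta A)$.

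The main obstacle is the upper bound: the test function has to be engineered so that restricting $g$ to one side of the split produces both the correct Dirichlet form and a variance that captures the full weight of $\sum_v f(v)^2$. The factor $\delta$ emerges precisely because the complementary subtree $\tilde T$, on which $f$ vanishes, carries a $\delta$-fraction of the mass, preventing $\bar f^{\,2}$ from absorbing too much of $\sum_v f(v)^2$. The lower bound, by contrast, is a clean quadratic-form manipulation once one realises that the partition of the tree at $o$ lets one apply the Hardy inequality separately on each side.
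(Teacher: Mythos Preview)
Your proposal is correct and follows essentially the same route as the paper: the lower bound by translating so $f(o)=0$, splitting at $o$, and applying the Hardy inequality on each subtree, and the upper bound by building a test function from a Hardy extremizer supported on one subtree. One small tightening for the upper bound: since $f(o)=0$, apply Cauchy--Schwarz with the support $V(T)\setminus\{o\}$ rather than $V(T)$, so that $1 - (|V(T)|-1)/|V| = |V(\tilde T)|/|V| \geq \delta$ exactly (no $o(1)$); the paper obtains the same via the equivalent inequality $\Var(h) \geq \pi(h=0)\,|V|^{-1}\sum_v h(v)^2$.
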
 
\begin{proof} For any function $f\colon V(G) \rightarrow \R$, we set
\begin{equation}
f_T(v):= (f(v)-f(o)) 1_{v \in V(T)}, \qquad  f_{\tilde{T}}(v):= (f(v)-f(o)) 1_{v \in V(\tilde{T})}
\end{equation} for all $v \in V(G)$. Using the definition of $\Var(f)$, we see that
\begin{equation}\label{eq:SplitDirichlet}
\frac{\mathcal{E}(f)}{\Var(f)} \geq \frac{\mathcal{E}(f_T)+ \mathcal{E}( f_{\tilde{T}})}{\bar{f}_T+ \bar{f}_{\tilde{T}}} \geq \min\left\{ \frac{\mathcal{E}(f_T)}{\bar{f}_T} , \frac{\mathcal{E}( f_{\tilde{T}})}{ \bar{f}_{\tilde{T}}} \right\}
\end{equation} where we set
\begin{equation}
 \bar{f}_{T} := \frac{1}{|V(G)|} \sum_{v\in V(T) } (f_T(v))^2 ,\qquad  \bar{f}_{\tilde T} := \frac{1}{|V(G)|} \sum_{v\in V(\tilde T) } (f_{\tilde{T}}(v))^2 \ . 
\end{equation} We set $g_T(e):=f_T(e_{+})-f_T(e_{-})$ for $e\in E(T)$ and $g_{\tilde{T}}(e):=f_{\tilde{T}}(e_{+})-f_{\tilde{T}}(e_{-})$ for $e\in E({\tilde{T}})$. The inequalities \eqref{eq:HardyTree1} and  \eqref{eq:HardyTree2} applied to $g_T$ and $g_{\tilde{T}}$ yield 
$\mathcal{E}(f_T)\geq \frac{1}{A}\bar{f}_T $ and $\mathcal{E}( f_{\tilde{T}})  \geq \frac{1}{A}\bar{f}_{\tilde{T}}$. Now, we use Lemma \ref{lem:Rayleigh} and \eqref{eq:SplitDirichlet} to conclude the lower bound on $\lambda$ which is claimed in \eqref{eq:RelationAtoSpectralGap}. \\

For the corresponding upper bound on $\lambda$, we note that the minimizer $g^{\ast}$ for $A$ is given by \eqref{eq:HardyTree1} with equality and satisfies $g^{\ast}(e) \geq 0$ for all $e \in E(T)$ (consider $|g^{\ast}|$ otherwise to see that this has to be the case). Define a function $h:V(G) \rightarrow \R$ by
\begin{equation}
h(v) := 1_{v\in V(T)}\sum_{e\in \ell(v)} g^{\ast}(e) \ .
\end{equation} Recall that $\pi$ is the uniform measure on $V(G)$ and use the Paley–Zygmund inequality to see that
\begin{equation*}
\Var(h) \geq (1- \pi(h>0))\frac{1}{|V(G)|} \sum_{v \in V(G)} h(v)^2 
\end{equation*}
and hence, we must have 
\begin{equation*}
\Var(h) \geq    \pi(h=0) \mathcal{E}(h)A  \geq \delta A\mathcal{E}(h) \ .
\end{equation*} Using the characterization of $\lambda$ in Lemma \ref{lem:Rayleigh}, this concludes the proof.
\end{proof}

 We will use Lemma \ref{lem:EquivalentHardy} in Sections \ref{sec:BoundRelaxation} and \ref{sec:CriterionMiclo} in order to obtain upper and lower bounds on the spectral gap, respectively.

 \subsection{A bound on the relaxation time by weighted paths} \label{sec:WeightedPaths}
 
 Next, we give an estimate which allows us to achieve upper bounds on the relaxation time for the simple random walk on trees. Note that this bound was obtained already in \cite{LMW:SpectralGapTrees} in a more general setup, including a corresponding lower bound of the same form within a factor of $2$. For the convenience of the reader, we provide a short proof for our special case of the simple random walk.
Recall \eqref{relax}.
\begin{proposition} \label{pro:UpperBoundRelaxationTime} Let $(a_e)_{e \in E(G)}$ be any family of positive edge weights for the tree $G$. For any choice of the $(a_e)$'s, we have that
\begin{equation}\label{eq:UpperBoundRelax}
t_{\textup{rel}} \leq \max_{e \in E(G)}a_e^{-1}\sum_{v \in T_e} \sum_{\tilde{e} \in \ell(v)} a_{\tilde{e}} \ .
\end{equation} 
\end{proposition}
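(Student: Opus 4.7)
The plan is to reduce the bound on $t_{\textup{rel}}$ to the Hardy constant $A$ appearing in Lemma \ref{lem:EquivalentHardy}, and then control $A$ by a weighted Cauchy--Schwarz argument using the weights $(a_e)$. Since $\lambda \geq 1/A$ by Lemma \ref{lem:EquivalentHardy}, it suffices to show that the constant
\[
C := \max_{e\in E(G)} a_e^{-1}\sum_{v\in V(T_e)}\sum_{\tilde e\in\ell(v)} a_{\tilde e}
\]
is a valid value for $A$, i.e.\ that inequalities \eqref{eq:HardyTree1} and \eqref{eq:HardyTree2} both hold with $A$ replaced by $C$, where $T$ and $\tilde T$ are the two subtrees attached at the $\delta$-center of mass $o$.

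First I would deal with the inequality on $T$ (the argument on $\tilde T$ is identical, and note that for $e\in E(T)$ or $e\in E(\tilde T)$ the descendant subtree $T_e$ and path $\ell(v)$ computed within $T$ or $\tilde T$ coincide with those in $G$, because $T$ and $\tilde T$ share only the vertex $o$). For any $g\colon E(T)\to\R$, I would apply the Cauchy--Schwarz inequality in the form
\[
\Bigl(\sum_{e\in\ell(v)} g(e)\Bigr)^{2} = \Bigl(\sum_{e\in\ell(v)} \sqrt{a_e}\,\tfrac{g(e)}{\sqrt{a_e}}\Bigr)^{2} \leq \Bigl(\sum_{e\in\ell(v)} a_e\Bigr)\Bigl(\sum_{e\in\ell(v)} \tfrac{g(e)^{2}}{a_e}\Bigr).
\]
Writing $B(v):=\sum_{e\in\ell(v)} a_e$ and summing over $v\in V(T)$, I would then swap the order of summation. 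Using that $\{v:e\in\ell(v)\}=V(T_e)$ yields
\[
\sum_{v\in V(T)} B(v)\sum_{e\in\ell(v)} \tfrac{g(e)^{2}}{a_e} = \sum_{e\in E(T)} \tfrac{g(e)^{2}}{a_e}\sum_{v\in V(T_e)} B(v) = \sum_{e\in E(T)} g(e)^{2}\cdot a_e^{-1}\sum_{v\in V(T_e)}\sum_{\tilde e\in\ell(v)} a_{\tilde e},
\]
and bounding the last factor by its maximum over $E(T)\subseteq E(G)$ gives \eqref{eq:HardyTree1} with constant $C$. The identical argument on $\tilde T$ gives \eqref{eq:HardyTree2} with the same $C$, so $A\leq C$ and therefore $t_{\textup{rel}}=1/\lambda\leq A\leq C$, which is the claim.

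This proof is mostly routine, so there is no deep obstacle; the one place where some care is needed is the passage between the subtree $T$ (resp.\ $\tilde T$) coming from the Hardy characterization and the ambient tree $G$, because the statement of the proposition uses the maximum over all $e\in E(G)$ while Lemma \ref{lem:EquivalentHardy} requires a single constant $A$ that works simultaneously on both halves. This is resolved by the observation above that $\ell(v)$ and $T_e$ are unaffected by restricting to $T$ or $\tilde T$, so that bounding each half by the global maximum gives a common admissible value of $A$.
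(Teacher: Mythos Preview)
Your proof is correct and follows essentially the same approach as the paper: reduce to the Hardy constant $A$ via Lemma~\ref{lem:EquivalentHardy}, apply the weighted Cauchy--Schwarz inequality to $\bigl(\sum_{e\in\ell(v)} g(e)\bigr)^2$, interchange the order of summation using $\{v:e\in\ell(v)\}=V(T_e)$, and bound by the maximum over $e$. Your explicit remark that $\ell(v)$ and $T_e$ are unchanged when restricting to $T$ or $\tilde T$ is a useful clarification that the paper leaves implicit.
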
 
\begin{proof} Recall the trees $T,\tilde{T}$ for $G$ from \eqref{def:CenterOfMass}. By Lemma \ref{lem:Rayleigh} and Lemma \ref{lem:EquivalentHardy}, if we have
for some constant $C$ that
\begin{equation}\label{eq:forsomeC}
\sum_{v\in V(T)} \Big(\sum_{e \in \ell(v)} g(e)\Big)^2 \leq C  \sum_{e \in E(T)} g(e)^2
\end{equation} for all functions $g \colon E(T) \rightarrow \R$, and a similar statement with respect to $\tilde{T}$, we conclude $t_{\textup{rel}} \leq C$. In the following, we only show \eqref{eq:forsomeC} with respect to the tree $T$. Fix positive edge weights $a_e$ and note that by the Cauchy-Schwarz inequality,
\begin{equation*}
\sum_{v\in V(T)} \Big(\sum_{e \in \ell(v)} g(e)\Big)^2 \leq  \sum_{v\in V(T)}  \Big( \sum_{\tilde{e} \in \ell(v)} a_{\tilde{e}}\Big) \sum_{e \in \ell(v)} a_e^{-1}g(e)^2 \ .
\end{equation*} Rearranging the sum according to the edges and assuming without loss of generality that $g$ is non-negative, we get 
\begin{align*}
\sum_{v\in V(T)} \Big(\sum_{e \in \ell(v)} g(e)\Big)^2 \leq \sum_{e \in E(T)} \Big( \sum_{v \in T_{e}}   \sum_{\tilde{e} \in \ell(v) } a_{\tilde{e}} \Big) a^{-1}_{e}  g(e)^2 \, .
\end{align*}
Hence, for any choice of $(a_e)$, 
we can take the right-hand side of \eqref{eq:UpperBoundRelax} as $C$ in \eqref{eq:forsomeC}.
\end{proof}  Note that we can choose positive weights $(a_e)$ in any particular way to obtain an upper bound. In the following, we present three special cases of $(a_e)$'s and the respective bounds on the relaxation time. We start with the case, where we set $a_e=\frac{1}{|e|}$ in Proposition \ref{pro:UpperBoundRelaxationTime} to obtain the following bound.
\begin{corollary} \label{cor:SimpleBound} We have that
\begin{equation}
t_{\textup{rel}} \leq (\log(\textup{diam}(G))+1) \max_{e\in E} |e| |T_e|
\end{equation} where $\textup{diam}(G)$ denotes the diameter of the tree $G$.
\end{corollary}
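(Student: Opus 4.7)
The plan is to apply Proposition \ref{pro:UpperBoundRelaxationTime} with the specific choice of edge weights $a_e = 1/|e|$ and then bound the resulting double sum by comparing it to a harmonic sum.

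First, I would plug $a_e = |e|^{-1}$ into \eqref{eq:UpperBoundRelax}. This substitutes $a_e^{-1} = |e|$ in front and, for each $v \in T_e$, replaces the inner sum by $\sum_{\tilde{e} \in \ell(v)} |\tilde{e}|^{-1}$. The edges of $\ell(v)$ have levels $1, 2, \dots, |v|$, so this inner sum is exactly the harmonic sum $\sum_{k=1}^{|v|} 1/k$, which is bounded above by $\log(|v|) + 1$.

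Next, I would observe that every $v$ appearing in the bound lies in some subtree $T_e$, so $|v| \leq \textup{diam}(G)$, giving the uniform bound
\begin{equation*}
\sum_{\tilde{e} \in \ell(v)} |\tilde{e}|^{-1} \leq \log(\textup{diam}(G)) + 1 \ .
\end{equation*}
Substituting this back yields
\begin{equation*}
t_{\textup{rel}} \leq \max_{e \in E} |e| \sum_{v \in T_e} \bigl(\log(\textup{diam}(G)) + 1\bigr) = \bigl(\log(\textup{diam}(G)) + 1\bigr)\max_{e \in E} |e|\,|T_e| \ ,
\end{equation*}
which is the desired inequality.

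There is no real obstacle here; the only thing to double-check is that the weights $a_e = 1/|e|$ are admissible in Proposition \ref{pro:UpperBoundRelaxationTime} (they are positive for every edge since $|e| \geq 1$) and that the harmonic bound $\sum_{k=1}^{m} 1/k \leq \log m + 1$ is applied with the correct interpretation of $|e|$ as the graph distance from $o$ to $e_+$. Everything else is a direct unwinding of the choice of weights in the general estimate.
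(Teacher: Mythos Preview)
Your proof is correct and follows exactly the approach indicated in the paper, which simply says to set $a_e = 1/|e|$ in Proposition \ref{pro:UpperBoundRelaxationTime}; you have just filled in the routine harmonic-sum estimate that the paper leaves implicit.
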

In Proposition \ref{pro:LowerBoundRelaxationTime}, we give a corresponding lower bound on the relaxation time, but without the additional factor of $\log(\textup{diam}(G))$. Next, consider $a_e=\frac{1}{f(|e|)}$ for some function $f \colon \N \rightarrow \R$, whose reciprocals are summable. We obtain the following bound as a consequence of Proposition \ref{pro:UpperBoundRelaxationTime}.
 \begin{corollary} \label{cor:SimpleBound2} Suppose that we have some function $f$ on the integers with 
 \begin{equation*}
 \sum_{n \in \N} f(n)^{-1} = C < \infty
 \end{equation*} Then we have that
\begin{equation}
t_{\textup{rel}} \leq C \max_{e\in E} f(|e|) |T_e| \ .
\end{equation} 
\end{corollary}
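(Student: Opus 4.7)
The plan is to apply Proposition \ref{pro:UpperBoundRelaxationTime} directly, choosing the edge weights $a_e := 1/f(|e|)$ for every $e \in E(G)$. The summability assumption $\sum_{n \in \N} f(n)^{-1} = C < \infty$ forces $f(n) > 0$ for all $n \in \N$, so this is a legitimate family of positive edge weights.

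With this choice we have $a_e^{-1} = f(|e|)$. For any vertex $v$, the edges $\tilde{e} \in \ell(v)$ lie at distances $1, 2, \dots, |v|$ from the root, with exactly one edge at each such distance, so
\begin{equation*}
\sum_{\tilde{e} \in \ell(v)} a_{\tilde{e}} \;=\; \sum_{k=1}^{|v|} \frac{1}{f(k)} \;\leq\; \sum_{n \in \N} f(n)^{-1} \;=\; C.
\end{equation*}
Summing this uniform bound over the vertices of $T_e$ gives $\sum_{v \in T_e} \sum_{\tilde{e} \in \ell(v)} a_{\tilde{e}} \leq C |T_e|$. Substituting into \eqref{eq:UpperBoundRelax} yields
\begin{equation*}
t_{\textup{rel}} \;\leq\; \max_{e \in E(G)} f(|e|) \cdot C |T_e| \;=\; C \max_{e \in E} f(|e|)|T_e|,
\end{equation*}
which is the claimed inequality.

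There is no genuine obstacle here beyond bookkeeping; the content of the corollary is entirely absorbed into the choice of weights, and the only tool used is the summability hypothesis to bound the inner path sum uniformly in $v$. One should only double-check that $\ell(v)$ meets each distance level exactly once, which holds because $G$ is a tree and $|\tilde{e}|$ was defined as the depth of the deeper endpoint, so no two edges on a root-to-$v$ path share the same value of $|\tilde{e}|$.
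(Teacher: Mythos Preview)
Your proof is correct and follows exactly the approach the paper indicates: set $a_e = 1/f(|e|)$ in Proposition~\ref{pro:UpperBoundRelaxationTime} and bound the inner path sum by $C$ using the summability hypothesis. There is nothing to add.
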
  Next, we have the following bound by choosing the weights $a_e=|T_e|$ for all $e\in E(T)$ proportional to size of the tree $T_e$ attached tree to $e$.
\begin{corollary}  \label{cor:SimpleBound3} It holds that
\begin{equation}\label{eq:SimpleBound3}
t_{\textup{rel}} \leq  \max_{v \in V} \sum_{e \in \ell(v)} |T_e| \ .
\end{equation} 
\end{corollary}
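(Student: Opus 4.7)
The plan is to apply Proposition \ref{pro:UpperBoundRelaxationTime} directly with the weights $a_e := |T_e|$ for every $e \in E(G)$, and then bound the resulting expression by a trivial maximum. This is the third specialisation (analogous to the two already given in Corollaries \ref{cor:SimpleBound} and \ref{cor:SimpleBound2}), so no new inequalities beyond Proposition \ref{pro:UpperBoundRelaxationTime} are needed.

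Concretely, I would first write out the right-hand side of \eqref{eq:UpperBoundRelax} with this choice of weights, namely
\begin{equation*}
t_{\textup{rel}} \leq \max_{e\in E(G)} \frac{1}{|T_e|} \sum_{v \in T_e} \sum_{\tilde{e} \in \ell(v)} |T_{\tilde{e}}| \, .
\end{equation*}
Next I would simply remark that for every $v \in T_e$ (and in fact for every $v \in V(G)$) the inner sum is bounded by the maximum over all vertices, that is
\begin{equation*}
\sum_{\tilde{e} \in \ell(v)} |T_{\tilde{e}}| \leq \max_{u \in V(G)} \sum_{e \in \ell(u)} |T_{e}| \, .
\end{equation*}
Plugging this pointwise bound into the previous display, the outer sum over $v \in T_e$ contributes exactly $|T_e|$ copies of the right-hand side, which then cancels the prefactor $|T_e|^{-1}$. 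The remaining expression no longer depends on the edge $e$ over which the outer maximum is taken, and equals precisely the claimed bound \eqref{eq:SimpleBound3}.

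There is no real obstacle here; the subtlety, if any, lies solely in verifying that the choice $a_e = |T_e|$ is legal (the weights are positive because $|T_e| \geq 1$, as $e_+ \in T_e$), so that Proposition \ref{pro:UpperBoundRelaxationTime} genuinely applies.
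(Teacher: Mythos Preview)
Your proposal is correct and follows exactly the paper's own argument: choose $a_e = |T_e|$ in Proposition~\ref{pro:UpperBoundRelaxationTime}, bound the inner sum by $\max_{u\in V}\sum_{e\in\ell(u)}|T_e|$, and cancel the resulting factor of $|T_e|$ against the prefactor.
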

This follows directly from Proposition \ref{pro:UpperBoundRelaxationTime} noting that
\begin{equation*}
t_{\textup{rel}} \leq \max_{e \in E} \frac{1}{|T_e|} \sum_{v \in T_{e}} \sum_{\tilde{e} \in \ell(v)} |T_{\tilde{e}}| \leq  \max_{e \in E} \frac{1}{|T_e|} \sum_{v \in T_{e}} \max_{\tilde{v} \in V} \sum_{\tilde{e} \in \ell(\tilde{v})} |T_{\tilde{e}}| =  \max_{v \in V} \sum_{e \in \ell(v)} |T_e|   \ .
\end{equation*}
We will see in the upcoming section that the right-hand side in \eqref{eq:SimpleBound3} gives also an upper bound on the mixing time for the simple random walk.

\section{Some facts about the mixing time on trees}  \label{sec:PrelimMixing}

In this section, we discuss mixing time estimates for the simple random walk $(\eta_t)_{t\geq 0}$ on a tree $G=(V,E,o)$. The main result presented in this section is that the $\varepsilon$-mixing time can be bounded in terms of the hitting time of the root $o$. For sites $x,y \in V$, let
\begin{equation}
\tau_{\textup{hit}}(x) := \inf \left\{ t \geq 0 \colon \eta_t = x \right\}
\end{equation} be the \textbf{hitting time} of $x$ and let $\E_{y}[\tau_{\textup{hit}}(x)]$ denote the expected hitting time of $x$ when starting the random walk from $y$. The following proposition gives an upper bound on the $\varepsilon$-mixing time.
 \begin{proposition} \label{pro:HittingtimeUpper} 
 Let $t_{\textup{mix}}(\varepsilon)$ be the $\varepsilon$-mixing time of $(\eta_t)_{t\geq 0}$. There is a universal constant $C>0$  such that we have for all $\varepsilon \in (0,\frac{1}{2})$
 \begin{equation}\label{eq:hittingTimeEstimateUpper}
t_{\textup{mix}}(\varepsilon) \leq  C \log(\varepsilon^{-1})\Big(1+ \max_{v \in V} \E_{v}[\tau_{\textup{hit}}(o)]\Big)  \leq   2C \log(\varepsilon^{-1})\max_{v \in V} \sum_{e \in \ell(v)} |T_e| .
 \end{equation} 
In particular,
\begin{equation}\label{mixdiam}
t_{\textup{mix}}(\varepsilon)  \leq 2C \log(\varepsilon^{-1})|V|\textup{diam}(G).
\end{equation}
 \end{proposition}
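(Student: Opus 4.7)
The plan is to establish \eqref{eq:hittingTimeEstimateUpper} by first reducing $t_{\textup{mix}}(\varepsilon)$ to the expected hitting time of the root through a classical reversibility argument, and then computing this hitting time explicitly in terms of the tree geometry.

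First, by the submultiplicativity of the total variation distance for reversible Markov chains (see Lemma 4.12 in \cite{LPW:markov-mixing}), one has $t_{\textup{mix}}(\varepsilon) \leq \lceil \log_2(\varepsilon^{-1}) \rceil \cdot t_{\textup{mix}}(1/4)$, so it suffices to bound $t_{\textup{mix}}(1/4)$. For this I would invoke the classical bound, due to Aldous and sharpened by Peres--Sousi and Oliveira, that for a reversible chain
\[
t_{\textup{mix}}(1/4) \leq C_0 \cdot t_H, \qquad t_H := \max_{x \in V,\, A \subseteq V : \pi(A) \geq 1/2} \E_x[\tau_A],
\]
with $C_0$ a universal constant. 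Since the root $o$ is a $\delta$-center of mass, we have $V(G) = V(T) \cup V(\tilde T)$ with $V(T) \cap V(\tilde T) = \{o\}$ and $\pi(T) + \pi(\tilde T) \geq 1$; so, after possibly swapping names, $A := V(T)$ satisfies $\pi(A) \geq 1/2$. Because $G$ is a tree, the unique path from any $x \notin A$ to $A$ passes through $o$, whence $\tau_A = \tau_{\textup{hit}}(o)$ for every such $x$. This yields $t_H \leq \max_v \E_v[\tau_{\textup{hit}}(o)]$, and chaining the above bounds gives the first inequality of \eqref{eq:hittingTimeEstimateUpper}, the additive $1$ absorbing the trivial case of very small hitting times.

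Next, I would derive the explicit identity
\[
\E_v[\tau_{\textup{hit}}(o)] = \sum_{e \in \ell(v)} |T_e|
\]
for the VSRW on any finite rooted tree. Setting $f(x) := \E_x[\tau_{\textup{hit}}(o)]$, one has $f(o) = 0$ and $\mathcal{L}f(x) = -1$ for all $x \neq o$. Writing $g_e := f(e_+) - f(e_-)$ for each $e \in E$, the equation $\mathcal{L}f(x) = -1$ at the non-root vertex $x = e_+$ rewrites as
\[
g_e = 1 + \sum_{e' \in E :\, e'_- = e_+} g_{e'},
\]
which, solved by induction from the leaves inward, gives $g_e = |T_e|$ for every $e \in E$. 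Summing $g_e$ along $\ell(v)$ then yields the formula for $f(v)$. Since $\max_v \E_v[\tau_{\textup{hit}}(o)] \geq 1$ for any nontrivial tree (apply the formula to a neighbor of $o$), we obtain $1 + \max_v \E_v[\tau_{\textup{hit}}(o)] \leq 2 \max_v \sum_{e \in \ell(v)} |T_e|$, giving the second inequality of \eqref{eq:hittingTimeEstimateUpper}.

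Finally, \eqref{mixdiam} follows at once from $|T_e| \leq |V|$ for every edge $e$ and $|\ell(v)| \leq \textup{diam}(G)$ for every $v \in V$. The main obstacle is the first step: cleanly importing the reversible-chain inequality relating the mixing time to the maximum hitting time of sets of $\pi$-measure at least $1/2$, since this tool is not stated in the excerpt. Once that is in place, the tree structure combined with the $\delta$-center-of-mass assumption makes the reduction to $\max_v \E_v[\tau_{\textup{hit}}(o)]$ transparent, and the harmonic-function computation on the tree is then routine.
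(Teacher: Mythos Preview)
Your argument for the first inequality has a genuine gap. You invoke the Peres--Sousi/Oliveira bound $t_{\textup{mix}}(1/4)\le C_0\,t_H$ with $t_H=\max_{x}\max_{A:\pi(A)\ge 1/2}\E_x[\tau_A]$, and then produce \emph{one} set $A_0=V(T)$ (using that $o$ is a $\delta$-center of mass) for which $\max_x\E_x[\tau_{A_0}]\le\max_v\E_v[\tau_{\textup{hit}}(o)]$. But $t_H$ is a \emph{maximum} over all large sets, so controlling a single $A_0$ does not bound $t_H$; your sentence ``This yields $t_H\le\max_v\E_v[\tau_{\textup{hit}}(o)]$'' does not follow. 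Indeed, for a general large set $A$ there is no reason the path from $x$ to $A$ should pass through $o$. The paper sidesteps this entirely: it cites directly (Aldous \cite{A:MixingHitting}, together with \cite[Theorem~20.3]{LPW:markov-mixing} and \cite[Lemma~5.2]{PS:MixingHitting}) the inequality $t_{\textup{mix}}(1/4)\le C\bigl(1+\max_v\E_v[\tau_{\textup{hit}}(o)]\bigr)$, valid for an \emph{arbitrary} vertex $o$ --- in particular without invoking the center-of-mass property, as the paper explicitly remarks after the proof.

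By contrast, your second step is not only correct but cleaner than the paper's. You solve $\mathcal Lf=-1$ on $V\setminus\{o\}$ directly and obtain the exact identity $\E_v[\tau_{\textup{hit}}(o)]=\sum_{e\in\ell(v)}|T_e|$ for the variable-speed walk, via the recursion $g_e=1+\sum_{e':e'_-=e_+}g_{e'}$ which gives $g_e=|T_e|$. The paper instead bounds the continuous-time hitting time by the discrete-time one and uses the degree formula \eqref{eq:DiscreteSRW} together with $\sum_{x}\deg(x)=2|V|-2$, picking up an extra factor of~$2$; your harmonic-function computation avoids that loss. The derivation of \eqref{mixdiam} is the same in both.
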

 \begin{proof} Note that the first inequality in  \eqref{eq:hittingTimeEstimateUpper} is immediate from Theorem 5 in  \cite{A:MixingHitting} together with Theorem 20.3 in \cite{LPW:markov-mixing} and Lemma 5.2 of \cite{PS:MixingHitting}. To see that the second inequality holds, we claim that for two adjacent sites $v,w \in V$ with $|v|<|w|$, we have 
 \begin{equation}\label{eq:HittingDegree}
 \E_{w}[\tau_{\textup{hit}}(v)] \leq  \sum_{x \in T_w} \deg(x) -1 \ . 
 \end{equation} This follows from the fact that for the embedded discrete-time simple random walk 
\begin{equation}\label{eq:DiscreteSRW}
 \E_{w}[\tau_{\textup{hit}}(v)] =  \sum_{x \in T_w} \deg(x) -1  \ ,
 \end{equation} 
and a time-change argument, see Section 2.3 in \cite{B:Center}. Since for any tree $\tilde{G}$, we have
 \begin{equation}\label{eq:SitesToEdges}
 \sum_{x \in V(\tilde{G})} \deg(x) = 2|V(\tilde{G})|-2 \ ,
 \end{equation} this yields
\begin{equation*}
\E_{v}[\tau_{\textup{hit}}(o)] \leq 2 \sum_{e \in \ell(v)} |T_e| \ ,
\end{equation*}
and the second inequality in \eqref{eq:hittingTimeEstimateUpper} follows.
 \end{proof}
 Note that the bound in Proposition \ref{pro:HittingtimeUpper} does not use the assumption that the root is a $\delta$-center of mass. In fact, we see that \eqref{eq:hittingTimeEstimateUpper} holds for an arbitrarily chosen root. Next, we state a lower bound on the mixing time, following the ideas of Lemma 5.4 in \cite{BHP:CutoffTrees}, which does indeed require that the root is a $\delta$-center of mass. 
 \begin{proposition} \label{pro:HittingtimeLower} Assume that the root $o$ is a $\delta$-center of mass for some $\delta>0$. Let $\Delta$ be the maximum degree in $G$. Then for all $\varepsilon\leq\delta$,
 \begin{equation}\label{eq:hittingTimeEstimateLower}
t_{\textup{mix}}\left(\frac{\varepsilon}{2}\right) \geq \frac{\varepsilon}{2} \max_{v \in V}\E_v\left[\tau_{\textup{hit}}(o) \right]  \geq  \frac{\varepsilon}{2 \Delta} \max_{v \in V} \sum_{e \in \ell(v)} |T_e| \ .
 \end{equation}
 \end{proposition}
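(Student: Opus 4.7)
The plan is to prove the two inequalities in \eqref{eq:hittingTimeEstimateLower} separately. For the first, I would take $v^{\ast} \in V$ attaining $\max_v \E_v[\tau_{\textup{hit}}(o)]$; necessarily $v^{\ast} \neq o$. Since $o$ is a $\delta$-center of mass, there exist subtrees $T, \tilde T$ with $V(T) \cap V(\tilde T) = \{o\}$ and $|V(T)|, |V(\tilde T)| \geq \delta |V|$. Without loss of generality, $v^{\ast} \in V(T) \setminus \{o\}$. Setting $A := V(\tilde T)$ gives $\pi(A) \geq \delta$, and since $G$ is a tree any path from $v^{\ast}$ to $A$ must pass through $o$, so $\{\eta_t \in A\} \subseteq \{\tau_{\textup{hit}}(o) \leq t\}$. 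Consequently,
\begin{equation*}
\TV{\P_{v^{\ast}}(\eta_t \in \cdot) - \pi} \geq \pi(A) - \P_{v^{\ast}}(\eta_t \in A) \geq \delta - \P_{v^{\ast}}(\tau_{\textup{hit}}(o) \leq t) \ .
\end{equation*}

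I would then establish the Markov-type bound $\P_{v^{\ast}}(\tau_{\textup{hit}}(o) \leq t) \leq t/\E_{v^{\ast}}[\tau_{\textup{hit}}(o)]$ by applying the strong Markov property at time $t$ and using that $v^{\ast}$ realizes the maximum. Specifically, this produces
\begin{equation*}
\E_{v^{\ast}}[\tau_{\textup{hit}}(o)] \leq t + \P_{v^{\ast}}(\tau_{\textup{hit}}(o) > t)\, \E_{v^{\ast}}[\tau_{\textup{hit}}(o)] \ ,
\end{equation*}
which rearranges to the claim. Choosing $t = (\varepsilon/2)\, \E_{v^{\ast}}[\tau_{\textup{hit}}(o)]$ and using $\varepsilon \leq \delta$ gives $\TV{\cdot} \geq \delta - \varepsilon/2 \geq \varepsilon/2$, forcing $t_{\textup{mix}}(\varepsilon/2) \geq t$ and establishing the first inequality.

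For the second inequality, I would select $v^{\ast}$ maximizing $\sum_{e \in \ell(v)} |T_e|$ and decompose the hitting time of $o$ along the geodesic $v^{\ast} = v_k, v_{k-1}, \dots, v_0 = o$: the strong Markov property at each successive hit yields
\begin{equation*}
\E_{v^{\ast}}[\tau_{\textup{hit}}(o)] = \sum_{i=0}^{k-1} \E_{v_{i+1}}[\tau_{\textup{hit}}(v_i)] \ .
\end{equation*}
For each edge $e = \{v_i, v_{i+1}\} \in \ell(v^{\ast})$, the discrete-time identity \eqref{eq:DiscreteSRW} combined with a time-change argument (each visit to $x$ in continuous time takes expected duration $1/\deg(x) \geq 1/\Delta$) yields $\E_{v_{i+1}}[\tau_{\textup{hit}}(v_i)] \geq |T_e|/\Delta$. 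Summing and using $\max_v \E_v[\tau_{\textup{hit}}(o)] \geq \E_{v^{\ast}}[\tau_{\textup{hit}}(o)]$ completes the second inequality.

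The principal obstacle is the Markov-type inequality $\P_{v^{\ast}}(\tau_{\textup{hit}}(o) \leq t) \leq t/\E_{v^{\ast}}[\tau_{\textup{hit}}(o)]$, since the ordinary Markov inequality only controls $\P(\tau > t)$ and thus points in the wrong direction. The resolution hinges on taking $v^{\ast}$ to be a maximizer, so that the strong-Markov conditioning on $\{\tau_{\textup{hit}}(o) > t\}$ bounds the remaining expected hitting time from above by $\E_{v^{\ast}}[\tau_{\textup{hit}}(o)]$ itself. Once this bound is in hand, the rest of the proof is a routine calculation using the hitting time formulas established in the proof of Proposition \ref{pro:HittingtimeUpper}.
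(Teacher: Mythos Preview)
Your proof is correct. For the second inequality you proceed exactly as the paper does, via the discrete-time hitting time identity \eqref{eq:DiscreteSRW} and the $1/\Delta$ time-change comparison.

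For the first inequality, your route differs from the paper's. The paper works directly at the time $t_{\textup{mix}}(\varepsilon/2)$: it shows that from \emph{every} $v \in V(T)$ the walk hits $o$ by time $t_{\textup{mix}}(\varepsilon/2)$ with probability at least $\varepsilon/2$, and then iterates this bound via the Markov property, so that $\tau_{\textup{hit}}(o)/t_{\textup{mix}}(\varepsilon/2)$ is stochastically dominated by a Geometric$(\varepsilon/2)$ variable, yielding $\E_v[\tau_{\textup{hit}}(o)] \leq (2/\varepsilon)\, t_{\textup{mix}}(\varepsilon/2)$. You instead fix the time $t = (\varepsilon/2)\,\E_{v^{\ast}}[\tau_{\textup{hit}}(o)]$ and bound the total-variation distance from below at that time, using the reverse-Markov bound $\P_{v^{\ast}}(\tau_{\textup{hit}}(o) \leq t) \leq t/\E_{v^{\ast}}[\tau_{\textup{hit}}(o)]$, which holds precisely because $v^{\ast}$ is a maximizer. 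Both arguments are short and standard; the paper's geometric iteration is slightly more robust in that it yields the hitting-time bound uniformly over all starting points $v$ (not only the maximizer), while your argument is more direct since it avoids the iteration altogether and goes straight to a lower bound on $d(t)$.
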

 \begin{proof}
Let $v \in V$ with $v \neq o$ be fixed and recall that $\pi$ denotes the uniform distribution on $V(G)$. Moreover, recall the trees $T$ and $\tilde{T}$ in \eqref{def:CenterOfMass} from the definition of the $\delta$-center of mass and assume that $v \in V(T)$. We now claim that
\begin{equation}\label{eq:GeometricBound}
\P_v\left(\tau_o \leq t_{\textup{mix}}\left({\varepsilon}/{2}\right) \right) \geq \P_v\Big( \eta_{t_{\textup{mix}}({\varepsilon}/{2})} \in V(\tilde{T})\Big) \geq \pi(V(\tilde{T})) - \frac{\varepsilon}{2} \geq \frac{\varepsilon}{2}\, .
\end{equation} 
The first inequality in \eqref{eq:GeometricBound} follows since $\eta_{t_{\textup{mix}}({\varepsilon}/{2})} \in V(\tilde{T})$ implies that the root was visited before $t_{\textup{mix}}\left({\varepsilon}/{2}\right)$, the second inequality uses the definition of the mixing time and the third inequality follows since $\pi(V(\tilde{T}))\geq  \delta \geq \varepsilon$.
In words,  \eqref{eq:GeometricBound} says that with probability at least $\varepsilon/2$, the random walk hits the root until time $t_{\textup{mix}}(\varepsilon/2)$. Using the Markov property of $(\eta_t)_{t \geq 0}$, since \eqref{eq:GeometricBound} holds for any $v\in V(T)$,
we can iterate \eqref{eq:GeometricBound} to get that the probability of hitting the root for the first time until time 
$k \cdot t_{\textup{mix}}(\varepsilon/2)$ is at least the probability that a
Geometric-$(\varepsilon/2)$-random variable takes a value $\leq k$. This yields
\begin{equation*}
\E_v\left[\tau_{\textup{hit}}(o) \right] 
\leq \frac{2}{\varepsilon} t_{\textup{mix}}\left(\frac{\varepsilon}{2}\right)\, .
\end{equation*} Since $v\in T$ was arbitrary, we obtain the first inequality in \eqref{eq:hittingTimeEstimateLower}. For the second inequality, recall \eqref{eq:DiscreteSRW} and \eqref{eq:SitesToEdges}, and use a comparison with a discrete-time simple random walk which is sped up by a factor of $\Delta$.
 \end{proof} So far, we gathered techniques in order to give upper and lower bounds on the mixing and relaxation time. The next seminal result by Basu et al.\ gives a characterization of cutoff for the simple random walks on trees, see \cite{BHP:CutoffTrees}.
 
\begin{lemma}[c.f.\ Theorem 1 in \cite{BHP:CutoffTrees}] \label{lem:ProductCriterion} Fix $\varepsilon \in (0,1)$. Let $(G_n)_{n \geq 1}$ be a family of trees with $\varepsilon$-mixing times $(t^n_{\textup{mix}})$ and relaxation times $(t^n_{\textup{rel}})$ for the simple random walk on $(G_n)$, respectively. The simple random walk on $(G_n)$ exhibits cutoff if and only if
\begin{equation}\label{seminalcomp}
t^n_{\textup{rel}} \ll t^n_{\textup{mix}} \ .
\end{equation} In other words, we see cutoff whenever then product criterion  \eqref{eq:ProductCriterionIntro} holds.
\end{lemma}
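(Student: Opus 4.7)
The proof of Lemma \ref{lem:ProductCriterion} splits into two directions. The necessity, namely that cutoff forces $t^n_{\textup{rel}} \ll t^n_{\textup{mix}}$, is a general fact about reversible Markov chains and does not exploit the tree structure: the spectral decomposition yields
\[
t_{\textup{mix}}(\varepsilon) - t_{\textup{mix}}(1-\varepsilon) \leq C\, t_{\textup{rel}} \log(\varepsilon^{-1}),
\]
see Chapter 12 of \cite{LPW:markov-mixing}. Together with the cutoff definition \eqref{def:Cutoff}, this forces $t^n_{\textup{rel}} = o(t^n_{\textup{mix}})$ for any fixed $\varepsilon \in (0,1/2)$, so I would dispatch this direction first in one short paragraph.

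The substantive direction is sufficiency, and my plan is to reduce cutoff to a concentration-of-hitting-times statement. By Propositions \ref{pro:HittingtimeUpper} and \ref{pro:HittingtimeLower} we have
\[
t^n_{\textup{mix}} \asymp \max_{v \in V(G_n)} \E_v[\tau_{\textup{hit}}(o_n)],
\]
so if $v^{\ast}_n$ denotes a vertex attaining this maximum, it suffices to show that $\tau_{\textup{hit}}(o_n)$ under $\P_{v^{\ast}_n}$ concentrates around its mean with fluctuations of order $o(t^n_{\textup{mix}})$. By Chebyshev's inequality this would follow from a variance estimate of the form
\[
\V_{v^{\ast}_n}\bigl(\tau_{\textup{hit}}(o_n)\bigr) \leq C\, t^n_{\textup{rel}} \cdot \E_{v^{\ast}_n}\bigl[\tau_{\textup{hit}}(o_n)\bigr],
\]
since under the product criterion the right-hand side is then $o((t^n_{\textup{mix}})^2)$. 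Finally, feeding this concentration back into the hitting-time/mixing-time comparison together with the $\delta$-center-of-mass assumption on $o_n$ yields cutoff at the common scale $\max_v \E_v[\tau_{\textup{hit}}(o_n)]$.

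The main obstacle is establishing the above variance bound, because on a general tree $\tau_{\textup{hit}}(o_n)$ does not split into independent edge-crossing times in the way it does for birth-and-death chains. I would attack this by projecting the random walk onto the path $\ell(v^{\ast}_n)$: the projected chain is a birth-and-death chain whose relaxation time is controlled by $t^n_{\textup{rel}}$ (by the Dirichlet form characterization of Lemma \ref{lem:Rayleigh}, restricting test functions to depend only on the projected coordinate can only increase the Rayleigh quotient), so the variance bound of Ding--Lubetzky--Peres for birth-and-death chains in \cite{DLP:BirthDeath} applies to it. The delicate part is showing that the excursions of the walk off the path $\ell(v^{\ast}_n)$ into the hanging subtrees $\bar{T}_i$ contribute only lower-order variance; I would control these via the Hardy-type inequality of Lemma \ref{lem:EquivalentHardy} applied to each $\bar{T}_i$ (whose spectral gap is no smaller than that of $G_n$), summing the resulting contributions against the expected number of excursions, which in turn is governed by $\E_{v^{\ast}_n}[\tau_{\textup{hit}}(o_n)]$. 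This subtree-excursion control is where the tree structure is genuinely used and is the technical heart of the argument.
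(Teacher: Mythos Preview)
The paper does not prove Lemma~\ref{lem:ProductCriterion} at all: it is quoted verbatim from Basu--Hermon--Peres (hence the attribution ``c.f.\ Theorem~1 in \cite{BHP:CutoffTrees}'') and used as a black box throughout Sections~\ref{sec:Proofs} and~\ref{sec:CutoffAtypical}. There is therefore no in-paper argument to compare your proposal against; you have written a proof sketch for a result the authors deliberately import.

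Your outline is in the spirit of the actual argument in \cite{BHP:CutoffTrees}, which also passes through concentration of hitting times, but one step as written does not hold. Projecting the simple random walk on $G_n$ onto the segment $\ell(v^{\ast}_n)$ does \emph{not} produce a birth-and-death chain: the time the walk spends in each hanging subtree $\bar{T}_i$ before returning to the path is not exponentially distributed, so the projected process is only semi-Markov. In particular, you cannot invoke the variance bound of \cite{DLP:BirthDeath} directly on the projection, nor does the Rayleigh-quotient monotonicity of Lemma~\ref{lem:Rayleigh} give you control of the relaxation time of a non-Markovian object. The genuine argument in \cite{BHP:CutoffTrees} circumvents this by working with hitting times of \emph{large sets} (rather than of the single root) and establishing the needed concentration via a more delicate decomposition; your excursion-control idea is morally related but would need to be reformulated to avoid the false Markov-projection claim.
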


\begin{remark} \label{rem:StrategyCutoff}
In view of  Corollary \ref{cor:SimpleBound3}, Proposition \ref{pro:HittingtimeLower} and Lemma \ref{lem:ProductCriterion}, we aim at verifying cutoff by giving weights $(a_e)$ in Proposition \ref{pro:UpperBoundRelaxationTime}, which lead to a strictly improved bound over the weights $a_e = |T_e|$.
\end{remark}

\section{Proof of the main criteria for cutoff} \label{sec:Proofs}

We now use the preliminary bounds from the previous two sections in order to establish our main criteria on cutoff for the simple random walk on trees, i.e.\ we prove Theorem \ref{thm:NoCutoff}, Theorem \ref{thm:CutoffCompactification}, and Theorem \ref{thm:CutoffMiclo}.

\subsection{A general lower bound on the relaxation time} \label{sec:BoundRelaxation}

We start with a lower bound on the relaxation time. Recall that for a family of graphs $(G_n)$, the root is chosen to be a $\delta$-center of mass for some fixed $\delta>0$. 
 
\begin{proposition} \label{pro:LowerBoundRelaxationTime} The constant $A$ from Lemma \ref{lem:EquivalentHardy} satisfies
\begin{equation}\label{eq:LowerBoundRelax}
A \geq \max_{e \in E(G)} |e| |T_e| \ .
\end{equation} In particular, Lemma \ref{lem:EquivalentHardy} implies 
that 
\begin{equation}
t_{\textup{rel}}\geq \delta \max_{e \in E(G)} |e| |T_e|\, .
\end{equation}
\end{proposition}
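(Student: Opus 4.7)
The plan is to exhibit a test function that witnesses the lower bound in the Hardy inequality characterization of Lemma \ref{lem:EquivalentHardy}. Let $e^{\ast} \in E(G)$ be an edge that achieves the maximum $|e^{\ast}||T_{e^{\ast}}|$. Since the root $o$ is a $\delta$-center of mass, the two subtrees $T$ and $\tilde T$ from \eqref{def:CenterOfMass} meet only at $o$, so every edge of $G$ lies in exactly one of $E(T)$ or $E(\tilde T)$. Without loss of generality assume $e^{\ast} \in E(T)$, and work with the inequality \eqref{eq:HardyTree1}.

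The natural witness is the indicator of the path from $o$ to $e^{\ast}_{+}$: define $g \colon E(T) \to \R$ by $g(e) := \mathbf{1}_{e \in \ell(e^{\ast}_{+})}$. Then the right-hand side of \eqref{eq:HardyTree1} reduces to $\sum_{e \in E(T)} g(e)^2 = |\ell(e^{\ast}_{+})| = |e^{\ast}|$. For the left-hand side, I would drop all but the contributions coming from vertices $v \in V(T_{e^{\ast}})$; for every such $v$ the path $\ell(e^{\ast}_{+})$ is entirely contained in $\ell(v)$, so $\sum_{e \in \ell(v)} g(e) = |e^{\ast}|$. This gives
\begin{equation*}
\sum_{v \in V(T)} \Big( \sum_{e \in \ell(v)} g(e) \Big)^2 \;\geq\; |V(T_{e^{\ast}})| \cdot |e^{\ast}|^2 \;=\; |T_{e^{\ast}}| \cdot |e^{\ast}|^2,
\end{equation*}
so that rearranging \eqref{eq:HardyTree1} yields $A \geq |e^{\ast}||T_{e^{\ast}}| = \max_{e \in E(G)} |e||T_e|$, which is \eqref{eq:LowerBoundRelax}.

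For the second assertion, Lemma \ref{lem:EquivalentHardy} gives $\lambda \leq 1/(\delta A)$, and the relation $t_{\textup{rel}} = 1/\lambda$ combined with the lower bound on $A$ just established yields $t_{\textup{rel}} \geq \delta A \geq \delta \max_{e \in E(G)} |e||T_e|$.

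There is no substantive obstacle: the argument reduces to picking the correct witness $g$. The only small points of care are to observe that the maximizing edge sits in one of the two center-of-mass subtrees (so that one of the two Hardy inequalities applies directly), and that for $v \in V(T)$ the path $\ell(v)$ in $G$ coincides with the intrinsic path to $o$ inside $T$, which is immediate since $T$ is a subtree containing $o$.
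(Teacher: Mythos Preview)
Your argument is correct and is essentially the paper's own proof: both pick the maximizing edge $e^{\ast}$ and use the indicator of the path $\ell(e^{\ast}_{+})$ as the Hardy-inequality witness, then bound the left side from below by the $|T_{e^{\ast}}|$ many vertices whose root-path contains all of $\ell(e^{\ast}_{+})$. The paper merely rescales its test function by the constant $1/|e^{\ast}_{+}|$, which is immaterial since both sides of \eqref{eq:HardyTree1} are homogeneous of degree two; your extra remark that $e^{\ast}$ lies in exactly one of $E(T),E(\tilde T)$ is a point the paper leaves implicit.
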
 
\begin{proof} We show the lower bound on $A$ by considering an explicit function $g_{e^{\ast}}$ in the definition of $A$ in \eqref{eq:HardyTree1} and \eqref{eq:HardyTree2}. Fix some $e^{\ast}\in E(G)$ which maximizes the right-hand side of \eqref{eq:LowerBoundRelax}. We define $g_{e^{\ast}}: E(G) \rightarrow \R$ to be 
\begin{equation*}
g_{e^{\ast}}(e) := \begin{cases}
\frac{1}{|e^{\ast}_+|} &  \text{ if } e \in \ell(e^{\ast}_+) \\
0 & \text{ if } e \notin \ell(e^{\ast}_+)
\end{cases}
\end{equation*} and note that $g_{e^{\ast}}$ satisfies
\begin{equation*}
\sum_{v\in V(G)} \Big(\sum_{e \in \ell(v)} g_{e^{\ast}}(e)\Big)^2 \geq \sum_{v\in T_{e^{\ast}}} \Big(\sum_{e \in \ell(v)} g_{e^{\ast}}(e)\Big)^2 = |T_{e^{\ast}}|
\end{equation*} as well as
\begin{equation*}
\sum_{e \in E(T)} g_{e^{\ast}}(e)^2 = \frac{1}{|e^{\ast}_+|} = \frac{1}{|e^{\ast}|} \ .
\end{equation*}  Using the definition of $A$ in Lemma \ref{lem:EquivalentHardy}, we 
see that \eqref{eq:LowerBoundRelax} holds.
\end{proof} 
\begin{proof}[Proof of Theorem \ref{thm:NoCutoff}] Recall that by Lemma \ref{lem:ProductCriterion}, there is no cutoff when the mixing time and the relaxation time are of the same order. Note that  
Proposition \ref{pro:HittingtimeUpper} yields an upper bound on the mixing time, while Proposition \ref{pro:LowerBoundRelaxationTime} establishes a lower bound on the relaxation time. Both bounds are of the same order, due to assumption \eqref{eq:NoCutoffCriterion}, which finishes the proof.
\end{proof}

\subsection{Cutoff for \textit{v}-retractions of trees} \label{sec:CutoffRetraction}

In this section, we prove Theorem \ref{thm:CutoffCompactification}. We use Proposition \ref{pro:UpperBoundRelaxationTime} with a specific choice of edge weights $(a_e)$ improving the choice of weights leading to Corollary \ref{cor:SimpleBound3}.

\begin{lemma}\label{lem:CutoffWeights} Suppose that assumptions \eqref{eq:MaximalPath} and \eqref{eq:HyperbolicGrowth} of Theorem \ref{thm:CutoffCompactification} hold and recall that $(\tilde{G}_n)_{n \geq 1}$ denotes the sequence of $v_n^{\ast}$-retractions of $(G_n)_{n \geq 1}$. Then we have 
\begin{equation}\label{eq:LemmaCutoffWeights}
\tilde{t}^n_{\textup{rel}} \ll \max_{v \in V(\tilde{G}_n)} \sum_{e \in \ell(v)} |T_e|  
\end{equation} where $\tilde{t}^n_{\textup{rel}}$ is the relaxation time belonging to $\tilde{G}_n$.
\end{lemma}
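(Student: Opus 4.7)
I would apply Proposition \ref{pro:UpperBoundRelaxationTime} to $\tilde G_n$ with a carefully chosen family of positive edge weights $(a_e)_{e \in E(\tilde G_n)}$, following the strategy laid out in Remark \ref{rem:StrategyCutoff}: the task is to strictly improve on the choice $a_e = |T_e|$ that gives Corollary \ref{cor:SimpleBound3}'s bound $M_n := \max_{v \in V(G_n)} \sum_{e \in \ell(v)} |T_e|$, which is exactly what we are trying to beat. Since $\tilde G_n$ is, by construction, a segment $e_1,\dots,e_{k^\ast}$ (with $k^\ast = |v^\ast_n|$) with binary trees $\bar T_0,\dots,\bar T_{k^\ast}$ attached at its vertices, it is natural to split edges into \emph{segment edges} and \emph{binary-tree edges} and assign weights class-wise.

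As a preliminary step I would verify that the retraction preserves the relevant quantity, namely $\max_{v \in V(\tilde G_n)} \sum_{e \in \ell(v)} |T_e| \asymp M_n$. This is because $|T_{e_i}^{\tilde G_n}| = |T_{e_i}^{G_n}|$ for segment edges, the binary-tree path sums are geometrically decreasing and hence controlled by $O(|\bar T_j|) \leq O(|V(\tilde G_n)|) \ll M_n$ via \eqref{eq:MaximalPath}, and the segment-path sum is $\asymp M_n$ by \eqref{eq:MaximalPath} itself. With this reduction the goal becomes: choose $(a_e)$ so that $\phi(e) := a_e^{-1}\sum_{v\in T_e}\sum_{\tilde e \in \ell(v)} a_{\tilde e} = o(M_n)$ for every edge $e$ of $\tilde G_n$.

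On binary-tree edges inside $\bar T_j$ I would set $a_e = |T_e^{\tilde G_n}|$, so that within each $\bar T_j$ the internal contribution is of order $|\bar T_j|$ by Corollary \ref{cor:SimpleBound3} applied to the binary tree, while the segment-above contribution is of order $|V(\tilde G_n)|$; together these give $\phi(e) = O(|V(\tilde G_n)|) \ll M_n$ by \eqref{eq:MaximalPath}. On segment edges I would take $a_{e_i}$ to depend on both $i$ and $|T_{e_i}|$ (for instance of the form $a_{e_i} = |T_{e_i}|/g(i)$ or $a_{e_i} = |T_{e_i}|^{1+\varepsilon_n}$) so as to exploit \eqref{eq:HyperbolicGrowth}. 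Indeed, $\max_{l} l |T_{e_l}| \ll M_n$ gives the pointwise decay $|T_{e_l}| \leq K_n/l$ with $K_n/M_n \to 0$, and after expanding $\phi(e_i) \leq a_{e_i}^{-1}\bigl[|T_{e_i}| \sum_{l\leq i} a_{e_l} + \sum_{l>i} a_{e_l}|T_{e_l}|\bigr] + (\text{binary contributions})$ and substituting the bound $|T_{e_l}| \leq K_n/l$, both the upper-path and lower-path sums collapse to quantities of order $K_n$ times harmonic-type constants, hence $o(M_n)$.

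The main obstacle is the fine-tuning of the segment weights: the natural candidate $a_{e_i} = |T_{e_i}|$ recovers exactly Corollary \ref{cor:SimpleBound3} and yields $\phi(e_i) \leq M_n$, with no strict improvement, while the opposite extreme $a_{e_i} = 1$ gives $\phi(e_i) \leq i|T_{e_i}| + \sum_{l>i}|T_{e_l}| \asymp M_n$ — the tail term still matches $M_n$. The non-trivial point is therefore to design a weight whose combined contributions (above and below $e_i$) are both $o(M_n)$ uniformly in $i$, using only the hypothesis $\max_l l|T_{e_l}| \ll M_n$; this is where the summability of the harmonic-type series created by the $1/l$ decay from \eqref{eq:HyperbolicGrowth} must be used decisively, and it is the technical crux I would expect the paper to handle by a careful swap-of-summation argument after the weight choice.
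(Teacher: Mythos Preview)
Your high-level strategy---apply Proposition~\ref{pro:UpperBoundRelaxationTime} with improved weights, split into segment and binary edges, and verify that the retraction preserves $M_n$---matches the paper exactly, and those preliminary reductions are correct. The genuine gap is in the segment weights. Both of your concrete suggestions fail: with $a_{e_i}=|T_{e_i}|^{1+\varepsilon_n}$ the upper-path term $|T_{e_i}|^{-\varepsilon_n}\sum_{l\le i}|T_{e_l}|^{1+\varepsilon_n}$ is at least $\sum_{l\le i}|T_{e_l}|$ (since $|T_{e_l}|\ge|T_{e_i}|$ for $l\le i$), which for $i=k^\ast$ equals $M_n$; with $a_{e_i}=|T_{e_i}|/g(i)$ the same term becomes $g(i)\sum_{l\le i}|T_{e_l}|/g(l)$, and substituting only $|T_{e_l}|\le K_n/l$ cannot make this $o(M_n)$ uniformly in $i$---try the instance $|\bar T_m|\sim m^{-2}$, where $K_n\asymp 1$, $M_n\asymp\log k^\ast$, and any polynomial or logarithmic $g$ fails one of the two terms. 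More fundamentally, your heuristic that both sums ``collapse to $K_n$ times harmonic-type constants'' cannot hold: the full sum $\sum_l|T_{e_l}|$ is genuinely $M_n$ and is not controlled by $K_n$ alone, so aiming for an $O(K_n)$ bound on $\phi(e_i)$ is too ambitious.

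The paper's resolution is different in kind. It takes $a_{e_i}=i^{-1/2}$ on the segment (independent of $|T_{e_i}|$) and $a_e=i^{-1/2}(|e|-i)^{-2}$ on binary edges in $\bar T_i$, so that $\sum_{\tilde e\in\ell(v)}a_{\tilde e}\asymp\sqrt{m}$ for every $v\in\bar T_m$. This yields $\phi(e_{i})\le 4\sqrt{i}\sum_{m\ge i}\sqrt{m}\,|\bar T_m|$, and now a single application of Cauchy--Schwarz gives
\[
\Big(\sqrt{i}\sum_{m\ge i}\sqrt{m}\,|\bar T_m|\Big)^{2}
\le\Big(i\sum_{m\ge i}|\bar T_m|\Big)\Big(\sum_{m\ge i}m\,|\bar T_m|\Big)
\le K_n\cdot M_n\ll M_n^{2},
\]
using that $\sum_{m\ge 1} m\,|\bar T_m|=\sum_{j\ge 1}|T_{e_j}|=M_n$. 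The bound achieved is thus $\sqrt{K_nM_n}$, the geometric mean of the two natural quantities, not $K_n$; this Cauchy--Schwarz interpolation between $i|T_{e_i}|$ and $M_n$ is precisely the idea your outline is missing, and it is not a ``swap-of-summation'' refinement of your weight candidates but a different weight choice altogether.
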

\begin{proof} Let $k := |v_n^{\ast}|$ and let $(\bar{T}_i)_{i \in \{0,\dots,k\}}$ denote the trees in $\tilde{G}_n$ attached along the segment $\ell(v_n^{\ast})$, ordered according to their distances from the root, which are used in the construction of the  $v_n^{\ast}$-retraction. We consider the following choice of the weights $(a_e)_{e \in E(G_n)}$. For $e \in \ell(v_n^{\ast})$, we let $a_e:= |e|^{-1/2}$. For $e \in E(\bar{T}_i)$, we set 
\begin{equation}
 a_e:= \frac{ 1}{\sqrt{\max\{i,1\} } (|e|-i)^2} \ .
\end{equation}
In order to apply Proposition \ref{pro:UpperBoundRelaxationTime}, we will now give an upper bound for
\begin{equation}\label{eq:VariationBoundCutoffWeights}
A_e := a_e^{-1}\sum_{v \in T_e} \sum_{\tilde{e} \in \ell(v)} a_{\tilde{e}}
\end{equation}
 for any possible choice of the edge $e \in E(\tilde{G}_n)$. We claim that without loss of generality, it suffices to consider $e\in \ell(v_n^{\ast})$. To see this, consider $e\in \bar{T}_{i}$ for $i\geq 1$ and let $e_i$ be the corresponding edge in $\ell(v_n^{\ast})$ with $|e_i|=i$. For every $v\in \bar{T}_i$, we have 
 \begin{equation}\label{eq:SitesInSubtrees}
 \sum_{\tilde{e} \in \ell(v)} a_{\tilde{e}} = \sum_{j=1}^{i} j^{-\frac{1}{2}} + i^{-\frac{1}{2}} \sum_{l=i+1}^{|e|} \frac{1}{(l-i)^2} \in [\sqrt{i}, 4 \sqrt{i}] \ .
 \end{equation}
Together with the fact that $\bar{T}_{i}$ has a binary structure, we get that
 \begin{align*}
A_{e}
 \leq 4 i(|e|-i)^2  |T_e| 
\leq 4 i(|e|-i)^2 2^{-(|e|-i)} |T_{e_i}| \leq 5 i  |T_{e_i}|    \leq 5  A_{e_i}
 \end{align*} holds. Similarly, for $e \in \bar{T}_0$, we see that $A_e \leq 5 |\bar{T}_0| \leq 5|V(G_n)|$, which is of smaller order than the right-hand side in \eqref{eq:LemmaCutoffWeights} by assumption \eqref{eq:MaximalPath}.
Hence, it suffices to bound $A_e$ for all edges $e$ within $\ell(v_n^{\ast})$ to conclude. From \eqref{eq:SitesInSubtrees}, we see that the right-hand side of \eqref{eq:VariationBoundCutoffWeights} is bounded from above by
\begin{equation*}
A_e = \sqrt{|e|} \sum_{i=|e|}^{k}\sum_{v \in \bar{T}_i} \sum_{\tilde{e} \in \ell(v)} a_{\tilde{e}} \leq 4\sqrt{|e|} \sum_{i=|e|}^{k} \sqrt{i} |\bar{T}_i| \ .
\end{equation*} 
It remains to show that
\begin{equation}\label{eq:InterpolationEstimate}
\sqrt{|e|} \sum_{i=|e|}^{k} \sqrt{i} |\bar{T}_i|  \ll  \sum_{e \in \ell(v_n^{\ast})} |\bar{T}_e|  \, .
\end{equation} Using the Cauchy-Schwarz inequality and \eqref{eq:HyperbolicGrowth}, we see that for all $e \in \ell(v_n^{\ast})$,
\begin{equation}\label{eq:CSInequality}
\Big(\sqrt{|e|} \sum_{i=|e|}^{k} \sqrt{i} |\bar{T}_i| \Big)^2 \leq \Big(|e|\sum_{i=|e|}^{k} |\bar{T}_i| \Big)\Big(\sum_{i=|e|}^{k} i |\bar{T}_i| \Big)  \ll \Big(\sum_{i=1}^{k} i |\bar{T}_i| \Big)^2 \, .
\end{equation} Taking square roots of both sides of \eqref{eq:CSInequality} finishes the proof.
\end{proof}
\begin{proof}[Proof of Theorem \ref{thm:CutoffCompactification}]
For the lower bound on the mixing time from Proposition \ref{pro:HittingtimeLower}, note that the leading order does not decrease when we replace $(G_n)$ by $(\tilde{G}_n)$ for $v_n^{\ast}$ from \eqref{eq:MaximalPath}, and that the root is still a $\tilde{\delta}$-center of mass for some $\tilde{\delta}>0$. Theorem \ref{thm:CutoffCompactification} follows together with the product criterion from Lemma \ref{lem:ProductCriterion}, and Lemma \ref{lem:CutoffWeights}. 
\end{proof}
\begin{remark}\label{rem:AlphaRetraction} Note that instead of binary trees, we may consider other trees $\bar{T}_i$ rooted at $v_i$  and attached to the segment at distance $i$ from $\tilde{o}$ for which
\begin{equation}\label{eq:RetractionAssumptionRem}
\max_{v \in V(T_{e})} \sum_{\bar{e} \in \ell(v)} |T_{\bar{e}}| \leq \alpha |T_{e}| 
\end{equation} holds for some constant $\alpha$. The above choice of binary trees satisfies \eqref{eq:RetractionAssumptionRem} with $\alpha = 2$. Depending on 
\eqref{eq:HyperbolicGrowth}, we may also allow $\alpha$ to go to infinity with $n$ sufficiently slow.
\end{remark}

\subsection{A sufficient condition for cutoff on trees} \label{sec:CriterionMiclo}

In this section, we give an upper bound on the relaxation time, which will allow us to give a sufficient condition for cutoff on trees. For a finite rooted tree $G=(V,E,o)$, we let $\mathcal{S}:=\{ B \subseteq V \colon B \neq \emptyset, o \notin B\}$, and define for all $B \in \mathcal{S}$
\begin{equation}\label{def:alphaW}
\nu_{B}:= \inf \Big\{ \sum_{e\in E} f(e)^2 \colon \sum_{e \in \ell(v)} f(e) \geq 1 \text{ for all } v \in B \Big\} > 0 \ ,
\end{equation} where the infimum is taken over functions $f\colon E \rightarrow \R$. For the following bound on the relaxation time, we use the ideas of Evans et al.\ for proving a Hardy inequality on continuous weighted trees \cite{EHP:Hardy}. A corresponding Hardy inequality for discrete weighted trees was obtained by Miclo, see Proposition 16 in \cite{M:SpectrumMarkovChains}. We will now provide a similar  result on the relaxation time of the simple random walk. 
\begin{proposition}\label{pro:UpperBoundRelaxationTimeHardy} 
Recall \eqref{V_m}. For any finite tree $G$, we have that 
\begin{equation}\label{eq:EstimateHardy}
 t_{\textup{rel}} \leq 32 \max_{B \in \mathcal{S}} \frac{|B|}{\nu_B} \leq 32   \max_{k \in \N} k |V_{k}| \, .
\end{equation} 
\end{proposition}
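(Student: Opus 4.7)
The plan is to prove the two inequalities in \eqref{eq:EstimateHardy} separately. The second is an elementary Cauchy--Schwarz calculation, while the first combines the variational formula for the spectral gap (Lemma \ref{lem:Rayleigh}) with a Muckenhoupt-type Hardy estimate in the spirit of Miclo \cite{M:SpectrumMarkovChains} and Evans--Harris--Pick \cite{EHP:Hardy}.

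For the first inequality, I would start by reducing $t_{\textup{rel}}$ to a Hardy constant on $G$. Given $f\colon V \to \R$ with $\sum_v f(v)=0$, set $g(e) := f(e_+) - f(e_-)$ and use $f(v) = f(o) + \sum_{e \in \ell(v)} g(e)$ together with $\sum_v f(v) = 0$ (which determines $f(o)$) to expand
\begin{equation*}
\sum_v f(v)^2 \;=\; -|V|\, f(o)^2 + \sum_v \Big(\sum_{e \in \ell(v)} g(e)\Big)^{\!2} \;\leq\; \sum_v \Big(\sum_{e \in \ell(v)} g(e)\Big)^{\!2}.
\end{equation*}
Combined with Lemma \ref{lem:Rayleigh}, this yields $t_{\textup{rel}} \leq A_G$, where $A_G$ is the smallest constant such that $\sum_v (\sum_{e \in \ell(v)} g(e))^2 \leq A_G \sum_e g(e)^2$ holds for every $g\colon E(G) \to \R$; observe that no $\delta$-center-of-mass assumption is needed for this upper bound.

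The substantive step is to bound $A_G \leq 32 M$, where $M := \max_{B\in\mathcal{S}} |B|/\nu_B$. I would take $g \geq 0$ with $\sum_e g(e)^2 = 1$, set $F(v) := \sum_{e \in \ell(v)} g(e)$, and consider the dyadic super-level sets $B_k := \{v : F(v) \geq 2^k\}$ for $k \in \Z$. For every $k$ the rescaled function $g/2^k$ is feasible in \eqref{def:alphaW} for $B_k$, giving $\nu_{B_k} \leq 4^{-k}$ and hence $|B_k| \leq M \nu_{B_k} \leq M \cdot 4^{-k}$. A layer-cake decomposition together with Abel summation yields
\begin{equation*}
\sum_v F(v)^2 \;\leq\; 4 \sum_{k \in \Z} |B_k \setminus B_{k+1}|\, 4^k \;=\; 3 \sum_{k \in \Z} |B_k|\, 4^k,
\end{equation*}
so everything reduces to bounding the right-hand side by a multiple of $M$. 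The main obstacle is exactly this last estimate: a naive use of $|B_k| \leq M \cdot 4^{-k}$ gives only $\sum_k |B_k| 4^k \leq M \cdot \#\{k : B_k \neq \emptyset\} = O(M \log|V|)$, so I would have to play the Muckenhoupt bound against the monotone decrease of $|B_k|$ in $k$, the stabilization $B_k = \{v : F(v) > 0\}$ for $k$ sufficiently negative, and the vanishing $B_k = \emptyset$ for $k$ sufficiently large, in the manner of Miclo and Evans--Harris--Pick, to collapse the sum to a geometric series of total mass $O(M)$ and pin down the final constant $32$.

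For the second inequality, fix $B \in \mathcal{S}$ and set $k := \min_{v \in B} |v| \geq 1$. Then $B \subseteq V_k$, so $|B| \leq |V_k|$. For any feasible $f$ in \eqref{def:alphaW} and any $v \in B$ with $|v| = k$, Cauchy--Schwarz yields $1 \leq (\sum_{e \in \ell(v)} f(e))^2 \leq k \sum_{e \in \ell(v)} f(e)^2 \leq k \sum_e f(e)^2$, whence $\nu_B \geq 1/k$. Combining, $|B|/\nu_B \leq k |V_k| \leq \max_{k \in \N} k|V_k|$.
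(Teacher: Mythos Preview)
Your reduction $t_{\textup{rel}}\le A_G$ and your proof of the second inequality are correct and essentially the same as in the paper. The gap is exactly where you flagged it: bounding $A_G\le 32M$. With super-level sets $B_k=\{F\ge 2^k\}$ and the global test function $g/2^k$, the estimate $|B_k|\le M\,4^{-k}$ is the best you get, and your layer-cake sum $\sum_k |B_k|4^k$ then genuinely carries a factor $\#\{k:B_k\neq\emptyset\}$. Neither ``monotone decrease of $|B_k|$'' nor ``stabilization for $k\to-\infty$'' rescues this: on the stable range $|B_k|=|\{F>0\}|$ the geometric tail is harmless, but on the active range each term is only $\le M$ and there are $O(\log|V|)$ of them. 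The argument as written yields $A_G\le CM\log|V|$, not $A_G\le 32M$.

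The missing idea is to \emph{localize} the test function, which is exactly what the Evans--Harris--Pick/Miclo argument does and what the paper implements. Work instead with the dyadic \emph{shells} $B_i:=\{v:2^i\le F(v)<2^{i+1}\}$ and, for each $i$, use
\[
g_i(e):=2^{-i+1}g(e)\,\mathds{1}_{\{e_+\in B_{i-1}\cup B_i\}}.
\]
Because $g\ge 0$, the function $F$ is nondecreasing along every root--leaf path, so for $v\in B_i$ the portion of $\ell(v)$ with $e_+\notin B_{i-1}\cup B_i$ contributes at most $2^{i-1}$ to $F(v)$; hence $\sum_{e\in\ell(v)}g_i(e)\ge 2^{-i+1}(2^i-2^{i-1})=1$, so $g_i$ is feasible for $B_i$ and
\[
\nu_{B_i}\le 4^{-i+1}\sum_{e:\,e_+\in B_{i-1}\cup B_i}g(e)^2.
\]
Now $\sum_v F(v)^2\le \sum_i 4^{i+1}|B_i|\le M\sum_i 4^{i+1}\nu_{B_i}\le 16M\sum_i\sum_{e:\,e_+\in B_{i-1}\cup B_i}g(e)^2\le 32M\sum_e g(e)^2$, since each edge is counted in at most two indices $i$. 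This is the step your proposal was missing; the localization is what turns the $\log$-many equal contributions into a sum that telescopes over the edges.
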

\begin{proof} Recall the trees $T,\tilde{T}$ for $G$ from \eqref{def:CenterOfMass}. For the first inequality in \eqref{eq:EstimateHardy}, it suffices by Lemma \ref{lem:EquivalentHardy} to show that for every function $g \colon E(T) \rightarrow \R$ with $g \not\equiv0$, we have
\begin{equation}\label{eq:ForSomeMax}
\sum_{v\in V(T)} \Big(\sum_{e \in \ell(v)} g(e)\Big)^2 \leq 32\left(\max_{B \in \mathcal{S}} \frac{|B|}{\nu_B} \right) \sum_{e \in E(T)} g(e)^2 \ ,
\end{equation} and a similar statement with respect to $\tilde{T}$. We will now show \eqref{eq:ForSomeMax} only for $T$.  Fix a non-negative function $g$ (otherwise consider $|g|$), and define for all $i\in \Z$ the set
\begin{equation*}
B_i := \Big\{ v \in V(T) \colon 2^{i} \leq \sum_{e \in \ell(v)}g(e) < 2^{i+1} \Big\} \ .
\end{equation*}
We let $\mathcal{I}:= \{ i \in \Z \colon B_i \neq \emptyset \}$ and note that $B_i \in \mathcal{S}$ when $i \in  \mathcal{I}$. Observe that
\begin{equation*}
\sum_{v\in V(T)} \Big(\sum_{e \in \ell(v)} g(e)\Big)^2 \leq \sum_{i \in \mathcal{I}} 2^{2i+2} |B_i| \leq \left(\max_{B \in \mathcal{S}} \frac{|B|}{\nu_B}  \right) \sum_{i \in \mathcal{I}} 2^{2i+2} \nu_{B_i} 
\end{equation*} holds. Hence, it suffices now to prove that for all $i \in \mathcal{I}$
\begin{equation}\label{eq:BiEstimate}
\nu_{B_i} \leq 2^{-2i+2} \sum_{e\in E(T) \colon e_+ \in B_{i-1}\cup B_i}g(e)^2 
\end{equation}
is satisfied in order to conclude \eqref{eq:ForSomeMax}. To see that \eqref{eq:BiEstimate} indeed holds, consider 
\begin{equation*}
g_i(e) := 2^{-i+1}g(e)\mathds{1}_{\{e_+ \in B_{i-1} \cup B_i\}}
\end{equation*} for all $e\in E(T)$. Using the definition of the sets $(B_i)_{i \in \Z}$ and the fact that $g$ is non-negative, we have for all $v\in B_i$ \begin{equation*}
\sum_{e \in \ell(v)} g_i(e) = 2^{-i+1}\Big(\sum_{e \in \ell(v)} g(e) - \sum_{\tilde{e} \in \ell(v) \colon \tilde{e}_+ \notin B_{i-1}\cup B_i} g(\tilde{e})\Big)\geq 2^{-i+1}\big(2^{i}- 2^{i-1}\big) = 1\ . 
\end{equation*} Plugging $g_i$ into the definition \eqref{def:alphaW} of $\nu_{B_i}$ yields \eqref{eq:BiEstimate}, and hence the first inequality in \eqref{eq:EstimateHardy}.  
For the second inequality in \eqref{eq:EstimateHardy}, observe that for all $B\in \mathcal{S}$ 
\begin{equation}\label{eq:BChoice}
\frac{\nu_{B}}{|B|} \geq \frac{1}{|B|}\max_{v \in B} \inf\Big\{ \sum_{e \in E(G)} f(e)^2 \colon \sum_{e \in \ell(v)} f(e)=1 \Big\} = \frac{1}{|B|}\max_{v \in B} \frac{1}{|v|} 
\end{equation} holds, where the infimum is attained when $f(e)=|v|^{-1} \mathds{1}_{\{e\in \ell(v)\}}$. We then optimize in \eqref{eq:BChoice} over the choice of $B\in \mathcal{S}$  to conclude.
\end{proof}
\begin{proof}[Proof of Theorem \ref{thm:CutoffMiclo}] Similar to the proof of Theorem \ref{thm:NoCutoff}, recall that by Lemma \ref{lem:ProductCriterion}, we have cutoff if and only if the relaxation time is of strictly smaller order than the mixing time. Note that by Proposition \ref{pro:HittingtimeLower} we obtain a lower bound on the mixing time, while Proposition \ref{pro:UpperBoundRelaxationTimeHardy} gives us an upper bound on the relaxation time. We conclude taking into account assumption \eqref{eq:ComplimentAssumption}.
\end{proof}
As a direct consequence of Theorem \ref{thm:CutoffMiclo}, we see that the simple random walk on the following family of trees constructed by Peres and Sousi in \cite{PS:CutoffTree} exhibits cutoff: 
Fix $k$ and consider the tree $G_N$ for $N=N(k)$ which is constructed as follows: Set $n_i = 2^{2^{i}}$ and start with a segment of length $n_k$, rooted at one of the endpoints. At the root, we attach a binary tree of size $N:=n_k^3$. Then at distance $n_i$ from the root, attach a binary tree of size $N/{n_i}$ for all $i \in \{ k/2,\dots,k\}$. We reindex the sequence of trees $(G_N)$ to obtain a family of trees $(G_n)_{n \geq 1}$ for all $n\geq 1$. A visualization of these trees can be found in \cite{PS:CutoffTree}. Note that in this construction, the root is always a $\delta$-center of mass for some $\delta>0$ which does not depend on $n$. 
 \begin{corollary}\label{cor:CutoffTree} The family of trees described by Peres and Sousi has a mixing time of order $Nk$ and a relaxation time of order $N$, and hence exhibits cutoff provided that $k \rightarrow \infty$ with $N \rightarrow \infty$.
 \end{corollary}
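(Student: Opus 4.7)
The plan is to verify the two quantitative claims ($t_{\text{mix}}^n\asymp Nk$ and $t_{\text{rel}}^n\asymp N$) separately and then invoke the product criterion of Lemma~\ref{lem:ProductCriterion}. The key observation is that, taking $v^*$ to be the endpoint of the main segment opposite the root, the tree $G_n$ is already in the shape of a $v^*$-retraction in the sense of Theorem~\ref{thm:CutoffCompactification}: along $\ell(v^*)$, the subtrees $\bar{T}_i$ are non-trivial only at the very sparse set of positions $i\in\{0,n_{k/2},n_{k/2+1},\ldots,n_k\}$, with sizes $N,N/n_{k/2},\ldots,N/n_k$ respectively. This sparsity is what will make the weighted-path estimate of Lemma~\ref{lem:CutoffWeights} deliver a sharp $O(N)$ bound.

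For the mixing time, I would first compute $\sum_{e\in\ell(v^*)}|T_e|$: the pure-segment contribution is $\sum_{j=1}^{n_k}(n_k-j+1)=O(n_k^2)=O(N^{2/3})$, while the attached binary trees contribute $\sum_{i=k/2}^{k}(N/n_i)\cdot n_i\asymp Nk$, which dominates. A routine check handles all other choices of $v$: inside the root's binary tree, $|T_e|$ decays geometrically along $\ell(v)$ and the sum stays $O(N)$; deep inside an attached binary tree, the sum grows like $N(i-k/2)$ and is bounded by $Nk$. Hence $\max_v\sum_{e\in\ell(v)}|T_e|\asymp Nk$, and since $\Delta_n=3$, Propositions~\ref{pro:HittingtimeUpper} and~\ref{pro:HittingtimeLower} yield $t_{\text{mix}}^n\asymp Nk$.

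For the relaxation time, the lower bound $t_{\text{rel}}^n\gtrsim N$ is immediate from Proposition~\ref{pro:LowerBoundRelaxationTime} applied to the edge $e$ joining $o$ to its neighbour in the root's binary tree, for which $|e|\cdot|T_e|\asymp N$. The matching upper bound $t_{\text{rel}}^n=O(N)$ is the main difficulty, because the naive choice $a_e=|T_e|$ in Corollary~\ref{cor:SimpleBound3} only yields $O(Nk)$. The plan is to plug the weights of Lemma~\ref{lem:CutoffWeights} (that is, $a_e=|e|^{-1/2}$ on $\ell(v^*)$ and $a_e=\bigl(\sqrt{\max(i,1)}(|e|-i)^2\bigr)^{-1}$ inside the binary tree attached at distance $i$) into Proposition~\ref{pro:UpperBoundRelaxationTime} and to exploit the sparsity. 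For a segment edge $e$, the estimate from the proof of Lemma~\ref{lem:CutoffWeights} reduces to $A_e\leq 4\sqrt{|e|}\sum_{j\,:\,n_j\geq|e|}N/\sqrt{n_j}$, and since the doubly-exponential spacing $n_{j+1}=n_j^2$ makes this sum geometric, it is $O(N\sqrt{|e|/n_{j^*}})=O(N)$, where $n_{j^*}$ is the smallest $n_j\geq|e|$. The factor-$5$ absorption from the same lemma then handles edges inside the binary trees, giving $t_{\text{rel}}^n=O(N)$.

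Combining, $t_{\text{rel}}^n/t_{\text{mix}}^n\asymp 1/k\to 0$ as $k\to\infty$, and cutoff follows from Lemma~\ref{lem:ProductCriterion}. The hard part will be the sharp $O(N)$ upper bound on the relaxation time: the doubly-exponential spacing of the positions $(n_j)_{j=k/2}^k$ simultaneously drives the mixing time up to $Nk$ (since the path to $v^*$ crosses a growing number of attached binary trees) and keeps the relaxation time at $O(N)$ (since the same spacing collapses the sparse geometric sum in the weighted-path estimate). Neither of the coarser bounds in Corollaries~\ref{cor:SimpleBound}--\ref{cor:SimpleBound3} captures this balance.
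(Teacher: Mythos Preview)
Your proposal is correct, and for the relaxation-time upper bound it takes a different route from the paper. The paper invokes Proposition~\ref{pro:UpperBoundRelaxationTimeHardy} and asserts that a count of $\max_m m|V_m|$ yields $t^n_{\textup{rel}}=O(N)$; you instead plug the weights of Lemma~\ref{lem:CutoffWeights} into Proposition~\ref{pro:UpperBoundRelaxationTime} and exploit the doubly-exponential spacing $n_{j+1}=n_j^{2}$ to collapse $\sqrt{|e|}\sum_{j:\,n_j\ge |e|}N/\sqrt{n_j}$ to $O(N)$. The mixing-time estimates and the relaxation-time lower bound are handled the same way in both arguments.

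Your route is not merely an alternative---it closes a gap. The Hardy-type bound $t^n_{\textup{rel}}\le 32\max_m m|V_m|$ does \emph{not} deliver $O(N)$ here: since the tree has bounded degree and contains a binary block of $N$ vertices, for any choice of root the ball of radius $r$ has at most $3\cdot 2^{r}$ vertices, so $|V_m|\ge N/2$ whenever $m\le \log_2 N-O(1)$, which forces $\max_m m|V_m|\gtrsim N\log N$. But $\log N = 3\cdot 2^{k}\log 2\gg k$, so this bound is of strictly larger order than $t^n_{\textup{mix}}\asymp Nk$ and cannot be fed into Lemma~\ref{lem:ProductCriterion}. Your weighted-path calculation supplies the sharp $O(N)$ that the corollary states and that the product criterion genuinely requires.
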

 \begin{proof} We first consider the $\varepsilon$-mixing time for some $\varepsilon>0$. In Proposition \ref{pro:HittingtimeLower}, we choose $v$ to be a leaf in the tree attached at distance $n_k$ from the root and get
 \begin{equation*}
 t_{\textup{mix}}(\varepsilon) \geq c\sum_{i = k/2}^{k-1} (n_{i+1}-n_{i})\sum_{j=i}^{k-1} \frac{N}{n_j} \geq \frac{c}{4} k N 
 \end{equation*} for some constant $c=c(\varepsilon)>0$, when summing only over the sites in the attached binary trees for \eqref{eq:hittingTimeEstimateLower}. Since $N/n_i \gg n_i$ for all $i \leq k$, a corresponding upper bound follows from Proposition \ref{pro:HittingtimeUpper}. While a lower bound on $t_{\textup{rel}}$ of order $N$ is immediate from Proposition \ref{pro:LowerBoundRelaxationTime}, we obtain a corresponding upper bound of the same order by Proposition \ref{pro:UpperBoundRelaxationTimeHardy}, by a straight-forward count of the sites of distance at least $m$ from the root. We conclude that cutoff occurs by Lemma \ref{lem:ProductCriterion}.
 \end{proof}

\section{Cutoff for SRW on trees is atypical} \label{sec:CutoffAtypical}

We now use the previous estimates and ask about cutoff for the simple random walk on the following three families of trees from Theorem \ref{thm:CutoffExamples}: 
infinite spherically symmetric trees truncated at level $n$, supercritical Galton--Watson trees conditioned on survival and truncated at height $n$, and families of combinatorial trees converging to the Brownian continuum random tree (CRT). The latter includes critical Galton--Watson trees conditioned to have exactly $n$ sites. In all three cases, we will in the following verify that cutoff  does not occur.

\subsection{Spherically symmetric trees} \label{sec:Spherically}

Let $G=(V,E,o)$ be a rooted tree. We say that $G$ is \textbf{spherically symmetric} if we have that $\deg(v)=\deg(v^{\prime})$ holds for all $v,v^{\prime} \in V$ with $|v|=|v^{\prime}|$. Examples for such trees are the integer lattice or regular trees. We write $\deg_k$ for the degree of the vertices at generation $k$ provided that $V_{k}\neq \emptyset$ holds (recall \eqref{V_m}). 

\begin{proposition}\label{pro:NoCutoffSpherically} For a given infinite spherically symmetric tree $G$ of maximum degree $\Delta$, let $(G_n)_{n \geq 1}$ be the family of trees, induced by $G$ restricted to the sites $V \setminus V_n$ for all $n\in \N$. Then the simple random walk on $(G_n)_{n \geq 1}$ does not exhibit cutoff.
\end{proposition}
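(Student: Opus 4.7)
The plan is to apply Lemma \ref{lem:ProductCriterion}: I will prove that $t^n_{\textup{rel}} \asymp t^n_{\textup{mix}}$, so that the product criterion fails. Because $G_n$ is spherically symmetric, the subtree size $|T_e|$ depends only on $|e|$, so I write $t_k := |T_e|$ whenever $|e|=k$, and note that $\sum_{e\in\ell(v)} |T_e| = \sum_{k=1}^{|v|} t_k$ is maximized over $v$ at any deepest leaf, with value $\sum_{k=1}^{n-1} t_k$. Combining Propositions \ref{pro:HittingtimeUpper} and \ref{pro:HittingtimeLower} then yields $t^n_{\textup{mix}} \asymp \sum_{k=1}^{n-1} t_k$, while Corollary \ref{cor:SimpleBound3} already gives the upper bound $t^n_{\textup{rel}} \leq \sum_{k=1}^{n-1} t_k$. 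So the only nontrivial step is a matching \emph{lower} bound on $t^n_{\textup{rel}}$.

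My strategy for that lower bound is to adapt the projection argument of Nestoridi and Nguyen. Fix two children $v_1,v_2$ of the root $o$ of $G_n$ and consider functions on $V(G_n)$ of the form
$$ f(v) = g(|v|)\bigl(\mathbf{1}_{v \in T_{v_1}} - \mathbf{1}_{v \in T_{v_2}}\bigr), \qquad g(0)=0, $$
extended by $0$ off $T_{v_1}\cup T_{v_2}$. Spherical symmetry together with the antisymmetry makes $\mathcal{L}f(o)=0$, and a direct computation shows that $f$ is an eigenfunction of $\mathcal{L}$ with eigenvalue $-\mu$ exactly when $g$ is an eigenfunction, with the same eigenvalue, of the birth-death chain $Z$ on $\{0,1,\dots,n-1\}$ with transition rates $c_k := \deg_k - 1$ from $k$ to $k+1$ and $1$ from $k$ to $k-1$, absorbed at $0$. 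Consequently every positive eigenvalue of $Z$ also appears in the spectrum of $\mathcal{L}$, giving $t^n_{\textup{rel}} \geq 1/\mu_*(Z)$, where $\mu_*(Z)>0$ is the smallest positive eigenvalue of the absorbed chain $Z$.

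It remains to show $1/\mu_*(Z) \asymp \sum_{k=1}^{n-1} t_k$. The standard formula $\E_k[\tau_{k-1}]=t_k$ for continuous-time birth-death chains gives $\sup_k \E_k[\tau_0] = \E_{n-1}[\tau_0] = \sum_{k=1}^{n-1} t_k$, and combining this with the birth-death-chain estimates of Chen and Saloff-Coste \cite{CS:SpectrumBirthDeath,CS:CutoffTimes} (from which $1/\mu_*(Z) \asymp \sup_k \E_k[\tau_0]$) delivers the equivalence. Assembling all the bounds yields $t^n_{\textup{rel}} \asymp t^n_{\textup{mix}}$, and Lemma \ref{lem:ProductCriterion} excludes cutoff. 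The main technical obstacle is precisely this final equivalence for the absorbed chain: Muckenhoupt's criterion for the discrete Hardy inequality reduces it to showing $\max_K\bigl(\sum_{k=1}^K 1/a_k\bigr)\bigl(\sum_{k=K}^{n-1} a_k\bigr) \asymp \sum_k t_k$ with $a_k:=c_0c_1\cdots c_{k-1}$, and one must use the bounded-degree assumption $\deg_k \le \Delta$ in an essential way to control this maximum in terms of $\sum_k t_k$ without losing a spurious logarithmic factor.
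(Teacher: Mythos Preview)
Your approach is essentially the paper's: both prove $t^n_{\textup{rel}}\asymp t^n_{\textup{mix}}$ by projecting to a birth--death chain via antisymmetric radial functions (the Nestoridi--Nguyen trick) and then invoking Chen--Saloff-Coste. The only cosmetic difference is that the paper works with the \emph{doubled} reversible chain on $\{1,\dots,2n-1\}$ rather than your absorbed half-chain $Z$; its Lemma~\ref{lem:EigenfunctionBirthDeath} shows, via monotonicity of the second eigenfunction, that the spectral gap $\tilde\lambda$ of the doubled chain has an antisymmetric eigenfunction, so $\tilde\lambda=\mu_*(Z)$ and the two chains carry the same information.

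Where the arguments diverge is exactly at the step you flag as the obstacle. You reduce to the Muckenhoupt quantity $B=\max_K\bigl(\sum_{k\le K}1/a_k\bigr)\bigl(\sum_{j\ge K}a_j\bigr)$ and then need $B\gtrsim\sum_k t_k$; the paper bypasses this entirely by applying Theorem~4.2 of \cite{CS:SpectrumBirthDeath} to the doubled \emph{ergodic} chain, which yields
\[
\frac{1}{\tilde\lambda}\;\ge\;\frac{1}{16\Delta}\sum_{i=1}^{n}\frac{1}{\tilde\pi(i)}\;\asymp\;\frac{1}{\Delta}\Bigl(\sum_j a_j\Bigr)\Bigl(\sum_k \frac{1}{a_k}\Bigr).
\]
This product form is immediately $\ge\sum_{j\ge k}a_j/a_k=\sum_k t_k$, so the matching lower bound on $t^n_{\textup{rel}}$ follows in one line; the bounded-degree hypothesis enters only through the prefactor $1/\Delta$, not through any comparison of $B$ with $\sum_k t_k$. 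Note also that your intermediate claim ``$1/\mu_*(Z)\asymp\sup_k\E_k[\tau_0]$ from Chen--Saloff-Coste'' is not what those references give: the identity $\E_{n-1}[\tau_0]=\sum_i 1/\mu_i$ yields only $1/\mu_*\le\E_{n-1}[\tau_0]$, and the reverse inequality is precisely the issue you are trying to resolve via Muckenhoupt.
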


When $G=\N$ the claim is well-known, see \cite{LPW:markov-mixing}. Otherwise, for all $n\in \N$ large enough, we choose the root of $G_n$ to be the first branching point in $G$, i.e.\ the vertex closest to the root with degree at least $3$. In particular, note that the root of $G_n$ will be a $\frac{1}{4}$-center of mass for all $n$ sufficiently large. By Proposition \ref{pro:HittingtimeUpper} and \ref{pro:HittingtimeLower}, we see that the $\frac{1}{8}$-mixing time of $G_n$ satisfies
\begin{equation}\label{eq:HittingTimeSpherically}
t^n_{\textup{mix}}\left(\frac{1}{8}\right) \asymp \sum_{i=1}^{n-1} \sum_{j=i+1}^{n-1} \prod_{k=i}^{j-1} (\deg_k -1)  \ .
\end{equation}
We will now show a lower bound on the relaxation time of the same order by using a comparison to birth-and-death chains. Our strategy will be similar to \cite{NN:SpectrumTree}, where Nestoridi and Nguyen study eigenvalues and eigenfunctions for the $d$--regular tree. More precisely, we will exploit that certain eigenfunctions of birth-and-death chains can be converted to eigenfunctions of the simple random walk on spherically symmetric trees. Without loss of generality, we will assume that $\deg(o)>1$ holds since removing the sites before the first branching point in $G$ changes the relaxation time by at most a constant factor. This can be seen  using the characterization of the spectral gap in Lemma \ref{lem:Rayleigh}.
Let $(X_t)_{t \geq 0}$ be the continuous-time birth-and-death chain on the segment $\{1,\dots,2n-1\}$ with nearest neighbor transition rates $(r(x,y))$ given by
\begin{equation}
r(x,y)=\begin{cases} \deg_{x-n}-1 & \text{ if } y=x+1>n+1 \\
\deg_{n-x}-1 & \text{ if } y=x-1<n-1 \\
1 & \text{ otherwise}
\end{cases}
\end{equation} for all $\{x,y\} \in E(G_n)$. We make the following observation on the spectral gap of $(X_t)_{t \geq 0}$.
\begin{lemma}\label{lem:EigenfunctionBirthDeath} Let $\tilde{\lambda}$ be the spectral gap of $(X_t)_{t \geq 0}$. There exists a corresponding eigenfunction $\tilde{f} \colon \{1,\dots,2n-1\} \rightarrow \R$ which satisfies \begin{equation}\label{eq:EigenfunctionsBirthDeath}
\tilde{f}(x)=-\tilde{f}(2n-x)
\end{equation} for all $x \in \{1,\dots,2n-1\}$.
\end{lemma}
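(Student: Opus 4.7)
The plan is to exploit the reflection symmetry $\sigma\colon x\mapsto 2n-x$ of the birth-and-death chain $(X_t)_{t\geq 0}$ about its midpoint $n$. A direct case analysis of the three cases defining $r(x,y)$ shows that $r(\sigma(x),\sigma(y))=r(x,y)$, so the unique invariant measure $\pi$ also satisfies $\pi(\sigma(x))=\pi(x)$, and the reflection operator $R\colon f\mapsto f\circ\sigma$ is a $\pi$-weighted isometry commuting with the generator $\mathcal{L}$. Since $\mathcal{L}$ is self-adjoint with respect to $\pi$ and $R^{2}=I$, the operators $\mathcal{L}$ and $R$ can be simultaneously diagonalized, so each eigenspace of $\mathcal{L}$ has a basis consisting of symmetric functions ($Rf=f$) and antisymmetric functions ($Rf=-f$).

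Next I would invoke two classical facts about irreducible birth-and-death chains, whose generators are irreducible Jacobi (tridiagonal) matrices: all eigenvalues are simple, and by the Sturm oscillation theorem for Jacobi matrices, if the eigenvalues are ordered $0=-\mu_{0}>-\mu_{1}>\dots>-\mu_{2n-2}$, then an eigenfunction corresponding to $-\mu_{k}$ has exactly $k$ sign changes along the ordered state space $1,\dots,2n-1$. (Sign changes are counted with the standard convention that at an interior zero of the eigenfunction the two neighboring values have opposite signs, which is forced by the three-term recursion.) In particular, the spectral gap eigenvalue $-\tilde\lambda=-\mu_{1}$ admits, up to a scalar, a unique eigenfunction $\tilde f$ with exactly one sign change, and by the first paragraph $\tilde f$ is either symmetric or antisymmetric.

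The final step is to rule out the symmetric case by a parity argument. If $\tilde f$ were symmetric, then any sign change across a pair $(i,i+1)$ would, by reflection symmetry, be matched by a companion sign change across $(2n-i-1,2n-i)$, and these two pairs are always distinct because $2n-1$ is odd, so $i\neq 2n-i-1$. Hence any nonzero symmetric function on $\{1,\dots,2n-1\}$ has an even number of sign changes, contradicting the fact that $\tilde f$ has exactly one. Therefore $\tilde f$ must be antisymmetric, i.e.\ $\tilde f(x)=-\tilde f(2n-x)$ for all $x\in\{1,\dots,2n-1\}$, which is what is claimed.

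The main subtlety, which I expect to be mild, is handling interior zeros of $\tilde f$ cleanly in the sign-change count: the standard Jacobi-matrix fact that an eigenfunction cannot vanish at two consecutive sites and that its sign always flips across an interior zero makes the parity count robust. A completely variational alternative, building an antisymmetric eigenfunction via the Rayleigh quotient restricted to the antisymmetric subspace and then matching its top eigenvalue to $-\tilde\lambda$, would also work; however the symmetry plus Sturm oscillation argument above is shorter and makes the geometric picture transparent, which appears to be exactly what the subsequent projection argument in the spirit of \cite{NN:SpectrumTree} will need.
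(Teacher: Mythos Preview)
Your argument is correct, but it takes a heavier route than the paper. The paper's proof is three lines: it invokes the fact (Lemma~22.17 in \cite{LPW:markov-mixing}) that an irreducible birth-and-death chain admits a \emph{monotone} eigenfunction $g$ for the spectral gap; by the same reflection symmetry you identify, $h:=g\circ\sigma$ is again an eigenfunction for $\tilde\lambda$; and then $\tilde f:=g-h$ is antisymmetric by construction and nonzero because monotonicity forces $g(1)\neq g(2n-1)$. No simplicity of eigenvalues and no Sturm oscillation are needed.

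Your approach, by contrast, first establishes simplicity of the spectrum, then uses the sign-change count from the Sturm theory of Jacobi matrices together with a parity argument to rule out the symmetric alternative. This is perfectly valid and in fact yields the slightly stronger statement that \emph{every} eigenfunction for $\tilde\lambda$ is antisymmetric (not just that one exists), though the paper does not need this. The cost is that you import more machinery (simplicity, oscillation theory, careful handling of interior zeros), whereas the paper's antisymmetrization trick sidesteps all of it by exploiting monotonicity directly. The variational alternative you mention at the end is closer in spirit to the paper's idea, but still less direct than simply antisymmetrizing a monotone eigenfunction.
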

\begin{proof} Note that there exists an eigenfunction $g$ corresponding to the spectral gap of an irreducible birth-and-death chain which is monotone, see Lemma 22.17 in \cite{LPW:markov-mixing}. By the symmetry of the transition rates, we see that the function $h$ given by $h(x)=g(2n-x)$ for all $x \in \{1,\dots,2n-1\}$ is also an eigenfunction for $(X_t)_{t \geq 0}$ with respect to $\tilde{\lambda}$. Since $g(1) \neq g(2n-1)$, we see that the function $\tilde{f}:=g-h$ is an eigenfunction corresponding to $\tilde{\lambda}$ which has the desired properties.
\end{proof}
With these observations, we have all tools to give the proof of Proposition \ref{pro:NoCutoffSpherically}.
\begin{proof}[Proof of Proposition \ref{pro:NoCutoffSpherically}] 
Observe that we can extend the eigenfunction $\tilde{f}$ of $(X_t)_{t \geq 0}$ from Lemma \ref{lem:EigenfunctionBirthDeath}  belonging to the spectral gap to an eigenfunction $F$ with eigenvalue $\tilde{\lambda}$ of the random walk on $G_n$. To see this, let $x_1,x_2$ be two sites adjacent to $o$, and consider the function $F \colon V \rightarrow \R$  given by
\begin{equation}
F(v)=\begin{cases} f(n-|v|) & \text{ if } v \in T_{x_1}  \\
f(n+|v|) & \text{ if } v \in T_{x_2}  \\
0 & \text{ otherwise. }  
\end{cases}
\end{equation}
The fact that $F$ is an eigenfunction of the simple random walk on $G_n$ follows by a direct verification using the generator. 
Hence, in order to give a lower bound on the relaxation time of the same order as in \eqref{eq:HittingTimeSpherically}, it suffices to bound the spectral gap $\tilde{\lambda}$ of $(X_t)_{t \geq 0}$. Note that the stationary distribution $\tilde{\pi}$ of $(X_t)_{t \geq 0}$ satisfies
\begin{equation}
\tilde{\pi}(x) =\frac{1}{Z} \sum^{n-2}_{k=\min\{ x,2n-x\}} (\deg_{n-k-1} - 1)
\end{equation} for all $x\in \{ 1,\dots,2n-1\}$, where  $Z$ is a normalization constant. Recalling that $G$ has maximum degree $\Delta$ and using Theorem 4.2 in \cite{CS:SpectrumBirthDeath}, we see that
\begin{equation*}
\frac{1}{\tilde{\lambda}} \geq \frac{1}{16 \Delta} \sum_{i=1}^{n} \frac{1}{\tilde{\pi}(i)} \asymp  \sum_{i=1}^{n} \Big( \sum_{j=1}^n \prod_{k=j}^{n-2} (\deg_{n-k-1} - 1) \Big) \prod_{k=i}^{n-2} (\deg_{n-k-1} - 1)^{-1} \ .
\end{equation*} 
This gives the desired lower bound on the relaxation time of the tree $G_n$ which is of the same order as the upper bound on the mixing time in \eqref{eq:HittingTimeSpherically}. Hence, using Lemma \ref{lem:ProductCriterion}, we see that no cutoff occurs.
\end{proof}

\begin{remark}\label{rem:NecessaryRetraction} Using Proposition \ref{pro:NoCutoffSpherically}, we can see that taking $v_n^*$-retractions in Theorem \ref{thm:CutoffCompactification} is necessary for cutoff. More precisely, consider the spherically symmetric tree $G$ with 
\begin{equation}\deg_i = \begin{cases} 3 & \text{ if } i=2^{j}-1 \text{ for some } j \in \{0,1,\dots \} \\
2 & \text{ else}
\end{cases}
\end{equation} for all $i \in \N$. The corresponding trees $(G_n)_{n \geq 0}$ truncated at level $n$ satisfy \eqref{eq:HyperbolicGrowth}, but due to Proposition \ref{pro:NoCutoffSpherically}, the simple random walk on $(G_n)$ does not exhibit cutoff.
\end{remark}

\subsection{Galton--Watson trees conditioned on survival}  \label{sec:GaltonWatsonSurvival}

In this section we consider a family of random trees $(G_n)_{n \geq 1}$ which we obtain by truncating a supercritical Galton--Watson tree conditioned on survival. More precisely, let 
$\mu$ be an \textbf{offspring distribution} and assume that
\begin{equation}\label{eq:AssumptionsGWT1}
m:= \sum_{j=1}^{\infty}j \mu(j) >1 \qquad \sigma^2 :=\sum_{j=1}^{\infty}j^2 \mu(j)  \in(0,\infty), 
\end{equation} i.e.\ we have a supercritical Galton--Watson process whose offspring distribution has finite variance. In the following, we take the genealogical tree $G$ of a realization of the Galton--Watson process conditioned on survival. We then obtain the trees $(G_n)_{n \geq 1}$ by restricting $G$ onto the sites $V \setminus V_n$ (recalling \eqref{V_m}). We keep the sequence of trees fixed and  perform simple random walks on $(G_n)_{n \geq 1}$. We denote the law of $G$ by $P$ ($P$ depends on $\mu$, but we will not write it). We now have the following result on cutoff for the simple random walk on $(G_n)_{n \geq 1}$. 
\begin{proposition}\label{pro:NoCutoffGWT1} We have that for $P$-almost all trees $G$, the family of simple random walks on $(G_n)_{n \geq 1}$ does not exhibit cutoff.
\end{proposition}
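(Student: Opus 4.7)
The approach is to apply Theorem~\ref{thm:NoCutoff} by verifying, almost surely on survival, that both sides of the criterion \eqref{eq:NoCutoffCriterion} are of the same order $m^n$. Throughout I use the paper's convention that $o_n$ is a $\delta$-center of mass of $G_n$, the existence of which is guaranteed by Proposition~\ref{pro:CenterOfMass}.

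First I would invoke the Kesten--Stigum theorem: under~\eqref{eq:AssumptionsGWT1}, $Z_k/m^k$ converges almost surely to a random variable $W$ with $W>0$ on survival and $\E[W^2]<\infty$. Hence $|V(G_n)|=\sum_{k=0}^{n} Z_k \asymp m^n$ almost surely on survival, and by the center-of-mass property one edge adjacent to $o_n$ leads into a subtree of size $\gtrsim m^n$. This already yields the lower bounds $\max_e|e||T_e|\gtrsim m^n$ and $\max_v\sum_{e\in\ell(v)}|T_e|\gtrsim m^n$.

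The main technical step is a uniform almost-sure upper bound on the maximal subtree size at each generation. For each $u$ with $|u|=k$, the restriction of $T_u$ to generations $\leq n$ is, conditionally on the first $k$ generations, an independent truncated supercritical Galton--Watson tree of depth $n-k$, and hence the associated Kesten--Stigum limits $W^{(u)}$ are, conditionally, i.i.d.\ copies of $W$. Chebyshev applied to each $W^{(u)}$, a union bound over the $Z_k\asymp m^k$ vertices of generation $k$, and Borel--Cantelli yield, for any $\varepsilon>0$,
\begin{equation*}
\max_{u \colon |u|=k} |T_u| \;\leq\; C(\omega)\, m^{\,n-(1/2-\varepsilon)k}
\qquad \text{uniformly in } k\leq n, \ n\in\N,
\end{equation*}
for some almost surely finite $C(\omega)$ on survival.

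With this estimate in hand, $|e||T_e|\leq k\,C(\omega)m^{n-(1/2-\varepsilon)k}$ is maximized at $k=O(1)$ because the exponential decay in $k$ dominates the linear factor, so $\max_e|e||T_e|=O(m^n)$; likewise, along any root-to-$v$ path the sum $\sum_{e\in\ell(v)}|T_e|\leq C(\omega)\sum_{k=1}^{n}m^{n-(1/2-\varepsilon)k}$ is a convergent geometric series, hence also $O(m^n)$. Combined with the matching lower bounds, both quantities are of order $m^n$ almost surely on survival; the criterion \eqref{eq:NoCutoffCriterion} is thus satisfied, and Theorem~\ref{thm:NoCutoff} yields the absence of cutoff. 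The main obstacle is the uniform-in-$k$-and-$n$ almost-sure control of $\max_{|u|=k}|T_u|$; this is exactly where the finite-variance hypothesis $\sigma^2<\infty$ is essential, as it provides the second moment of $W$ needed for the union bound and Borel--Cantelli step.
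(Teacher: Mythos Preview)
Your core estimate---controlling $\max_{|u|=k}|T_u|$ via Kesten--Stigum, the second moment of the martingale supremum, a union bound over generation $k$, and Borel--Cantelli---is precisely what the paper does (compare its random variables $Y_v$). The packaging differs: the paper bounds $t^n_{\textup{mix}}$ directly via Proposition~\ref{pro:HittingtimeUpper}, cites Jonasson for $t^n_{\textup{rel}}\asymp m^n$, and concludes by Lemma~\ref{lem:ProductCriterion}, whereas you route everything through Theorem~\ref{thm:NoCutoff}. Your center-of-mass observation for $t^n_{\textup{rel}}\gtrsim m^n$ is a nice self-contained replacement for the Jonasson citation.

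There is, however, a real gap. Having fixed the root as the center of mass $o_n$, the quantities $|u|$, $T_u$, $\ell(v)$ in \eqref{eq:NoCutoffCriterion} are measured from $o_n$; but your claim that $T_u$ is an independent truncated Galton--Watson tree of depth $n-k$ is only valid when $|u|=k$ is the \emph{genealogical} generation, i.e.\ distance from the Galton--Watson root $\rho$. In general $\rho\neq o_n$ (take any $\mu$ with $\mu(1)>0$, or any realization where one first-generation subtree carries more than two thirds of the mass), so your upper bounds are actually established relative to $\rho$, not $o_n$, and Theorem~\ref{thm:NoCutoff} does not apply as stated. The fix is to bypass Theorem~\ref{thm:NoCutoff} and use its ingredients with two different roots: your estimate relative to $\rho$ still bounds $t^n_{\textup{mix}}$ via Proposition~\ref{pro:HittingtimeUpper} (which the paper explicitly notes holds for any root), and your center-of-mass bound handles $t^n_{\textup{rel}}$ via Proposition~\ref{pro:LowerBoundRelaxationTime}; Lemma~\ref{lem:ProductCriterion} then finishes. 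This is exactly the paper's structure. A smaller point you elide: conditioning on survival changes the law of the tree, and the paper addresses this explicitly via the multi-type representation of the conditioned process.
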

\begin{proof}
From the proof of Theorem 1.4(b) in \cite{J:MixingInterchange}, we know that the relaxation time is $P$-almost surely of order $m^n$ for all $n$ sufficiently large. Hence, by Lemma \ref{lem:ProductCriterion}, it suffices to give an upper bound on the mixing time of order $m^n$ for the random walk on $G_n$ in order to exclude cutoff. Providing an upper bound on the mixing time of order $m^n$ answers a question of Jonasson \cite{J:MixingInterchange}. Recall that for any supercritical Galton--Watson tree $G=(V,E)$ with offspring distribution $\mu$ satisfying \eqref{eq:AssumptionsGWT1},
\begin{equation}\label{eq:MartingaleGenerations}
\left(  \frac{|\{v \in V(G) \colon |v|=n\}|}{m^{n}}\right)_{n \geq 1}
\end{equation} is a martingale which converges $P$-almost surely and in $L^2$, due to the Kesten-Stigum Theorem.
For all $v\in V$ and $N\geq 0$ let $k(v,N)$ be the number of sites in the tree  $T_v$ with distance at most $|v|+N$ from the root. From \eqref{eq:MartingaleGenerations}, we see that for all $v\in V$ 
\begin{equation}\label{def:MartingaleLimit}
\sup_{N \in  \N} \frac{k(v,N)}{m^{N}} = Y_v 
\end{equation} $P$-almost surely for some random variable $Y_v$. It is easy to show, applying Doob's inequality to the martingale in \eqref{eq:MartingaleGenerations}, that $Y_v$ has a finite second moment. \\

Recall that Proposition \ref{pro:HittingtimeUpper} does not require the root to be a $\delta$-center of mass, and hence 
\begin{equation}\label{eq:MixingBoundGWT1}
P\left( t^n_{\textup{mix}}\left(\frac{1}{4}\right) \leq T \right)\leq P\left( C\sum_{s =0}^{n-1} m^{n-s} \max_{|v|=s}Y_v \leq T \right)
\end{equation} holds $P$-almost surely for all $T>0$ and some constant $C>0$. It remains to give a bound on the random variables $Y_v$. Note that the random variables $Y_v$ do not depend on the number of sites in the generation $|v|$ in the Galton--Watson tree. Hence, writing $E$ for the expectation corresponding to $P$, we see that
\begin{align*}
 P\Big(\max_{|v|=s}Y_v > \frac{m^s}{2s^2} \Big) &\leq P\left(|\{v\colon |v|=s\}| > s^2m^s\right) + s^2m^n P\Big(Y_v > \frac{m^s}{s^2} \Big) \\
 &\leq \frac{E\left[ | \{v \colon |v|= s \}|\right]}{s^2m^{s}} + s^2m^{s}\frac{E\left[ Y^{2}_o\right]}{s^{-4}m^{2s}}\leq c s^{-2}
\end{align*} is satisfied for all $s\in \N$ and some constant $c>0$. Together with  \eqref{eq:MixingBoundGWT1} and the first Borel--Cantelli lemma, we see that $P$-almost surely
\begin{equation*}
 t^n_{\textup{mix}}\left(\frac{1}{4}\right) \leq \tilde{c} m^{n}
\end{equation*}  holds for some $\tilde{c}>0$ depending on $G$ and all $n$ sufficiently large. Note that a bound of the same form follows when conditioning the underlying Galton--Watson tree on survival, using its representation as a multi-type Galton--Watson process with one child in every generation having a size-biased offspring distribution, see Chapter 12 of \cite{LP:ProbOnTrees}. Together with the corresponding bound on the relaxation time for the random walk on $(G_n)$ of order $m^n$ from \cite{J:MixingInterchange}, this finishes the proof.
\end{proof}

\begin{remark} For critical or subcritical Galton--Watson trees, i.e.\ when $m\leq 1$ holds in \eqref{eq:AssumptionsGWT1}, one can consider the family of random walks on $(G_n)$ which we obtain from the associated Kesten tree truncated at generation $n$, see \cite{K:KestenTree} for a formal definition of the tree and \cite{J:SurveyTrees} for a more comprehensive discussion. Note that the resulting tree consists of a segment of size $n$ with almost surely constant size trees attached. Using Theorem \ref{thm:NoCutoff}, one can show that there is $P$-almost surely no cutoff.
\end{remark}

\subsection{Combinatorial trees converging to the CRT} \label{sec:GaltonWatsonTotal}

In the previous two examples, we considered an infinite tree, which we truncated at generation $n$ in order to obtain the family of trees $(G_n)_{n \geq 1}$. We now take a different perspective and study the simple random walk on a sequence of random  trees, where we assume that $G_n$ has exactly $n$ sites. \\

For each tree $(G_n)$, we assign a labeling to the $n$ sites chosen uniformly at random, and declare the vertex with label $1$ to be the root of $(G_n)$. Let $s \colon \{ 1, \dots, 2n-1\} \rightarrow V_n$ be the \textbf{contour function} of $G_n$, which is given as the walk on $V_n$ when performing depth-first search on $G_n$, giving priority to sites with a smaller label, see also Section 2.6 in \cite{A:CRT3}. Intuitively, we obtain $s$ by embedding $G_n$ into the plane such that the shortest path distance is preserved and sites with a common ancestor are ordered increasingly according to their labels, see Figure \ref{fig:ContourCap}. We will write $|s(\cdot)|$ for the distance of $s(\cdot)$ to the root, and call $|s|\colon \{ 1, \dots, 2n-1\} \rightarrow\N $ again contour function, with a slight abuse of notation.
For $c>0$, consider the normalized contour function $\tilde{s}:[0,1] \rightarrow \R$ given by
\begin{equation}
\tilde{s}_n\left(\frac{i}{2n}\right):=c n^{-\frac{1}{2}}|s(i)|
\end{equation} for all $i\in \{1,\dots,2n-1\}$, $\tilde{s}_n(0)=\tilde{s}_n(1)=0$ and linear interpolation between these values. 
We say that a family of random trees $(G_n)_{n \geq 1}$ \textbf{converges to the CRT for \textit{c}} if $(\tilde{s}_n)_{n \geq 1}$ converges in distribution (in the space $\mathcal{C}([0,1])$) to the Brownian excursion $(e(t))_{t \in [0,1]}$.
Note that the terminology CRT refers to the Brownian continuum random tree, which can be seen as the limit object of $(G_n)_{n \geq 1}$, see \cite{A:CRT1,A:CRT2, A:CRT3} for an introduction and equivalent definitions, and \cite{HM:ScalingUnlabelled,MM:ScalingBinary} for examples of trees converging to the CRT. \\

Arguably the most prominent example of such a sequence of trees are independently chosen critical Galton--Watson trees 
$(G_n)_{n\geq 1}$ conditioned on having exactly $n$ sites, where we assume that
\begin{equation}\label{eq:AssumptionsGWT2}
\sum_{j=1}^{\infty}j \mu(j) =1 \qquad \sum_{j=1}^{\infty}j^2 \mu(j) =:\sigma^2 \in(0,\infty) \qquad \gcd(j>0 \colon \mu(j)>0)=1
\end{equation} holds, i.e.\ the number of offspring has mean $1$, finite variance and is not supported on a sublattice of the integers. Note that the assumption of having $m=1$ is not a restriction as we can transform any given offspring distribution with positive mean into a critical offspring distribution without changing the law of the resulting graphs when conditioning on a given number of sites, see Section 2.1 in \cite{A:CRT2}. The following lemma is a classical result due to Aldous.
\begin{lemma}[c.f.\ Theorem 23 in \cite{A:CRT3}]\label{lem:AldousScaling} Under the assumptions in \eqref{eq:AssumptionsGWT2}, we have that $(G_n)_{n\geq 1}$ converges to the CRT for  $c=\sigma/2$.
\end{lemma}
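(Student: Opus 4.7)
The plan is to invoke the classical encoding of $G_n$ by a random walk on $\mathbb{Z}$ and apply a conditioned invariance principle. Concretely, for a critical Galton--Watson tree with offspring distribution $\mu$, the Lukasiewicz (depth-first) walk $(L_k)_{k=0}^{n}$ defined by $L_k = (\xi_1-1)+\cdots+(\xi_k-1)$, where $\xi_i$ is the number of children of the $i$-th vertex visited in depth-first order, is a random walk with centered increments of variance $\sigma^2$. The tree has exactly $n$ sites if and only if $L$ first hits $-1$ at step $n$, so conditioning on $|V(G_n)|=n$ translates into conditioning this walk on $\{\min\{k : L_k=-1\}=n\}$.

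First I would invoke the Kaigh--Iglehart--Bolthausen-type invariance principle for a centered random walk with finite variance $\sigma^2$ conditioned to first hit $-1$ at time $n$: after rescaling time by $n$ and space by $\sigma\sqrt{n}$, the linearly interpolated path converges in $\mathcal{C}([0,1])$ to a standard Brownian excursion $e$. The non-lattice hypothesis $\gcd\{j>0 : \mu(j)>0\}=1$ in \eqref{eq:AssumptionsGWT2} is precisely what yields the local central limit theorem needed to render the conditioning legitimate.

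Next I would transfer this to the contour function $|s|$. Every edge of $G_n$ is traversed exactly twice by the depth-first search, so $|s|$ is defined on $\{1,\dots,2n-1\}$. The classical identity relates $|s(i)|$ to the Lukasiewicz walk through the number of weak descending ladder indices before time $i$, and one shows that the suitably time-changed height profile of $L$ and the contour $|s|$ are uniformly close after rescaling. Combining the factor $\sigma$ from the invariance principle with the factor $\tfrac{1}{2}$ coming from the fact that the contour has length $2n$ rather than $n$ (so that horizontal compression by $2n$ onto $[0,1]$ induces an extra $\tfrac{1}{2}$ in the vertical normalization) gives the constant $c=\sigma/2$.

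The main obstacle is the uniform approximation between $|s|$ and the Lukasiewicz height profile, which requires controlling the maximal vertex degree in $G_n$ and the maximal excursion of $L$ away from its running minimum; both are handled by tail estimates that rely on the finite-variance assumption. Since this analysis is carried out in full detail in Aldous's work, the cleanest presentation of the proof is to cite Theorem 23 of \cite{A:CRT3} directly.
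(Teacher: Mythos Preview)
The paper gives no proof of this lemma at all; it is stated purely as a citation of Theorem~23 in \cite{A:CRT3}. Your proposal ultimately lands in exactly the same place, so in that sense you and the paper agree.

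The sketch you give before deferring to Aldous is the by-now standard route (Lukasiewicz walk plus a conditioned invariance principle, then transfer to the contour function), which is due more to Marckert--Mokkadem and Duquesne than to Aldous's original argument; but that is a historical remark, not a mathematical objection. One small imprecision: the factor $\tfrac{1}{2}$ in $c=\sigma/2$ does not arise from the horizontal compression of $\{1,\dots,2n-1\}$ onto $[0,1]$. The contour and the height process share the same vertical scale (they record the same depths, just traversed at different speeds), so rescaling time by $2n$ versus $n$ does not change the vertical normalization. The factor $2/\sigma$ in the limit of the height process---and hence the $\sigma/2$ in front---comes from the relation $H_k \approx \tfrac{2}{\sigma^2}\bigl(L_k-\min_{j\leq k}L_j\bigr)$ between the height process and the Lukasiewicz walk, combined with the $\sigma$ from the invariance principle for $L$. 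Since you are citing the result anyway this does not affect correctness, but the heuristic you gave for the constant is off.
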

Note that for critical Galton--Watson trees, mixing and relaxation times were studied by Jonasson in \cite{J:MixingInterchange}, relying on estimates of the effective resistance between the root and the leaves. We now present an alternative proof of his results, which extends to more general families of combinatorial trees. In the following, we first choose the trees $(G_n)_{n \geq 1}$, keep them fixed and then perform simple random walks on $(G_n)_{n \geq 1}$. We denote the law of $(G_n)_{n \geq 1}$ by $P$.
\begin{proposition}\label{pro:NoCutoffGWT2} Let $(G_n)_{n \geq 1}$ be a family of random trees converging to the Brownian CRT for some $c$. Then we have that $P$-almost surely, the family of simple random walks on $(G_n)_{n \geq 1}$ does not exhibit cutoff.
\end{proposition}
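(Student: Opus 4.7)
The plan is to apply Theorem \ref{thm:NoCutoff} by showing that $P$-almost surely both sides of \eqref{eq:NoCutoffCriterion} are of order $n^{3/2}$. As in Sections \ref{sec:Spherically} and \ref{sec:GaltonWatsonSurvival}, I first re-root each $G_n$ at a $\frac{1}{3}$-center of mass, whose existence is guaranteed by Proposition \ref{pro:CenterOfMass}; this does not affect whether cutoff occurs, since the mixing time in \eqref{def:MixingTime} is a supremum over starting states. By Skorokhod's representation theorem, I may assume the rescaled contour functions $\tilde s_n$ of the contour-rooted trees converge uniformly on $[0,1]$ to a Brownian excursion $e$ $P$-almost surely.

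For the upper bound, $\textup{diam}(G_n)$ is root-invariant and equals $\max_i |s(i)|$, which by uniform convergence satisfies $\textup{diam}(G_n) \leq C(\omega)\sqrt n$ $P$-almost surely. Combined with $|T_e| \leq n$ and $|\ell(v)| \leq \textup{diam}(G_n)$, this yields
\begin{equation*}
\max_e |e| |T_e| \leq \max_v \sum_{e \in \ell(v)} |T_e| \leq n \cdot \textup{diam}(G_n) = O(n^{3/2})
\end{equation*}
$P$-almost surely, where the first inequality is a general identity holding for any tree since $|T_{e'}| \geq |T_e|$ whenever $e' \in \ell(e_+)$.

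For the matching lower bound $\max_e |e| |T_e| = \Omega(n^{3/2})$, which is the technical heart of the proof, I exploit that the Brownian excursion almost surely has positive maximum $H$ and that, by path continuity, for any $0 < h < H$ the level set $\{t : e(t) \geq h\}$ contains a connected component of strictly positive Lebesgue measure. Under the contour-function to tree dictionary, a maximal interval on which $|s|\geq k$ corresponds to the visit of a subtree rooted at a vertex of depth $k$, with interval length proportional to the subtree size. Uniform convergence of $\tilde s_n$ thus produces, for $n$ large, a vertex $v_n^\ast$ with $|v_n^\ast| \geq c_1(\omega)\sqrt n$ and $|T_{v_n^\ast}| \geq c_2(\omega)\, n$, and choosing $e$ to be the edge of $\ell(v_n^\ast)$ adjacent to $v_n^\ast$ gives $|e||T_e|\gtrsim n^{3/2}$. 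To transfer this to the re-rooted tree, one uses that the center of mass splits $G_n$ into two subtrees each of size $\geq \frac{1}{3}n$, at least one of which corresponds to an excursion of $e$ of length $\geq \frac{1}{3}$; the same level-set argument applied to that sub-excursion then produces an analogous vertex with depth and subtree size of the required order measured from the new root.

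The main obstacle is making the lower bound rigorous: one must carefully translate the continuous level-set property of the Brownian excursion into a concrete combinatorial vertex with simultaneously large depth and large subtree, and then verify that this property survives re-rooting at a center of mass by restricting the contour convergence to each of the two large sub-excursions. Once both bounds are established, the hypotheses of Theorem~\ref{thm:NoCutoff} are satisfied $P$-almost surely, which yields the absence of cutoff and completes the proof.
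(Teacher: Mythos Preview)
Your use of Skorokhod's representation theorem is the gap. The hypothesis is only that $\tilde s_n\Rightarrow e$ in distribution, with no assumption on the joint law of $(G_n)_{n\ge1}$ under $P$. Skorokhod yields almost-sure convergence on a \emph{new} space whose joint law need not coincide with $P$, whereas ``$(G_n)$ exhibits cutoff'' is a property of the whole sequence and therefore depends on that joint law. In particular, the claim $\textup{diam}(G_n)\le C(\omega)\sqrt n$ $P$-a.s.\ can fail outright: if the $G_n$ are independent (which the hypothesis certainly permits), then $\textup{diam}(G_n)/\sqrt n$ converges in law to a multiple of $\max_t e(t)$, whose support is unbounded, and the second Borel--Cantelli lemma forces $\limsup_n \textup{diam}(G_n)/\sqrt n=\infty$ almost surely. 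Hence Theorem~\ref{thm:NoCutoff}, which asks for the two sides of \eqref{eq:NoCutoffCriterion} to be comparable with \emph{deterministic} constants along the entire sequence, cannot be applied $\omega$ by $\omega$ from the assumptions given.

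The paper sidesteps this by never asserting almost-sure order bounds. It works only in probability: for every $\varepsilon>0$ there exist $c_\varepsilon,C_\varepsilon$ with $P\big(t^n_{\textup{mix}}(\tfrac14)\le C_\varepsilon n^{3/2}\big)\ge 1-\varepsilon$ and $P\big(t^n_{\textup{rel}}\ge c_\varepsilon n^{3/2}\big)\ge 1-\varepsilon$ for all large $n$. Your diameter bound and your level-set/subtree idea are essentially the right inputs for these two estimates, but they must be stated with high probability rather than almost surely; the paper also handles the re-rooting issue not by an abstract $\tfrac13$-center but by constructing an explicit $\delta$-center of mass $o_\ast$ directly from the contour on a high-probability event. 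From the resulting tightness of $Z_n:=t^n_{\textup{mix}}/t^n_{\textup{rel}}$ a short soft argument then yields $P(\liminf_n Z_n<\infty)=1$, which by Lemma~\ref{lem:ProductCriterion} is exactly the absence of cutoff. So your ingredients are the correct ones, but they have to feed into the product criterion through this tightness/$\liminf$ step rather than through Theorem~\ref{thm:NoCutoff}.
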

\begin{figure} \label{fig:Contour}
\begin{center}
\begin{tikzpicture}[scale=0.7,	
  declare function={
    func(\x)= (\x < 1) * (0)   +
              and(\x >= 1, \x < 4) * (\x-1)     +
              and(\x >= 4, \x < 5) * (6-\x+1)     +
              and(\x >= 5, \x < 6) * (\x-2-1)     +
              and(\x >= 6, \x < 7) * (8-\x+1)     +
              and(\x >= 7, \x < 9) * (\x-4-1)     +
              and(\x >= 9, \x < 13) * (12-\x+1)     +
              and(\x >= 13, \x < 15) * (\x-12-1)     +
              and(\x >= 15, \x < 17) * (16-\x+1)     +
              and(\x >= 17, \x < 20) * (\x-16-1)     +
              and(\x >= 20, \x < 23) * (22-\x+1)     +
              and(\x >= 23, \x < 25) * (\x-22-1)     +
              and(\x >= 25, \x < 27) * (26-\x+1)     +
              (\x >= 27) * (0)  ;
  }
]

 	\node[shape=circle,scale=0.8,draw]  (B) at (3,0){$1$} ;
 	
 	\node[shape=circle,scale=0.8,draw] (A1) at (0,1.5){$3$} ;
	\node[shape=circle,scale=0.68,draw] (A2) at (0,3){$13$} ;
 	\node[shape=circle,scale=0.8,draw] (A3) at (-1.1,4.5){$2$} ;		
	\node[shape=circle,scale=0.8,draw] (A4) at (0,4.5){$9$} ;
 	\node[shape=circle,scale=0.68,draw] (A5) at (1.1,4.5){$10$} ;		
 	
	\node[shape=circle,scale=0.8,draw] (B1) at (3-0.8,1.5){$4$} ;
 	\node[shape=circle,scale=0.8,draw] (B2) at (3.8,1.5){$7$} ;
	\node[shape=circle,scale=0.68,draw] (B3) at (3-0.8,3){$12$} ;
 	\node[shape=circle,scale=0.8,draw] (B4) at (3.8,3){$5$} ;		
 	  	
 	\node[shape=circle,scale=0.68,draw] (C1) at (1.1,6){$11$} ;
	\node[shape=circle,scale=0.8,draw] (C2) at (3.8,4.5){$8$} ;
 	\node[shape=circle,scale=0.68,draw] (C3) at (6,1.5){$14$} ;		
	\node[shape=circle,scale=0.8,draw] (C4) at (6,3){$6$} ;

 	  	\draw[thick] (A1) to (B);		
	
 		\draw[thick] (A1) to (A2);		
 		\draw[thick] (A2) to (A3);	
 		\draw[thick] (A2) to (A4);	
 		\draw[thick] (A2) to (A5);	
 		
 	  	\draw[thick] (B) to (B1);		
 		\draw[thick] (B) to (B2);		
 		\draw[thick] (B1) to (B3);	
 		\draw[thick] (B2) to (B4);		
 	 	
 		\draw[thick] (A5) to (C1);		
 		\draw[thick] (B4) to (C2);		
 		\draw[thick] (B) to (C3);		
 		\draw[thick] (C3) to (C4);

 

 	 	\node[scale=1]  at (3.5,-0.5){$o$};
 
 	\node[scale=0.9]  at (10.1,5.7){$|s(i)|$};
 	\node[scale=0.9]  at (17,-0.5){$i$};
    \begin{axis}[xshift=9.5cm,
        domain=0:28,
        xmin=0, xmax=28,
        ymin=0, ymax=4.1, 
        samples=400, 
        axis y line=center,
        axis x line=middle,
        scale=1.1
    ]

\addplot [RoyalBlue,thick] {func(x)};
\end{axis}

\end{tikzpicture}
\end{center}
\caption{\label{fig:ContourCap} Visualization of the linear interpolation of the contour function $|s| \colon \{1,\dots,27\} \rightarrow \N$ for the graph $G$ given on the left-hand side with $n=14$ sites. } 
\end{figure}
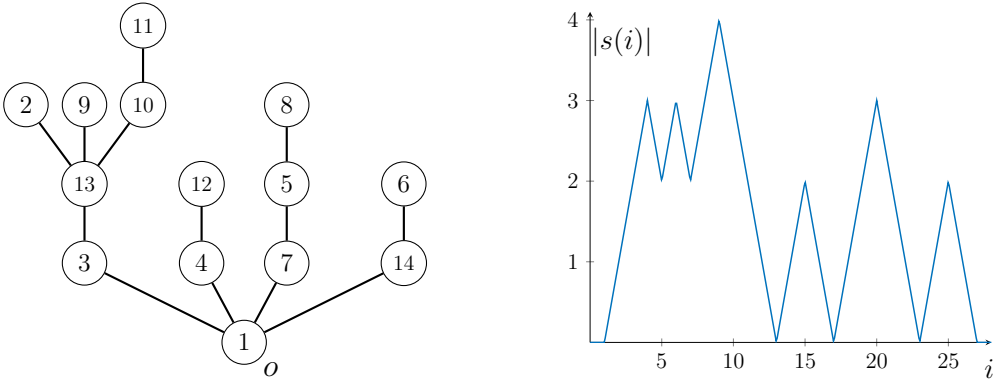
Intuitively, the absence of cutoff is explained by the fact that  the law of the random walk on $(G_n)_{n \geq 1}$ converges to the law of a Brownian motion on the CRT, indicating a smooth decay of the total-variation distance for the random walk at a time of order $N^{3/2}$.
In order to prove Proposition \ref{pro:NoCutoffGWT2}, we will show
that $P$-almost surely, \eqref{seminalcomp} is not satisfied. First, we will prove that the mixing time and the relaxation time are, for all $n$ sufficiently large, with positive probability both of order $n^{3/2}$. 
We start with the following upper bound on the mixing time.
\begin{lemma}\label{lem:MixingTimeGWT} Let $(G_n)$ be as in Proposition \ref{pro:NoCutoffGWT2}. Then,
for all $\varepsilon>0$, there exists a constant $C_{\varepsilon}$ such that for $n$ sufficiently large,
\begin{equation}
P\left( t^n_{\textup{mix}}\left(\frac{1}{4}\right) \leq  C_\varepsilon n^{\frac{3}{2}} \right) \geq 1 -  \varepsilon\, .
\end{equation} 
\end{lemma}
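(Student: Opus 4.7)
The strategy is to combine the diameter-based estimate \eqref{mixdiam} from Proposition \ref{pro:HittingtimeUpper}, namely $t_{\textup{mix}}(\varepsilon) \leq 2C\log(\varepsilon^{-1})|V|\,\textup{diam}(G)$, with the convergence of the rescaled contour function to the Brownian excursion in order to control the diameter of $G_n$. Since $|V(G_n)| = n$, any bound of the form $\textup{diam}(G_n) = O(\sqrt{n})$ with high probability translates directly into the desired $O(n^{3/2})$ bound on the mixing time. A convenient feature is that \eqref{mixdiam} does not require the root to be a $\delta$-center of mass, so I am free to work with the root picked in the definition of the contour function.

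The diameter is at most twice the height of $G_n$, since $\textup{dist}(u,v) \leq |u|+|v|$ for every pair of vertices; hence $\textup{diam}(G_n) \leq 2 \max_{v \in V(G_n)} |v| = 2\,\|s\|_\infty$, where $s$ is the contour function of $G_n$ and $\|\cdot\|_\infty$ denotes the supremum over the $2n-1$ entries. By hypothesis, the rescaled contour function $\tilde{s}_n$ converges in distribution in $\mathcal{C}([0,1])$ to the Brownian excursion $e$. Since the supremum norm is continuous on $\mathcal{C}([0,1])$, the continuous mapping theorem yields $\|\tilde{s}_n\|_\infty \Rightarrow \|e\|_\infty$, which is almost surely finite. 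Consequently the family $(\|\tilde{s}_n\|_\infty)_{n\geq 1}$ is tight, so for every $\varepsilon > 0$ there exists $M_\varepsilon > 0$ with $P(\|\tilde{s}_n\|_\infty \leq M_\varepsilon) \geq 1-\varepsilon$ for all $n$ sufficiently large. Unwinding the rescaling $\tilde{s}_n(i/(2n)) = cn^{-1/2}|s(i)|$ shows that on this event, $\|s\|_\infty \leq c^{-1}M_\varepsilon \sqrt{n}$, and therefore $\textup{diam}(G_n) \leq 2c^{-1}M_\varepsilon \sqrt{n}$.

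Plugging this diameter bound into \eqref{mixdiam} with mixing-time parameter $1/4$ yields the lemma with $C_\varepsilon = 4c^{-1}M_\varepsilon C \log 4$. There is no real obstacle here: the lemma is essentially the observation that the crude bound $|T_e| \leq n$ combined with the $O(\sqrt{n})$ height of trees converging to the CRT is already sharp enough to reach the correct order $n^{3/2}$. The only probabilistic input is tightness of $\|\tilde{s}_n\|_\infty$, which follows immediately from weak convergence to the Brownian excursion and the a.s.\ finiteness of $\|e\|_\infty$.
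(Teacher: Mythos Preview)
Your proof is correct and follows essentially the same approach as the paper: bound the diameter by twice the height $\max_i |s(i)|$, use convergence of the rescaled contour function to the Brownian excursion to get $\textup{diam}(G_n)=O(\sqrt{n})$ with high probability, and plug into \eqref{mixdiam}. You are slightly more explicit about the continuous mapping/tightness step, but the argument is the same.
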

\begin{proof} Note that the tree $G_n$ satisfies $\textup{diam}(G_n) \leq 2 \max_{i \in \{1,\dots,2n-1\}}s(i)$.  From Lemma \ref{lem:AldousScaling}, we see that for all $\varepsilon>0$, there is $c_1 = c_1(\varepsilon) > 0$ such that for $n$ large enough,
\begin{equation}
P\Big(\textup{diam}(G_n) \geq  c_1 n^{\frac{1}{2}}\Big) < \varepsilon\, .
\end{equation} 
Now we use \eqref{mixdiam} from Proposition \ref{pro:HittingtimeUpper} to conclude.
\end{proof}
It remains to show a corresponding lower bound of order $n^{3/2}$ for the relaxation time of the simple random walk on $G_n$, which requires a bit of setup. We start by giving some statements for the Brownian excursion. We then carry over these observations to trees using Lemma \ref{lem:AldousScaling}. More precisely, we first choose a new root $o_{\ast}$, which will be a $\delta$-center of mass. We then show that there exists a site $v$ at distance of order $\sqrt{n}$ from the new root with order $n$ many sites in the tree $T_v$. Note that a Brownian excursion $(e(t))_{t \in [0,1]}$ attains almost surely its extrema in every compact subset of $[0,1]$. We let $t_{\min}$ and $t_{\max}$ be such that
\begin{equation}\label{def:ExtremValuesBE}
e(t_{\min}) = \min_{t \in \left[ \frac{1}{4},\frac{3}{4}\right]} e(t), \qquad e(t_{\max}) = \max_{t \in \left[ \frac{1}{4},\frac{3}{4}\right]} e(t) \ .
\end{equation} 
Consider, for $\theta> 0$ and $\delta > 0$, the events
\begin{equation}\label{eq:ExcursionArea}  B_1 := \left\lbrace e(t_{\min}) > \theta \right\rbrace \cap \left\lbrace \max_{t \in [0,\delta]} e(t) \leq \frac{\theta}{2}\right\rbrace
\end{equation}
and
\begin{equation}\label{eq:ExcursionBoundary}  
B_2 := 
\left\lbrace
 e(t) \geq 2\theta \text{ for all } t \in [t_{\max}-\theta ,t_{\max}+\theta] \right\rbrace \ ,
\end{equation}
see Figure \ref{fig:BrownianExCap} for a visualization.
\begin{lemma}\label{lem:ExcursionArea} For every $\varepsilon>0$, there exists some $\theta>0$ and some $\delta > 0$ 
such that $\P(B_1 \cap B_2 ) \geq 1- \varepsilon/2$.
\end{lemma}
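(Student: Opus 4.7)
The plan is to establish each of the events $B_1$ and $B_2$ individually with probability close to $1$ and then conclude by a union bound. The main ingredients are the almost-sure strict positivity of the Brownian excursion on compact subsets of $(0,1)$ and the continuity of its sample paths on $[0,1]$.

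I would first handle $B_1$. Since $e$ is strictly positive on $(0,1)$ almost surely, $e(t_{\min}) = \min_{t \in [1/4,3/4]} e(t) > 0$ a.s., so there exists a deterministic $\theta_0 > 0$ with $\P(e(t_{\min}) > \theta_0) \geq 1 - \varepsilon/8$. Since $e(0)=0$ and $e$ is continuous, $\sup_{t \in [0,\delta]} e(t) \to 0$ almost surely as $\delta \downarrow 0$, so for any fixed $\theta_0$ one can pick $\delta > 0$ small enough that $\P(\max_{t \in [0,\delta]} e(t) \leq \theta_0/2) \geq 1 - \varepsilon/8$. Combining these two estimates gives $\P(B_1) \geq 1 - \varepsilon/4$.

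For $B_2$, I would invoke uniform continuity of $e$ on $[0,1]$. The random variable $e(t_{\max})$ is a.s. strictly positive, so after possibly shrinking $\theta_0$ one has $\P(e(t_{\max}) > 4\theta_0) \geq 1 - \varepsilon/8$. Moreover, the modulus of continuity $\omega(e, 2\theta_0) := \sup\{|e(s) - e(t)| : s,t \in [0,1],\, |s-t| \leq 2\theta_0\}$ tends to $0$ almost surely as $\theta_0 \downarrow 0$, so by shrinking $\theta_0$ further we may also arrange $\P(\omega(e,2\theta_0) \leq \theta_0) \geq 1 - \varepsilon/8$. On the intersection of the latter two events we have, for every $t \in [t_{\max}-\theta_0, t_{\max}+\theta_0]$,
\[
e(t) \;\geq\; e(t_{\max}) - \omega(e, 2\theta_0) \;\geq\; 4\theta_0 - \theta_0 \;=\; 3\theta_0 \;>\; 2\theta_0,
\]
so $B_2$ holds with parameter $\theta = \theta_0$. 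A union bound then yields $\P(B_1 \cap B_2) \geq 1 - \varepsilon/2$.

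The only point requiring care is that the single parameter $\theta$ plays multiple roles simultaneously (lower bound on the minimum of $e$, lower bound on $e$ near $t_{\max}$, half-width of the interval around $t_{\max}$, and modulus-of-continuity scale). I would therefore pick $\theta_0$ small enough to satisfy all four constraints at once, using that each of the probability estimates above is monotone when $\theta_0$ is replaced by a smaller value; then $\delta$ is chosen last, depending on $\theta_0$. I do not foresee any substantive obstacle, since the argument is a purely analytic statement about sample paths of Brownian excursion and does not need any sharp quantitative input.
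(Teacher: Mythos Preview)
Your treatment of $B_1$ is fine and matches the paper's. The problem is your argument for $B_2$.

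You assert that, since $\omega(e,2\theta_0)\to 0$ almost surely as $\theta_0\downarrow 0$, one can shrink $\theta_0$ so that $\P\bigl(\omega(e,2\theta_0)\le \theta_0\bigr)\ge 1-\varepsilon/8$. This inference is invalid: the threshold $\theta_0$ on the right is going to $0$ at the same time, so you are asking the excursion to be Lipschitz at small scales. Brownian excursion inherits the local regularity of Brownian motion, and by L\'evy's modulus of continuity one has, almost surely,
\[
\omega(e,h)\;\asymp\;\sqrt{h\log(1/h)}\quad\text{as }h\downarrow 0,
\]
so $\omega(e,2\theta_0)/\theta_0\to\infty$ almost surely and hence $\P\bigl(\omega(e,2\theta_0)\le\theta_0\bigr)\to 0$, not to $1$. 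Your final monotonicity remark (``each of the probability estimates above is monotone when $\theta_0$ is replaced by a smaller value'') is therefore also wrong for this particular estimate, and the four constraints cannot be satisfied simultaneously by simply taking $\theta_0$ small.

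The repair is to decouple the value threshold from the modulus scale. Either argue directly, as the paper does, that
\[
\P\Bigl(\min_{|t-t_{\max}|\le\theta}e(t)\ge 2\theta\Bigr)\longrightarrow 1\qquad(\theta\downarrow 0),
\]
which follows because $\min_{|t-t_{\max}|\le\theta}e(t)\to e(t_{\max})>0$ by continuity while $2\theta\to 0$; or, in your framework, first fix $c>0$ with $\P\bigl(e(t_{\max})>2c\bigr)\ge 1-\varepsilon/8$, then choose $\theta_0\le c/2$ small enough that $\P\bigl(\omega(e,\theta_0)\le c\bigr)\ge 1-\varepsilon/8$, so that on the intersection $e(t)\ge 2c-c=c\ge 2\theta_0$ for $|t-t_{\max}|\le\theta_0$.
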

\begin{proof} Note that $e(t_{\max}) > e(t_{\min})>0$ holds almost surely by the construction of the Brownian excursion. Since 
$(e(t))_{t \in [0,1]}$ is continuous, and $e(\frac{1}{4})$ and
$e(\frac{3}{4})$ have densities with respect to the Lebesgue measure on $[0, \infty)$, the probability of the event
$$
\left\lbrace e(t_{\min}) > \theta \right\rbrace \cap \left\lbrace
 e(t) \geq 2\theta \text{ for all } t \in [t_{\max}-\theta ,t_{\max}+\theta] \right\rbrace
$$ goes to $1$ for $\theta \to 0$. Hence, choose $\theta$ such that this probability is at least $1- \varepsilon/4$.
Moreover, since $e(0)=0$ almost surely and $(e(t))_{t \in [0,1]}$ has continuous paths, we have that
for every $\varepsilon,\theta>0$, there exists some $\delta>0$ such that the event  
$$
\left\lbrace \max_{t \in [0,\delta]} e(t) \leq \frac{\theta}{2}\right\rbrace
$$
has probability at least $1- \varepsilon/4$. This implies that the event $B_1\cap B_2$ has probability at least $1- \varepsilon/2$. 
\end{proof}

\begin{figure} \label{fig:BrownianEx} 
\begin{center}
\begin{tikzpicture}[scale=0.9]

\pgfmathsetseed{1332}

 \node (A1) at (0.01*rand,0.01*rand+1){};
 \node (A2) at (0.01*rand,0.01*rand+1){};
 \node (A3) at (0.01*rand,0.01*rand+1){};
 \node (A4) at (0.01*rand,0.01*rand+1){};

\draw[RoyalBlue] (0,0) 
-- ++(0.02,0.15)
-- ++(0.02,-0.05)
-- ++(0.02,0.1)
-- ++(0.02,0.1)  
-- ++(0.02,-0.05)  
-- ++(0.02,+0.15)  
-- ++(0.02,-0.1)  
-- ++(0.02,0.2)  
-- ++(0.02,-0.15)  
-- ++(0.02,0.05)
\foreach \x in {1,...,740}
{   -- ++(0.02,-rand*0.2)
}-- ++(0.02,-0.23)     
-- ++(0.02,-0.15)  
-- ++(0.02,0)  
-- ++(0.02,-0.05)  
-- ++(0.02,0)  
-- ++(0.02,-0.05)
-- ++(0.02,-0.2)
-- ++(0.02,+0.05)
-- ++(0.02,-0.1);
\draw[thick] (0,0) -- (15.2,0);

\draw[thick,dashed] (15.2/4+0.32,0) -- (15.2/4+0.32,4);
\draw[thick,dashed] (3*15.2/4,0) -- (3*15.2/4,4);

\draw[thick] (15.2/4+0.32,0) -- (15.2/4+0.32,-0.3);
\draw[thick] (3*15.2/4,0) -- (3*15.2/4,-0.3);
\draw[thick] (15.2,0) -- (15.2,-0.3);
\draw[thick] (0,0) -- (0,-0.3);
\draw[thick] (0.15,0) -- (0.15,-0.3);

\draw[thick,dotted] (0.15,0) -- (0.15,2);
\draw[thick,dotted] (0,0) -- (0,2);

\draw (0,2) -- (0.5,2.5);
\draw (0.15,2) -- (0.5,2.5);

\node[scale=0.85] (Actilde) at (0.62,2.62){$\delta$} ;

\node[scale=3.42, rotate=270] (Abrac) at (7.17,2.7){$\}$} ;
\node[scale=0.85] (Amin35) at (7.17,2.3){$2\theta$} ;

\node[scale=1.7] (Abrac2) at (15.2/2,0.4){$\}$} ;
\node[scale=0.85] (Amin3) at (15.2/2+0.3,0.4){$\theta$} ;

\node[shape=circle,scale=0.15,fill] (Amin) at (10.77,1){$e_{\min}$} ;
\node[shape=circle,scale=0.15,fill] (Amin2) at (7.17,4.6){$e_{\max}$} ;

\draw[thick,dotted] (15.2/4+0.32,0.8) -- (3*15.2/4,0.8);
\draw[thick,dotted] (15.2/4+0.32,0.4) -- (0,0.4);

\node[scale=0.85] (Amin3) at (10.77,0.6){$e(t_{\min})$} ;
\node[scale=0.85] (Amin4) at (7.17,4.9){$e(t_{\max})$} ;

\draw[thick,dotted] (7.17-0.8,5) -- (7.17-0.8,3);
\draw[thick,dotted] (7.17+0.8,5) -- (7.17+0.8,3);
\draw[thick,dotted] (7.17-2.1,3) -- (7.17+0.99,3);

\node[scale=0.85] (A0) at (0,-0.49){$0$} ;
\node[scale=0.85] (A1) at (15.2/4+0.1,-0.35){$\frac{1}{4}$} ;
\node[scale=0.85] (A2) at (3*15.2/4-0.2,-0.35){$\frac{3}{4}$} ;
\node[scale=0.85] (A3) at (15.2,-0.49){$1$} ;

\node[scale=0.85] (Abrac2) at (15.2/8,0.2){$\mathbf{\}}$} ;
\node[scale=0.75] (Amin3) at (15.2/8+0.4,0.18){$\theta/2$} ;

\end{tikzpicture}
\end{center}
\caption{\label{fig:BrownianExCap} Visualization of the events $B_1$ and $B_2$ for the Brownian excursion. } 
\end{figure}
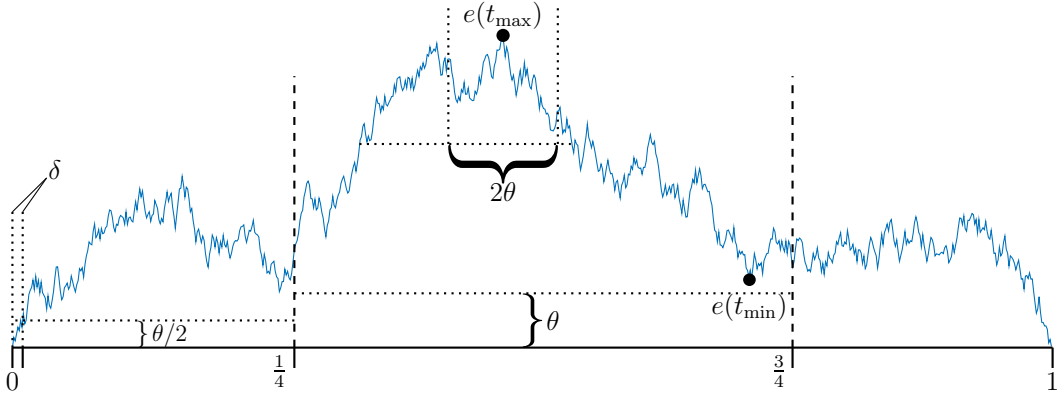
We now use Lemma \ref{lem:ExcursionArea}
to show the following bound on the relaxation time.
\begin{lemma}\label{lem:RelaxationTimeGWT}  For all $\varepsilon>0$, there exists some constant $c_{\varepsilon}>0$ such that for $n$ sufficiently large,
\begin{equation}
P\left( t^n_{\textup{rel}} \geq c_{\varepsilon} n^{\frac{3}{2}} \right) \geq 1 -  \varepsilon\, .
\end{equation} 
\end{lemma}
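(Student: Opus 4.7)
The plan is to produce, with probability at least $1-\varepsilon$ and for $n$ large, a vertex $v\in V(G_n)$ that is itself a $\delta$-center of mass together with an edge $e^*$ satisfying $|e^*|_v\cdot|T^v_{e^*}|\gtrsim n^{3/2}$; Proposition~\ref{pro:LowerBoundRelaxationTime} then yields the conclusion. By the Skorokhod representation theorem I may assume $\tilde{s}_n\to e$ almost surely in $\mathcal{C}([0,1])$, and using Lemma~\ref{lem:ExcursionArea} I fix $\theta,\delta>0$ so that $P(B_1\cap B_2)\geq 1-\varepsilon/2$. On this event, uniform convergence translates the Brownian-excursion bounds into two discrete-contour statements for $n$ large: (i) a shallow prefix, $|s_n(i)|\leq(\theta/c)\sqrt{n}(1+o(1))$ for every $i\in[0,2n\delta]$, and (ii) a deep plateau, $|s_n(i)|\geq(2\theta/c)\sqrt{n}(1-o(1))$ on a contour interval $I_n\subseteq[n/2,3n/2]$ of length at least $4n\theta(1-o(1))$.

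I take $v$ to be the last common ancestor, with respect to the original root $o_n$, of the vertices $\{s_n(i):i\in I_n\}$. Because the contour is a depth-first traversal, $|v|_{o_n}=\min_{i\in I_n}|s_n(i)|\geq(2\theta/c)\sqrt{n}(1-o(1))$, and the maximal contour-time interval during which $s_n(\cdot)$ stays in $T^{o_n}_v$ contains $I_n$, so $|T^{o_n}_v|\geq |I_n|/2+1\geq 2n\theta(1-o(1))$. To see that $v$ is a center of mass, note that in the first $2n\delta$ contour steps at least $n\delta+1$ distinct vertices are visited (forward edge-traversals outnumber backward ones at every time), and by (i) each such vertex has depth strictly less than $|v|_{o_n}$; none of them can lie in $T^{o_n}_v$, giving $|T^{o_n}_v|\leq(1-\delta)n$. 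Partitioning the neighbors of $v$ into the $o_n$-direction and the rest therefore exhibits $v$ as a $\min(\delta,2\theta)$-center of mass.

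Now re-root $G_n$ at $v$ and let $P=(w_0=o_n,w_1,\dots,w_D=v)$ be the ancestor path of $v$ in the original rooting. With $j^*:=\lceil(\theta/c)\sqrt{n}\rceil+1$, set $e^*:=\{w_{j^*-1},w_{j^*}\}$. A direct computation gives $|e^*|_v=D-j^*+1\gtrsim\sqrt{n}$. The subtree $T^v_{e^*}$ is the $o_n$-side of the cut, consisting of $\{w_0,\dots,w_{j^*-1}\}$ together with every side branch attached there; in particular it contains every off-path vertex of depth at most $j^*-1$, since any such vertex must branch off $P$ at some $w_j$ with $j\leq j^*-1$. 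Of the $\geq n\delta+1$ distinct shallow vertices from (i), at most $j^*=O(\sqrt{n})$ can lie on $P$, so at least $n\delta/2$ of them sit in $T^v_{e^*}$, yielding $|T^v_{e^*}|\geq n\delta/2$. Multiplying and invoking Proposition~\ref{pro:LowerBoundRelaxationTime} with center-of-mass constant $\min(\delta,2\theta)$ gives $t^n_{\textup{rel}}\geq c_\varepsilon n^{3/2}$ for a suitable $c_\varepsilon>0$.

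The main technical obstacle is the center-of-mass step: one must convert the contour bound from (i), a statement about \emph{times}, into a bound on distinct tree \emph{vertices}, using the forward/backward edge-traversal balance of the depth-first walk. Once $v$ is correctly located, the remaining geometric estimates for $e^*$ are elementary from the tree structure.
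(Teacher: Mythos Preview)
Your argument is correct and rests on the same two ingredients as the paper—Lemma~\ref{lem:ExcursionArea} for the Brownian events and Proposition~\ref{pro:LowerBoundRelaxationTime} for the relaxation-time lower bound—but your geometric construction is the mirror image of the paper's. The paper re-roots at a \emph{shallow} vertex $o_\ast:=s(i_\ast)$ with $i_\ast=\max\{i\leq n/2:|s(i)|\leq c^{-1}\theta\sqrt{n}\}$, argues that $o_\ast$ is a $\delta$-center of mass (the contour over $[n/2,3n/2]$ supplies one side, the shallow prefix the other), and then uses the deep plateau from $B_2$ to exhibit a vertex at distance $\gtrsim\sqrt{n}$ from $o_\ast$ carrying a subtree of size $\gtrsim n$. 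You instead root at the \emph{deep} vertex $v$ (the LCA of the plateau) and place the heavy subtree $T^v_{e^\ast}$ on the $o_n$-side, fed by the shallow prefix. Both routes extract the same product $|e|\,|T_e|\asymp n^{3/2}$ from the same pair of excursion events; neither is more elementary, and the Skorokhod coupling you invoke is a legitimate substitute for the paper's direct weak-convergence step (a Fatou argument turns the $\omega$-dependent threshold into a uniform one). One small slip to fix: the event $B_1$ bounds the prefix by $\theta/2$, not $\theta$, so your discrete shallow bound should read $|s_n(i)|\leq(\theta/(2c))\sqrt{n}(1+o(1))$; with your written constant $\theta/c$ the comparison against $j^\ast-1=\lceil(\theta/c)\sqrt{n}\rceil$ is too tight, but the correct constant (or a slightly larger choice of $j^\ast$) repairs this immediately.
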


\begin{proof} Recall the contour function $s$, respectively $|s|$. Fix some $\varepsilon>0$ and let $\theta,\delta>0$ be the constants from Lemma \ref{lem:ExcursionArea}. We start the proof by first choosing a new root $o_{\ast}$ of $G_n$ as follows: Recall the constant $c>0$ from the convergence of $(G_n)$ to the CRT. We set
\begin{equation*}
o_{\ast}:= s(i_{\ast}) \qquad \text{ with } \qquad i_{\ast} = \max\left\{ i \leq \frac{1}{2}n \colon |s(i)| \leq c^{-1}\theta \sqrt{n} \right\} \ .
\end{equation*}
We claim that whenever the event $B_1^n$ given by
\begin{equation*}
B^n_1 := \left\lbrace |s(i)|> c^{-1}\theta  \sqrt{n} \text{ for all } i \in \left[\frac{1}{2} n, \frac{3}{2} n\right]\right\rbrace \cap   \left\lbrace \max_{j \leq  2\delta n} |s(j)|< c^{-1}\frac{\theta}{2}  \sqrt{n} \right\rbrace
\end{equation*} occurs then $o_{\ast}$ is a $\delta$-center of mass. To see this, observe that the subtree $T=T_{o_{\ast}}$, i.e.\ the largest subtree containing the new root $o_{\ast}$ and only sites of distance at least $|o_{\ast}|$ from the old root $o$, must have at least $n/2$ sites. This is due to the fact that the contour traverses, between times $n/2$ and $3n/2$, only edges which belong to $T$ and each edge is visited at most twice. To conclude that $o_{\ast}$ must be a $\delta$-center of mass, note that the tree $\tilde{T}$, induced by the vertices $V \setminus V(T) \cup \{o_{\ast}\}$, contains at least $\delta n$ many sites. This follows from the observation that all edges traversed by the contour until time $2\delta n$ belong to $\tilde{T}$ provided that $B_1^n$ occurs. Thus, $T$ and $\tilde{T}$ satisfy \eqref{def:CenterOfMass} and this proves the claim.\\

Moreover, suppose that in addition the event $B^n_2$ given by 
\begin{equation*}
B^n_2 := \left\lbrace \exists  i \in \left[\left(\frac{1}{2}+2\theta\right) n, \left(\frac{3}{2}-2\theta\right) n\right] \colon  \min_{ j\in [i-2\theta n,i+2 \theta n]} |s(j)| \geq 2c^{-1}\theta \sqrt{n} \right\rbrace 
\end{equation*} occurs. We claim that in this case for $i$ from the event $B_2^n$, the vertex $v$ given by
\begin{equation*}
v:= s(j_{\ast}) \qquad \text{ with } \qquad j_{\ast} = \max\left\{ j < i-2\theta n \colon |s(j)| < 2c^{-1}\theta \sqrt{n} \right\} 
\end{equation*} has at least distance $c^{-1}\theta \sqrt{n}$ from $o_{\ast}$ and it holds that $|T_v| \geq 2\theta n$. Again, this claim follows from the definition of the contour $s$ and the events $B_1^n$ and $B_2^n$ since all edges visited by the contour during $[i-2\theta n,i+2\theta n]$ must belong to $T_v$. \\

Hence, whenever the events $B_1^n$ and $B_2^n$ both occur, Proposition \ref{pro:LowerBoundRelaxationTime} gives a lower bound of $2\delta c^{-1}\theta^2 n^{3/2}$ for the relaxation time. Notice that the events $B_1$ and $B_2$ for the Brownian excursion correspond to the events  $B_1^n$ and $B_2^n$. More precisely, we have that
\begin{align*}
P\left(|s(i)|>c^{-1}\theta \sqrt{n} \text{ for all } i \in \left[ \frac{1}{2}n,\frac{3}{2}n\right] \right) &= P\left(\tilde{s}_n(y)>\theta \text{ for all } y \in \left[ \frac{1}{4},\frac{3}{4}\right] \right) 
\end{align*}
converges to $\P\left(e(t_{\min})\geq \theta\right)$ by Lemma \ref{lem:AldousScaling}. A similar statement applies for all other events in the definitions of $B_1^n$ and $B_2^n$. Thus, for all $\varepsilon>0$, Lemma \ref{lem:AldousScaling} yields that there exists an $N=N(\varepsilon)$ such that for all $n\geq N$,
\begin{equation*}
P(B_1^n \cap B_2^n) \geq \P(B_1 \cap B_2) - \frac{\varepsilon}{2}\, .
\end{equation*}
We conclude with Lemma \ref{lem:ExcursionArea}.
\end{proof}


\begin{proof}[Proof of Proposition \ref{pro:NoCutoffGWT2}] Recall from Lemma \ref{lem:ProductCriterion} that a necessary assumption for the simple random walk to satisfy cutoff is that the product criterion \eqref{eq:ProductCriterionIntro} holds. 
In other words, it is enough to show that
\begin{equation}\label{theliminf}
Z_n : = \frac{t^{n}_{\textup{mix}}\left( \frac{1}{4}\right)}{t^n_{\textup{rel}}}  \ \text{ satisfies } \ 
\text P\left( \liminf_{n \rightarrow \infty} Z_n < \infty\right) = 1\, .
\end{equation}
Note that Lemma \ref{lem:MixingTimeGWT} and Lemma \ref{lem:RelaxationTimeGWT} imply (by choosing $K_\varepsilon = \frac{C_{\varepsilon}}{c_{\varepsilon}}$) that 
\begin{equation}\label{is finite}
\forall \varepsilon > 0,  \ \exists K_\varepsilon < \infty \text{ such that }
P\left(Z_n  \leq K_\varepsilon\right) \geq 1-2 \varepsilon\, .
\end{equation}
The following general argument shows that \eqref{is finite} implies \eqref{theliminf}.
Let $\tilde{C} > 0$ and $\tilde{c} \in (0,\tilde{C})$. Then for every $n \in \N$, we have that
\begin{align}\label{eq:Formerindpendence}
P\left(\liminf_{n \rightarrow \infty} Z_n \geq \tilde{C} \right) &\leq P\left( \exists N_0=N_0(\omega) < \infty \colon Z_m \geq \tilde{C} - \tilde{c} \text{ for all } m \geq N_0 \right) \nonumber \\
&\leq P\left(  N_0 \geq n \right) + P\left( Z_n \geq \tilde{C} - \tilde{c}  \right)  \ .
\end{align} 
By choosing $n$ and $\tilde{C} $ large enough, due to \eqref{is finite}, both terms on the right-hand side of
\eqref{eq:Formerindpendence} are $\leq 2\varepsilon$. Since $\varepsilon$ was arbitrary, we conclude that \eqref{theliminf} holds.
\end{proof}

%
%
%

\section{Open questions}

In order to exclude cutoff for the simple random walk on spherically symmetric trees, we assumed that the maximum degree is bounded.

\begin{question} Can the assumption in Proposition \ref{pro:NoCutoffSpherically} on having a bounded maximum degree be relaxed? 
\end{question}

In Section \ref{sec:CutoffAtypical}, we showed that we do not see cutoff for Galton--Watson trees, which are typically \textit{short} and \textit{fat}, see also \cite{AB:ShortAndFat}.

\begin{question} Does a family of trees exist which is \textit{short} and \textit{fat} in the sense of \cite{AB:ShortAndFat} for which the simple random walk exhibits cutoff?
\end{question}

Throughout this article, we used at several points the result of Basu, Hermon and Peres that cutoff occurs for the simple random walk on a family of trees if and only if \eqref{eq:ProductCriterionIntro} is satisfied.

\begin{question} For which families of graphs is the product criterion in \eqref{eq:ProductCriterionIntro} a necessary and sufficient condition such that the simple random walk exhibits cutoff?
\end{question}

\bibliographystyle{plain}

\begin{footnotesize}
\bibliography{Spectrum}
\end{footnotesize}

\textbf{Acknowledgment} We thank Noam Berger and Jonathan Hermon for helpful discussions.
The third author acknowledges the TopMath program and the Studien\-stiftung des deutschen Volkes for financial support.

 \appendix
 
 \section{Proof of the existence of a center of mass} \label{sec:CenterOfMass}
 
 
\begin{proof}[Proof of Proposition \ref{pro:CenterOfMass}] Note that each tree must have a vertex separator $x\in V(G)$, that means that when $x$ is removed from the tree, all of the remaining connected components contain at most $|V(G)|/2$ many sites. This is a well-known fact from graph theory. We will now show that each vertex separator of a tree must also be a $\frac{1}{3}$-center of mass in the above sense. \\

For a given vertex separator $x$, let $S_1,S_2, \dots, S_d$ for some $d\geq 2$ denote the connected components of the tree when  removing $x$. Note that when $d=2$, we take $S_1 \cup \{ x\}$ and $S_2 \cup \{ x\}$ as vertex sets for the trees $T$ and $\tilde{T}$. When $d=3$, associate  $T$ with the largest component $S_i$, which must contain at least $(|V(G)| -1)/3$, but at most $|V(G)|/2$ many sites. Combine the remaining two components for $\tilde{T}$. \\

For $d\geq 4$, we describe now a procedure to reduce the problem of finding the trees $T$ and $\tilde{T}$ for a vertex separator $x$ with $d$ components $(S_i)$, to a problem of finding the trees $T$ and $\tilde{T}$ for $d-1$ components $(\tilde{S}_i)$ which have all size at most $|V(G)|/2$. Hence, solving the problem of finding the trees $T$ and $\tilde{T}$ for $x$ recursively will allow us to conclude the proof. Let $S^{\prime}$ and $S^{\prime\prime}$ be the smallest and second smallest component in $(S_i)$, respectively. By the pigeonhole principle, we have that
\begin{equation}\label{eq:SeparationDecomposition}
 |S^{\prime}| + |S^{\prime\prime}| \leq \left(\frac{1}{4}+\frac{1}{3}\right) (|V(G)|-1) \ .
\end{equation} We distinguish two cases. If the left-hand side in \eqref{eq:SeparationDecomposition} is at least $(|V(G)|-1)/3$, then we associate $T$ with $S^{\prime} \cup S^{\prime\prime}$ and $\tilde{T}$ with its complement, and we are done. If the left-hand side in \eqref{eq:SeparationDecomposition} is at most $(|V(G)|-1)/3$, then remove $S^{\prime}$ and $S^{\prime\prime}$ from $(S_i)$ and replace it by $S^{\prime} \cup S^{\prime\prime}$ to obtain $(\tilde{S}_i)$.
\end{proof}

\end{document}